\newtheorem{Theorem}{Theorem}
\newtheorem{Lemma}{Lemma}
\newtheorem{Question}{Question}
\newtheorem{Claim}{Claim}
\newtheorem{Proposition}{Proposition}
\newtheorem{Conjecture}{Conjecture}
 \title{On orientations maximizing total arc-connectivity}
\author{Florian H\"orsch\\ CISPA Saarbrücken, Germany}
\begin{document}

\maketitle

 \begin{abstract}
For a given digraph $D$ and distinct $u,v \in V(D)$, we denote by $\lambda_D(u,v)$ the local arc-connectivity from $u$ to $v$. Further, we define the total arc-connectivity $tac(D)$ of $D$ to be $\sum_{\{u,v\}\subseteq V(D)}(\lambda_D(u,v)+\lambda_D(v,u))$. We show that, given a graph $G$ and an integer $k$, it is NP-complete to decide whether $G$ has an orientation $\vec{G}$ satisfying $tac(\vec{G})\geq k$. This answers a question of Pekec. On the positive side, we show that the corresponding maximization problem admits a $\frac{2}{3}$-approximation algorithm.
\end{abstract}

\section*{Keywords} 

\begin{itemize}
\item complexity,
\item graph orientation,
\item graph connectivity,
\item approximation.
\end{itemize}
\section{Introduction}
This article is concerned with finding orientations of graphs that satisfy certain connectivity requirements. Any undefined notation can be found in Section \ref{preli}.

Graph orientations and connectivity play an important role in discrete optimization. An important question is which properties of a given undirected graph guarantee the existence of an orientation satisfying a certain prescribed connectivity property. The most fundamental result in this field is the following one due to Robbins \cite{robb}.

\begin{Theorem}\label{rob}
A graph has a strongly connected orientation if and only if it is 2-edge-connected.
\end{Theorem}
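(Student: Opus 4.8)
The plan is to prove both implications; necessity is routine and sufficiency is the substance.

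For necessity, suppose $\vec{G}$ is a strongly connected orientation of $G$. Then the underlying graph $G$ is connected, and I claim it has no cut edge. Indeed, if $e = uv$ is oriented from $u$ to $v$ in $\vec{G}$, strong connectivity yields a \emph{simple} directed path $Q$ from $v$ to $u$ in $\vec{G}$; being simple, $Q$ cannot use the arc from $u$ to $v$ (that would revisit $v$), so the underlying undirected path of $Q$ shows that $u$ and $v$ lie in the same component of $G - e$. Hence $G - e$ is connected for every edge $e$, i.e. $G$ is $2$-edge-connected.

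For sufficiency, I would invoke the classical ear-decomposition characterisation of $2$-edge-connected graphs: such a $G$ can be written as $G = C_0 \cup P_1 \cup \dots \cup P_t$, where $C_0$ is a cycle and, for each $i$, $P_i$ is a path or a cycle meeting $G_{i-1} := C_0 \cup P_1 \cup \dots \cup P_{i-1}$ exactly in its endpoint(s). Orient $C_0$ as a directed cycle, and orient each $P_i$ as a directed path from one of its endpoints to the other (choosing the unique attachment vertex first when $P_i$ is a cycle). I claim the resulting orientation is strongly connected, and I would prove by induction on $i$ that each $\vec{G}_i$ is strongly connected. The base case is a directed cycle. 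For the inductive step, write $P_i$ as a directed path from $u$ to $v$ with $u, v \in V(\vec{G}_{i-1})$: an internal vertex $w$ of $P_i$ reaches $v$ along $P_i$ and then, by the induction hypothesis, every vertex of $\vec{G}_{i-1}$ — in particular $u$, and hence also every internal vertex of $P_i$; symmetrically, every vertex reaches $w$ by first reaching $u$ and then following $P_i$. Thus $\vec{G}_i$ is strongly connected, and for $i = t$ this is the desired orientation of $G$.

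The one non-elementary ingredient is the existence of the ear decomposition. This is standard and could simply be cited, or established directly by starting from an arbitrary cycle of $G$ and repeatedly attaching an ear through an already-reached vertex, using that $G$ has no cut edge to guarantee that such an ear exists as long as some edge is still unused. A fully self-contained alternative bypasses ear decompositions altogether: fix a DFS tree rooted at an arbitrary vertex $r$, orient every tree edge away from $r$ and every back edge towards $r$; then $r$ reaches everything down the tree, while the absence of cut edges forces the subtree hanging below any vertex to send a back edge to a proper ancestor, so every vertex climbs back to $r$. In both approaches the crux — and the only place $2$-edge-connectedness is genuinely used — is verifying that every vertex can return to a fixed base vertex; I expect this to be the main obstacle.
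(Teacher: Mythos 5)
The paper states Robbins' theorem as Theorem~\ref{rob} and simply cites \cite{robb}; it supplies no proof of its own, so there is nothing in the text to compare your argument against. Your proof is nevertheless correct and standard. The necessity direction is fine: a simple directed path from $v$ to $u$ cannot reuse the arc $uv$, so the underlying graph minus $e$ is still connected, and $G$ is bridgeless. For sufficiency, your induction over an ear decomposition (with closed ears allowed, which is the right notion for $2$-edge-connectedness as opposed to $2$-connectedness) is exactly Robbins' argument in modern dress: the base directed cycle is strong, and each newly attached ear, oriented as a one-way path or cycle, preserves strong connectivity because both of its attachment points already lie in a strongly connected subdigraph. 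The DFS alternative you sketch is also sound; the decisive observation there --- that bridgelessness forces every DFS subtree to emit a back edge to a proper ancestor --- is precisely where $2$-edge-connectedness enters, as you note. Either route would serve as a self-contained proof of Theorem~\ref{rob}, which the paper itself does not attempt.
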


The numerous generalizations of Theorem \ref{rob} can roughly be divided into two categories: those that require the orientation to satisfy a vertex-connectivity property and those that require the orientation to satisfy an arc-connectivity property.

For vertex-connectivity, the following result was proven by Thomassen in \cite{CT}, improving on earlier results by Berg and Jord\'an \cite{AlexTibor} and Cheriyan, Durand de Gevigney and Szigeti \cite{cds}.

\begin{Theorem}\label{2vx}
A graph $G$ has a 2-vertex-connected orientation if and only if $G$ is 4-edge-connected and $G-v$ is 2-edge-connected for all $v \in V(G)$.
\end{Theorem}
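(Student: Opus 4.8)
For the ``only if'' direction the plan is a routine cut argument. If $\vec{G}$ is a $2$-vertex-connected orientation of $G$, then $\vec{G}-v$ is a strongly connected orientation of $G-v$ for every $v\in V(G)$, so Theorem~\ref{rob}, applied to each $G-v$ as well as to $G$ itself, already yields that $G$ and every $G-v$ are $2$-edge-connected. To get $4$-edge-connectivity, suppose some edge cut $\delta_G(X)$ has size at most $3$ and choose $X$ with $\emptyset\neq X\subsetneq V(G)$ inclusion-wise minimal, so that $G[X]$ is connected. Let $a$ and $b$ count the arcs of $\vec{G}$ leaving and entering $X$; strong connectivity forces $a,b\ge 1$, hence $\min\{a,b\}=1$, and after possibly swapping $X$ with its complement there is a unique arc $(p,q)$ from $X$ to $V(G)\setminus X$. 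If both sides contain at least two vertices, every directed path from $X$ to $V(G)\setminus X$ uses $(p,q)$ and thus passes through $q$; taking endpoints in $X\setminus\{p\}$ and $(V(G)\setminus X)\setminus\{q\}$ exhibits a pair not joined by a dipath in $\vec{G}-q$, contradicting $2$-vertex-connectivity. If one side is a single vertex $p$, then $d_G(p)\le 3$ forces $p$ to have in-degree or out-degree $1$ in $\vec{G}$, and deleting the corresponding neighbour of $p$ again destroys strong connectivity. This settles the necessity of the two conditions.

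\textbf{Sufficiency: strategy.} The substantial direction is ``if'', which I would prove by induction on $|E(G)|$, with small graphs such as $K_5$ (which has a $2$-vertex-connected orientation) forming the base. In the inductive step, fix a vertex $v$ of minimum degree; since $G$ is $4$-edge-connected, $d_G(v)\ge 4$. The tool is \emph{splitting off} at $v$: replacing a pair of edges $vx,vy$ by a new edge $xy$. When $d_G(v)=4$ the plan is to perform a complete splitting at $v$ (pairing all four edges into two new edges and deleting $v$) to obtain a smaller graph $G'$, and then to establish (i) that the pairing can be chosen so that $G'$ is again $4$-edge-connected with $G'-u$ $2$-edge-connected for every $u\in V(G')$, and (ii) that a $2$-vertex-connected orientation of $G'$, which exists by induction, lifts to one of $G$.

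\textbf{Sufficiency: the obstacle.} For (i), Mader's splitting-off theorem provides a complete splitting preserving all local edge-connectivities between vertices other than $v$, which already gives the $4$-edge-connectivity of $G'$; the real work is to show that one can simultaneously avoid destroying the $2$-edge-connectivity of any $G'-u$, which I expect to require a careful analysis of the small edge cuts of the graphs $G-u$ around $v$, in the spirit of the proofs of Mader and Lov\'asz. The odd case $d_G(v)=5$ is more delicate, since splitting off a single pair leaves $v$ of degree $3$ and breaks $4$-edge-connectivity, so one must compensate or find a reduction elsewhere. For (ii), given a $2$-vertex-connected orientation $\vec{G'}$, one keeps every orientation away from $v$ and, for each split pair $vx,vy$ whose new edge is oriented $x\to y$, orients it as $x\to v$ and $v\to y$, so that $v$ acquires in-degree and out-degree $2$; rerouting dipaths of $\vec{G'}$ through $v$ shows $\vec{G}-u$ is strongly connected for all $u\neq v$. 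The subtle point is $\vec{G}-v$: it is $\vec{G'}$ with the two split edges removed, so one must ensure these removals leave a strongly connected digraph, exploiting that $G-v$ is $2$-edge-connected together with the freedom in which of the two cyclic orders to impose on the four edges at $v$. Coupling an admissible splitting in (i) with a compatible lift in (ii) that keeps $v$ out of every minimum vertex cut is, I believe, the crux of the whole proof.

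\textbf{A fallback.} Should this bookkeeping become unmanageable, an alternative is to start from a $2$-arc-strongly-connected orientation of $G$ (it exists by Nash--Williams' orientation theorem since $G$ is $4$-edge-connected) and repair cut vertices one at a time: if $v$ is a cut vertex, so that $\vec{G}-v$ has a source strong component and a sink strong component, the $2$-edge-connectivity of $G-v$ should supply a directed walk whose reversal strictly decreases the potential $\sum_{w}\bigl|\{(x,y):\vec{G}-w\text{ has no }x\text{--}y\text{ dipath}\}\bigr|$, so that the process terminates at a $2$-vertex-connected orientation. Here the heart of the matter is proving that such a potential-decreasing reversal always exists.
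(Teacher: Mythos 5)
The paper does not prove Theorem~\ref{2vx} at all: it quotes the result and attributes it to Thomassen~\cite{CT}, so there is no ``paper's own proof'' to compare against. Evaluating your sketch on its own terms, the necessity direction is essentially correct and is indeed a routine cut argument: every $\vec G-v$ strongly connected yields $G-v$ $2$-edge-connected by Robbins, and a minimal cut of size at most $3$ gives a side with exactly one arc leaving it, so that deleting the head of that arc separates the digraph (and the one-vertex case is dispatched by degree parity). Some care is needed with very small graphs so that all the quantifiers in $\kappa_D(u,v)\ge 2$ are meaningful, but that is cosmetic.

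The sufficiency direction, however, is not a proof but a plan, and you say so yourself (``I expect to require a careful analysis,'' ``is, I believe, the crux of the whole proof,'' ``the heart of the matter is proving that \dots''). The genuine gap is precisely the part you flag: Mader's theorem preserves local edge-connectivities in $G$, but you additionally need a complete splitting at $v$ that preserves $2$-edge-connectivity of \emph{every} $G'-u$, and no argument is offered that such a splitting exists, nor how to handle $d_G(v)=5$, nor how to couple the admissible splitting with a lift of the orientation so that $\vec G-v$ stays strong. These are not bookkeeping details; they are where the difficulty of Thomassen's theorem lives, and indeed Thomassen's actual proof does not proceed this way (it is a long direct argument built around chain decompositions and a careful case analysis of minimum vertex cuts, not an induction via complete splitting at a minimum-degree vertex). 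The fallback via a $2$-arc-connected orientation and a decreasing potential is similarly only a hope: you would have to prove that a suitable reversing walk always exists and that the potential strictly drops, and neither claim is established. As it stands the sufficiency half of the theorem is unproved.
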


On the negative side, it was proven by Durand de Gevigney \cite{ODG} that there is no hope to generalize this result to higher vertex-connectivity. He proved the following result.

\begin{Theorem}
For any fixed $k \geq 3$, it is NP-complete to decide whether a given graph has a $k$-vertex-connected orientation.
\end{Theorem}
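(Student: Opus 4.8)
Membership in NP is the easy direction: a candidate orientation $\vec G$ is a polynomial-size certificate, and $k$-vertex-connectivity of $\vec G$ can be verified in polynomial time --- for instance by checking, for every ordered pair $(u,v)$ of vertices, that the maximum number of internally vertex-disjoint directed $u$-$v$ paths is at least $k$, which is a single max-flow computation on the vertex-split digraph. So all the work is in the NP-hardness.

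For the hardness I would design, for each fixed $k\ge 3$, a polynomial reduction from an NP-complete constraint-satisfaction problem; $3$-SAT, or a more symmetric variant such as monotone NAE-$3$-SAT, are the natural candidates. Given an instance $\phi$, one assembles a graph $G_\phi$ from three kinds of pieces. A \emph{frame}: a fixed gadget, parametrized by $k$, all of whose orientations are $k$-vertex-connected and which offers essentially no useful orientation freedom, so that it acts as a rigid backbone to which everything attaches. A \emph{selector} for each variable $x$: a small subgraph --- ideally a single edge both of whose endpoints are otherwise rigidly constrained by the frame --- whose only two viable orientations encode the truth value of $x$. And a \emph{clause gadget} for each clause $C$, attached to the selectors of the literals in $C$ and built so that deleting a suitable set of $k-1$ vertices disconnects $G_\phi$ \emph{precisely} when all literals of $C$ are oriented the ``wrong'' way, i.e.\ when $C$ is falsified. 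The basic lever for forcing selectors is the elementary observation that every vertex of a $k$-vertex-connected orientation has both in-degree and out-degree at least $k$; hence a vertex whose degree in $G_\phi$ equals $2k$ has exactly $k$ in-arcs and $k$ out-arcs in every valid orientation, and by letting its edge set interact with the frame appropriately one can cut the surviving options down to two.

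The step I expect to be the genuine obstacle is controlling \emph{global} connectivity in the presence of many independent local choices: the clause gadget must create a separator of size at most $k-1$ exactly in the falsifying case and never otherwise, while the many independently oriented selectors elsewhere must not conspire to produce some other small separator. The positive direction --- a satisfying assignment yields a $k$-vertex-connected orientation --- will in practice amount to exhibiting, for each ordered pair of vertices, an explicit family of $k$ internally disjoint dipaths; and proving that nothing smaller than $k-1$ ever separates $G_\phi$ in a ``good'' orientation will need either submodularity of the directed cut function or a careful case analysis of which vertices a minimum separator can use. This is the construction-heavy heart of the argument.

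It is worth working out the case $k=3$ first, both because the gadgets are then smallest and because it already contains the whole idea; the general case $k\ge 3$ is then obtained either by inflating the frame and the vertex degrees in the natural $k$-dependent fashion, or by appending $k-3$ apex-type vertices to the $k=3$ construction in a way that raises every relevant connectivity value by exactly $k-3$ without introducing new freedom, and then checking that the resulting graph has a $k$-vertex-connected orientation if and only if the original has a $3$-vertex-connected one.
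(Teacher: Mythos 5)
This theorem is stated in the paper only as background and is cited from Durand de Gevigney~\cite{ODG}; the paper contains no proof of it, so there is no in-paper argument to compare your attempt against.

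On its own terms, your proposal correctly handles NP-membership, and the high-level strategy you describe --- a gadget reduction from a CSP, using the fact that every vertex of a $k$-vertex-connected orientation has in-degree and out-degree at least $k$ to pin down binary choices, and clause gadgets whose falsification produces a separator of size $k-1$ --- is the right flavour; the argument in~\cite{ODG} is also a gadget reduction. But your plan stops exactly where the theorem begins. No frame, selector, or clause gadget is exhibited, so neither of the two central claims is even formulated, let alone proved: (i) that a satisfying assignment yields an orientation with \emph{no} separator of size less than $k$ anywhere, including across interactions between distant gadgets, and (ii) that every $k$-vertex-connected orientation forces a consistent, satisfying assignment. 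You flag these as the obstacles, which is honest, but flagging them does not discharge them; they \emph{are} the content of the theorem. The proposed lift from $k=3$ to general $k$ by appending $k-3$ apex vertices is also not automatic: adding apexes raises undirected connectivity, but in an oriented graph an apex can be oriented so that it fails to add $1$ to $\kappa_{\vec G}(u,v)$ uniformly over ordered pairs, so one must separately argue that the apex construction preserves the equivalence of instances. As written this is a plausible research programme, not a proof.
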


For arc-connectivity, the situation is much brighter. Indeed, the following natural generalization of Theorem \ref{rob} was proven by Nash-Williams in 1960 \cite{N60}.
\begin{Theorem}
Let $G$ be a graph and $k$ a positive integer. Then $G$ has a $k$-arc-connected orientation if and only if $G$ is $2k$-edge-connected.
\end{Theorem}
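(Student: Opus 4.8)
\emph{Plan.} The ``only if'' direction is immediate: if $\vec{G}$ is a $k$-arc-connected orientation of $G$, then by Menger's theorem every proper nonempty $X\subseteq V(G)$ has at least $k$ arcs leaving it and at least $k$ arcs entering it, so the number $d_G(X)$ of edges of $G$ across $X$ satisfies $d_G(X)=d^+_{\vec{G}}(X)+d^-_{\vec{G}}(X)\geq 2k$, i.e.\ $G$ is $2k$-edge-connected. For the converse I would first reduce to producing an orientation $\vec{G}$ with $d^-_{\vec{G}}(X)\geq k$ for every proper nonempty $X\subseteq V(G)$, since this already forces $d^+_{\vec{G}}(X)=d^-_{\vec{G}}(V(G)\setminus X)\geq k$ as well.

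The core case is the Eulerian one. If every vertex of $G$ has even degree, I would take a closed Euler trail (it exists, as $G$ is connected, being $2k$-edge-connected with $k\geq 1$) and orient each edge in the direction the trail traverses it. Then the trail crosses the boundary of any set $X$ alternately outwards and inwards, so $d^+_{\vec{G}}(X)=d^-_{\vec{G}}(X)=d_G(X)/2\geq k$. This both settles the Eulerian case and makes transparent why $2k$ is the right threshold.

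For the general case the plan is induction on $|E(G)|$, reducing to a smaller $2k$-edge-connected graph by \emph{splitting off} at a vertex $s$: since $G$ is $2k$-edge-connected it is bridgeless and $\lambda_G(x,y)\geq 2k$ for all $x,y$, so by the splitting-off theorems of Lov\'asz and Mader one can replace two edges $su,sv$ at $s$ by a single edge $uv$ so that the resulting graph $G'$ is again $2k$-edge-connected; when $d_G(s)$ is odd one keeps splitting until a single edge remains at $s$ and then deletes $s$, again obtaining a $2k$-edge-connected graph, now on fewer vertices. In all cases $|E(G')|<|E(G)|$, so by induction $G'$ has an orientation with $d^-\geq k$ on all sets. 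I would then \emph{lift} it: each split pair $\{su,sv\}$ that became an arc $(u,v)$ is replaced by the two arcs $(u,s),(s,v)$ (a possible leftover edge at $s$ oriented arbitrarily). A short cut count then finishes the proof: for $X$ with $s\notin X$ a split pair $\{su,sv\}\rightsquigarrow uv$ contributes to $\delta_{\vec{G}}(X)$ at least as much as the arc $uv$ did to $\delta_{\vec{G'}}(X)$ (with equality unless $u,v\in X$), so $d^-_{\vec{G}}(X)\geq d^-_{\vec{G'}}(X)\geq k$; the case $s\in X$ follows by complementation; and at $s$ itself each split pair yields one in-arc and one out-arc, so $d^+_{\vec{G}}(s),d^-_{\vec{G}}(s)\geq\lfloor d_G(s)/2\rfloor\geq k$ since $d_G(s)\geq 2k$.

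The hard part is the reduction, specifically the \emph{existence} of an admissible split: complete splitting preserving $2k$-edge-connectivity is guaranteed when $d_G(s)$ is even (Lov\'asz), but odd or small degrees — notably degree-$3$ configurations, where no admissible split need exist — require the full strength of Mader's theorem and likely ad hoc treatment of a few minimal graphs (for $k=1$ the statement is exactly Robbins' theorem, Theorem~\ref{rob}, and can be quoted directly). An alternative that avoids splitting-off is to derive the ``if'' direction from submodular-flow feasibility: start from an arbitrary orientation and repeatedly reverse an arc leaving a set $X$ with $d^-(X)<k$ — such an arc exists since then $d^+(X)\geq k+1$ — while controlling a suitable potential by an uncrossing argument; there the uncrossing is the crux.
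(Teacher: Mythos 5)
The paper does not prove this theorem; it is stated as background and attributed to Nash--Williams \cite{N60}. In the context of the paper it is in fact an immediate corollary of Theorem~\ref{nwfort} (Nash--Williams' strong orientation theorem), which is stated two results later: if $G$ is $2k$-edge-connected then any well-balanced orientation $\vec{G}$ satisfies $\min\{\lambda_{\vec{G}}(u,v),\lambda_{\vec{G}}(v,u)\}\geq\lfloor\tfrac12\lambda_G(u,v)\rfloor\geq k$ for every pair, hence is $k$-arc-connected. Your proposal instead aims for a self-contained elementary proof via Euler trails and splitting-off, which is a standard and legitimate alternative route; the ``only if'' direction, the Eulerian case, and the lift argument (replacing a split arc $uv$ by $u\to s\to v$ and checking in-degrees on sets containing or avoiding $s$) are all correct as written.

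The genuine gap is in the odd-degree reduction. Mader's splitting-off theorem gives an admissible pair at $s$ preserving local connectivities among non-$s$ vertices provided $d(s)\neq 3$ and no bridge is incident to $s$; if $d_G(s)$ is odd you never reach $d(s)=1$ by repeated splits, you get stuck at $d(s)=3$, where no admissible split need exist. So the step ``keep splitting until a single edge remains at $s$ and delete $s$'' is not justified by the theorems you invoke, and for $k\geq 2$ it is not enough to just quote Robbins. You acknowledge that ``ad hoc treatment of a few minimal graphs'' may be needed, but without saying what it is the induction does not close. Standard fixes are: (i) always split at a vertex of even degree if one exists and handle the all-odd-degree case by a separate parity device, or (ii) bypass splitting-off entirely and prove the sufficiency from the orientation/submodular-flow lemma (your sketched reversal argument), where the real work is the uncrossing of two deficient sets, or (iii) simply deduce the result from the well-balanced orientation theorem as the paper does. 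As it stands, the proposal identifies the right obstruction but leaves it unresolved.
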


Actually, in the same article, Nash-Williams proved the following much stronger result which takes into account local arc-connectivities.

\begin{Theorem}\label{nwfort}
Every graph has a well-balanced orientation.
\end{Theorem}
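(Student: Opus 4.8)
The relevant notion is that an orientation $\vec G$ of $G$ is \emph{well-balanced} if $\lambda_{\vec G}(u,v)\ge \lfloor \tfrac12\lambda_G(u,v)\rfloor$ for every ordered pair $(u,v)$ of distinct vertices. The plan is to prove the formally stronger assertion that $G$ has a \emph{best-balanced} orientation, i.e.\ one that is well-balanced and also satisfies $|d^+_{\vec G}(v)-d^-_{\vec G}(v)|\le 1$ for every $v\in V(G)$; this extra degree-balance is what can be carried through an induction, since it is exactly what is needed to glue local pieces together. The argument proceeds by induction on $|E(G)|$ via a splitting-off reduction at a vertex, and the case that drives everything is the Eulerian one.

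First I would dispose of the case where $G$ is \emph{Eulerian} (all degrees even). Then $E(G)$ decomposes into closed walks, so $G$ has an orientation $\vec G$ with $d^+_{\vec G}(v)=d^-_{\vec G}(v)$ for all $v$. Summing this identity over a set $X\subseteq V(G)$, and using that the arcs inside $X$ cancel, gives $d^+_{\vec G}(X)=d^-_{\vec G}(X)=\tfrac12 d_G(X)$ for \emph{every} $X$. Combining this with Menger's theorem applied in $\vec G$ and in $G$ yields $\lambda_{\vec G}(u,v)=\tfrac12\lambda_G(u,v)$ for every pair, so $\vec G$ is (best-)balanced; note this also records that $\lambda_G(u,v)$ is even here, consistent with the floor.

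For general $G$ I would pick a vertex $s$ of even degree and apply a Mader-type splitting-off theorem: provided $s$ is not incident to a cut-edge (a situation one can reduce away, by recursing on the two sides of such an edge), there is a \emph{complete splitting} at $s$, i.e.\ a pairing of the edges at $s$ into pairs $\{su_i,sv_i\}$ so that the graph $G'$ obtained by deleting all edges at $s$ and adding the edges $u_iv_i$ has $\lambda_{G'}(x,y)=\lambda_G(x,y)$ for all $x,y\in V(G)-s$. Since $G'$ has fewer edges, induction gives a best-balanced orientation $\vec{G'}$, and I would then \emph{un-split}: whenever $u_iv_i$ is oriented $(u_i,v_i)$ in $\vec{G'}$, replace it by the two arcs $(u_i,s)$ and $(s,v_i)$. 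This adds exactly one in-arc and one out-arc at $s$ per pair, so $d^+_{\vec G}(s)=d^-_{\vec G}(s)=\tfrac12 d_G(s)$, while leaving every other out-degree unchanged; moreover each directed $u_i$–$v_i$ edge becomes a directed path through $s$, so arc-disjoint path systems between vertices of $V(G)-s$ survive, giving $\lambda_{\vec G}(x,y)\ge \lambda_{\vec{G'}}(x,y)\ge\lfloor\tfrac12\lambda_G(x,y)\rfloor$ for all $x,y\neq s$. Vertices of small or odd degree (including the exceptional degree $3$, where Mader's lemma needs care) are handled by separate, essentially elementary, reductions.

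The genuine obstacle — and the reason this theorem is hard — is the connectivity \emph{to and from} the split-off vertex $s$: the splitting above only controls $\lambda(x,y)$ for $x,y\in V(G)-s$, so nothing so far guarantees $\lambda_{\vec G}(s,v)\ge\lfloor\tfrac12\lambda_G(s,v)\rfloor$ or the reverse bound. The core of the proof is therefore to upgrade to an \emph{admissible} complete splitting: one must show the pairing at $s$ can be chosen so that, after recursion and un-splitting, the arc-connectivities routed through $s$ also land above the required floors. I expect this to be where essentially all the difficulty lies, and to require a careful analysis of the ``dangerous'' near-tight $u$–$v$ cuts via submodularity of the cut function $d_G$, together with the parity observation that $d_G(X)\equiv |X\cap T|\pmod 2$, where $T$ is the set of odd-degree vertices — this parity is precisely what forces the bound $\lfloor\tfrac12\lambda_G(u,v)\rfloor$ rather than an exact halving.
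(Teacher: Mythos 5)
The paper does not prove Theorem~\ref{nwfort}; it is stated with a citation to Nash-Williams' 1960 paper. So I will assess your sketch on its own merits. Your Eulerian case is correct (orient along Euler circuits, deduce $d^+_{\vec G}(X)=\tfrac12 d_G(X)$ for all $X$, apply Menger), and your lifting argument showing $\lambda_{\vec G}(x,y)\ge\lambda_{\vec{G'}}(x,y)$ after un-splitting at $s$ is also fine for all pairs $x,y\in V(G)-s$, since each arc $u_iv_i$ of $\vec{G'}$ becomes a directed two-arc path through $s$ and these paths are pairwise arc-disjoint at $s$.

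However, the obstruction you then flag is not a loose end; it \emph{is} the theorem. Mader's splitting-off lemma gives a complete splitting preserving $\lambda_G(x,y)$ for all $x,y\neq s$ and says nothing about $\lambda_G(s,\cdot)$, and there is no strengthening of it that, after recursion and un-splitting, automatically delivers the floors at $s$ — if such a clean ``connectivity-through-$s$-preserving splitting'' lemma existed, Nash-Williams' theorem would be a three-line induction, and it is emphatically not. The constraint at $s$ is genuinely tight: whenever $\lambda_G(s,v)=d_G(s)$ you have $\lambda_{\vec G}(s,v)\le d^+_{\vec G}(s)=\tfrac12 d_G(s)=\lfloor\tfrac12\lambda_G(s,v)\rfloor$, so equality must hold for \emph{every} such $v$ simultaneously, which forces a global analysis of the near-tight $s$-cuts that cannot be delegated to the inductive hypothesis on $G'$. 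Your reduction therefore replaces the theorem by an unproved lemma that is at least as hard. You also wave away the all-odd-degree case (e.g.\ $K_4$, where no even-degree vertex exists to split off), which would need its own treatment. The actual proofs (Nash-Williams' original, and Frank's exposition) take a different route entirely: one constructs a \emph{feasible odd-vertex pairing}, a perfect matching $F$ on the set $T$ of odd-degree vertices satisfying $d_F(X)\le R(X)$ for a surplus function $R$ built from $d_G$ and the local edge-connectivities, makes $G\cup F$ Eulerian, orients along Euler circuits, and deletes the $F$-arcs; the existence of a feasible pairing is the hard lemma, proved via submodularity of $d_G$ together with the parity fact $d_G(X)\equiv |X\cap T|\pmod 2$ that you correctly anticipate but do not deploy. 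That construction has no counterpart in your splitting scheme, so the two arguments are not variants of one another — yours stops exactly where the real work begins.
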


Since then, attempts to strengthen Theorem \ref{nwfort} have yielded limited success. A collection of slight strengthenings of Theorem \ref{nwfort} was given by Kir\'aly and Szigeti \cite{ks}. On the other hand, it was proven by Bern\'ath et al. \cite{bikks} that it is NP-complete to decide if a given graph has a well-balanced orientation respecting some upper and lower bounds on the outdegrees of every vertex and whether a given graph has a well-balanced orientation minimizing a given weight function on the possible orientations of each edge. Further, Bern\'ath and Joret proved that it is NP-complete to decide if a given graph has a well-balanced orientation respecting a preorientation of some edges \cite{bj}. More recently, the author and Szigeti showed that it is NP-complete to decide if a given graph has a well-balanced orientation respecting only upper bounds on the outdgrees of every vertex \cite{hs}. This work will later play a crucial role in the present article.

In the search of an orientation of a graph whose connectivity properties are roughly speaking optimal, well-balanced orientations certainly play an important role as they have strong connectivity properties, but they also have some weaknesses. This is particularly well visible when considering trees. It is easy to see that every orientation of a tree is well-balanced and hence the fact that an orientation of a tree is well-balanced does not reveal any information about its connectivity properties.

In this article, we deal with another measure of connectivity of a digraph that attempts to overcome this issue. For a digraph $D$, we define its total vertex-connectivity {\it $tvc(D)$} to be $\sum_{\{u,v\}\subseteq V(D)}(\kappa_D(u,v)+\kappa_D(v,u))$ and  its total arc-connectivity {\it $tac(D)$} to be $\sum_{\{u,v\}\subseteq V(D)}(\lambda_D(u,v)+\lambda_D(v,u))$.

Our objective now is, given a graph $G$, to find an orientation $\vec{G}$ of $G$ that maximizes $tvc(\vec{G})$ or $tac(\vec{G})$. Observe that when $G$ is a tree, then $tvc(\vec{G})$ and $tac(\vec{G})$ coincide for any orientation $\vec{G}$ of $G$. Actually, the case of trees has been handled succesfully by Hakimi, Schmeichel and Young \cite{hsy} and independently Henning and Oellermann \cite{ho}. The following is an immediate corollary of a more technical result that will prove useful later on.
\begin{Theorem}
Given a tree $T$, we can find in polynomial time an orientation $\vec{T}$ of $T$ that maximizes $tac(\vec{T})$.
\end{Theorem}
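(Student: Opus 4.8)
The plan is to turn the maximization of $tac$ over orientations of a tree $T$ into a dynamic program over a rooted version of $T$. Fix an arbitrary root $r$. Since between any two vertices of a tree there is a unique path, we have $\lambda_{\vec T}(u,v)\in\{0,1\}$ for every orientation $\vec T$ and every ordered pair $(u,v)$ of distinct vertices, with $\lambda_{\vec T}(u,v)=1$ exactly when the unique $u$--$v$ path of $T$ is oriented as a directed path from $u$ to $v$ in $\vec T$. Hence $tac(\vec T)$ is simply the number of ordered pairs $(u,v)$ joined by a directed path in $\vec T$. For a vertex $v$ and an orientation of the subtree $T_v$ rooted at $v$, let $f(v)$ be the number of $u\in V(T_v)\setminus\{v\}$ admitting a directed path to $v$ inside $T_v$, and $g(v)$ the number of $u\in V(T_v)\setminus\{v\}$ reachable from $v$ inside $T_v$. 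Classifying each directed path of $\vec T$ by its vertex closest to $r$ (its \emph{apex}), I would establish the identity
\[
 tac(\vec T)=\sum_{v\in V(T)}\bigl((1+f(v))(1+g(v))-1\bigr),
\]
the three summands $f(v)$, $g(v)$, $f(v)g(v)$ of $(1+f(v))(1+g(v))-1$ accounting, respectively, for the directed paths ending at $v$, starting at $v$, and having $v$ as an internal (apex) vertex; for the last type, the path enters $v$ from one child-subtree along an up-directed path and leaves into another child-subtree along a down-directed path, and since the ``incoming'' and ``outgoing'' child-subtrees of $v$ are disjoint, the number of such paths is precisely $f(v)g(v)$.

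The crucial structural point is that $f(v)$, $g(v)$ and the partial sum $\sum_{x\in V(T_v)}\bigl((1+f(x))(1+g(x))-1\bigr)$ depend only on the orientation of the edges inside $T_v$, whereas the orientation of the edge from $v$ to its parent influences the ancestors of $v$ solely through the pair $(f(v),g(v))$. This licenses the following bottom-up dynamic program. For each vertex $v$ and each $i,j\in\{0,\dots,n-1\}$ let $M_v(i,j)$ be the maximum of $\sum_{x\in V(T_v)}\bigl((1+f(x))(1+g(x))-1\bigr)$ over orientations of $T_v$ with $f(v)=i$ and $g(v)=j$ (and $-\infty$ if there is none). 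For a leaf, $M_v(0,0)=0$ and all other entries are $-\infty$. For an internal vertex $v$ with children $c_1,\dots,c_d$, process the children one by one: a child $c_t$ is either oriented towards $v$ --- then it adds $1+f(c_t)$ to $f(v)$ and one may draw on $\max_{j}M_{c_t}(f(c_t),j)$ --- or away from $v$ --- then it adds $1+g(c_t)$ to $g(v)$ and one draws on $\max_i M_{c_t}(i,g(c_t))$; maintaining a table of the best total subtree contribution of the children handled so far against the accumulated values of $f(v)$ and $g(v)$, one finally sets $M_v(i,j)$ to the best such accumulated contribution plus $(1+i)(1+j)-1$. By the identity above, $\max_{i,j}M_r(i,j)$ is the maximum of $tac$ over all orientations of $T$, and an optimal orientation is obtained by standard backtracking.

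Each table has $O(n^2)$ entries, and merging a child into the running table of a vertex costs $O(n^3)$, so the algorithm runs in $O(n^4)$ time, which is polynomial. The one step that genuinely needs care is the apex decomposition identity: one must verify that every directed path of $\vec T$ is counted exactly once and that the internal-apex directed paths through a vertex $w$ are in bijection with the product of the set of pairs counted by $f(w)$ and the set counted by $g(w)$, which is where the disjointness of the in- and out-oriented child-subtrees of $w$ is used. Everything else is a routine, if slightly tedious, dynamic program, and the vertex-by-vertex accounting it provides is exactly the kind of refinement recorded by the more technical statement mentioned above.
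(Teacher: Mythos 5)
Your argument is correct, and it differs from the paper's treatment in that the paper gives no proof of this theorem at all: it states it as an immediate corollary of Proposition~\ref{reach} (Hakimi, Schmeichel and Young's result on orienting a graph to maximize $\sum_{\{u,v\}}\min\{\lambda_{\vec G}(u,v),1\}+\min\{\lambda_{\vec G}(v,u),1\}$), observing that for a tree every local arc-connectivity is in $\{0,1\}$, so that objective coincides with $tac$. Your apex decomposition $tac(\vec T)=\sum_v\bigl(f(v)+g(v)+f(v)g(v)\bigr)$ is sound: the incoming and outgoing child-subtrees at an internal apex $w$ are necessarily distinct (a single edge from $w$ to a child cannot carry directed paths both into and out of $w$), so concatenation gives a bijection between internal-apex directed paths through $w$ and the $f(w)g(w)$ pairs, while the start-apex and end-apex cases are counted by $g(w)$ and $f(w)$. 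Moreover $f(v)+g(v)\le |V(T_v)|-1$ for a tree, so $(f(v),g(v))$ is a legitimate $O(n^2)$-state interface for the subtree, and the child-by-child knapsack-style merge you describe realizes the identity in $O(n^4)$ time with the orientation recovered by backtracking. The trade-off: the paper's citation of the reachability result also covers general graphs (where the quotient by $2$-edge-connected components is a forest), while your direct proof is more elementary and yields an explicit tree algorithm, but does not by itself extend beyond trees.
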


Clearly, for a general digraph $D$, we may have $tvc(D)\neq tac(D)$. In \cite{duraj}, Duraj suggested that it may be NP-complete to find an orientation $\vec{G}$ of a given graph $G$ that maximizes $tvc(\vec{G})$. We think that this can be proven by slightly adapting the reduction of Durand de Gevigney in \cite{ODG}. In \cite{ho}, Henning and Oellermann, studying a closely related parameter, provided some extremal results on $tvc(D)$ for digraphs $D$ with a fixed number of vertices and edges and considered the problem for complete graphs and trees. More recently, Casablanca et al. \cite{cdgmo} provided new bounds for the maximum value of $tvc(\vec{G})$ for the case that $G$ is contained in some restricted class of graphs.

Comparatively little work has been done on maximizing $tac(\vec{G})$ over all orientations of a given graph $G$ and this is the main purpose of the present article. In \cite{ho}, Henning and Oellermann used Theorem \ref{nwfort} to show that every 2-edge-connected graph has an orientation $\vec{G}$ that satisfies $tac(\vec{G})\geq \frac{2}{3}\sum_{\{u,v\}\subseteq V(G)}\lambda_G(u,v)$. According to Bang-Jensen and Gutin \cite{bg}, the question whether  $tac(\vec{G})$ can be maximized efficiently over all orientations $\vec{G}$ of a given graph $G$ was raised by Pekec in 1997. Our first contribution is a negative answer to this question.

\begin{Theorem}\label{mainhard}
Given a graph $G$ and a positive integer $k$, it is NP-complete to decide if $G$ has an orientation $\vec{G}$ that satisfies $tac(\vec{G})\geq k$.
\end{Theorem}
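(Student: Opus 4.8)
The plan is to prove Theorem~\ref{mainhard} by reduction from a problem shown to be NP-complete by the author and Szigeti in \cite{hs}: given a graph $G$ and a function $f : V(G) \to \mathbb{Z}_{\geq 0}$, decide whether $G$ has a well-balanced orientation $\vec{G}$ with $d^+_{\vec{G}}(v) \leq f(v)$ for every $v \in V(G)$. Membership of our problem in NP is immediate, since for a given orientation all values $\lambda_{\vec{G}}(u,v)$, and hence $tac(\vec{G})$, can be computed in polynomial time by max-flow computations. After standard preprocessing we may assume $f(v) \leq d_G(v)$ for all $v$; writing $a_v := d_G(v) - f(v) \geq 0$, the constraint at $v$ reads $d^-_{\vec{G}}(v) \geq a_v$.

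From an instance $(G,f)$ I would build, in polynomial time, a graph $H$ consisting of a copy of $G$ together with pendant gadgets of polynomial size, and fix an integer $k$. Two features need to be engineered. First, a per-vertex gadget $Q_v$, whose size is governed by $f(v)$, whose role is to turn the degree bound at $v$ into a controlled increment of $tac$: it should be designed so that, in any $tac$-maximizing orientation $\vec{H}$, the total contribution of the pairs meeting $Q_v$ is maximal only when $d^-_{\vec{G}}(v) \geq a_v$. Here the key structural facts are that a pendant attachment creates no new paths, so that $\lambda_{\vec{H}}(u,v) = \lambda_{\vec{G}}(u,v)$ for $u,v \in V(G)$ and the pairs inside $G$ contribute exactly $tac(\vec{G})$, and that by Menger's theorem the contribution of any pair meeting a gadget is a $\lambda$-value truncated at a gadget parameter, hence computable and monotone in the intended local quantity. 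Second, extracting the maximum value should also force the restriction of $\vec{H}$ to $G$ to be well balanced; I would implement this with further small pendant structures attached inside $G$, of carefully chosen multiplicities, so that they can all be simultaneously ``saturated'' only when $\lambda_{\vec{G}}(u,v) \geq \lfloor \lambda_G(u,v)/2 \rfloor$ for every ordered pair $(u,v)$.

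One then lets $k$ be the exact value of $tac(\vec{H})$ attained by a ``canonical'' orientation built from a well-balanced, degree-feasible $\vec{G}$ by orienting every gadget in its intended way. For the forward direction, given such a $\vec{G}$, extend it canonically and compute all $\lambda_{\vec{H}}$-values directly to check that $tac(\vec{H}) = k$. For the reverse direction, given any $\vec{H}$ with $tac(\vec{H}) \geq k$, argue that every gadget must be in its canonical configuration and that the restriction of $\vec{H}$ to $G$ must be well balanced and respect every degree bound, since any deviation strictly decreases $tac(\vec{H})$ below $k$.

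I expect the main obstacle to be precisely this exactness. Because the statement is a threshold one with a tight $k$, one cannot get away with bounding $tac(\vec{H})$ from two sides, as is usual in orientation-connectivity arguments, but must identify $\max_{\vec{H}} tac(\vec{H})$ exactly and describe every orientation achieving it. Controlling the contributions of the low-connectivity (``tree-like'') pairs between the gadgets and $G$, and in particular ruling out orientations that offset a degree violation at one vertex by a surplus somewhere else, is the delicate point; it is what forces the gadgets to be large enough that their $tac$-accounting dominates and decouples vertex by vertex.
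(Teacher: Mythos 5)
Your high-level plan — reduce from the upper-bounded well-balanced orientation problem of \cite{hs}, attach amplifying pendant gadgets to blow up the $tac$-contribution of the relevant pairs, and set $k$ to the value attained by a canonical orientation — is the same skeleton as the paper's proof, and your observation that pendant attachments create no new paths between original vertices is indeed the reason the paper's tubes are attached at cut vertices. However, there is a substantial gap between your sketch and a workable proof, and the gap lies exactly where you flag ``the delicate point.''

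First, you propose to reduce directly from general UBWBO, with per-vertex gadgets $Q_v$ whose size is ``governed by $f(v)$'' so that in a $tac$-maximizing orientation the constraint $d^-_{\vec{G}}(v)\geq a_v$ becomes favorable. You do not say how such a gadget is built, and it is far from clear that one exists: $tac$ aggregates local connectivities over all pairs, and a local degree bound at $v$ does not translate into a monotone effect on the $\lambda$-values of pairs involving a pendant gadget at $v$ in any obvious way, because those $\lambda$-values are governed by $\min\{d^+_{\vec{G}}(v),\cdot\}$ and $\min\{d^-_{\vec{G}}(v),\cdot\}$, which pull in opposite directions as you shift edges in or out of $v$. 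The paper sidesteps this by \emph{not} trying to encode arbitrary degree bounds. It first restricts to instances (FSUBWBO) where the only nontrivial bounds are $\ell(v)=1$ at certain degree-$3$ vertices plus one bound at a dominant vertex $a$; it then removes the bound at $a$ entirely by replacing $a$ with a dedicated $(\alpha,\beta)$-gadget (Lemma \ref{gadexists}, Lemma \ref{01}), arriving at SSUBWBO where the \emph{only} bounds left are $\ell(v)=1$ at degree-$3$ vertices $v\in V_3$. At that point, the degree condition $d^+_{\vec{G}}(v)=1$ is not enforced by a per-vertex gadget at all: it emerges (Claim \ref{schluss}) as a \emph{consequence} of the well-balancedness already forced by the $n^{10}$ and $n^7$ scale terms in $k$, combined with the dedicated auxiliary vertex $a'$ of degree $3$ attached only to $a$. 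Your proposal has no analogue of either the $(\alpha,\beta)$-gadget or the vertex $a'$, and without reducing to a problem of SSUBWBO's restricted shape it is not clear how to make the accounting decouple.

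Second, your concern that one must ``identify $\max_{\vec{H}} tac(\vec{H})$ exactly and describe every orientation achieving it'' is not what the paper does, and trying to do that would be much harder. The paper instead exploits a separation of scales: tubes of lengths $n^5$ and $n^2$ make the contributions of pairs within tubes, pairs between tubes of ``important'' vertices, and all remaining pairs live on the orders $n^{10}$, $n^{7}$, and at most $n^{6}+\frac{3}{4}n^7$ respectively. Then $tac(\vec{H})\geq k$ already forces the mid-scale terms to be at their maximum (Claim \ref{connasymm}), and everything else follows. There is no need to characterize the optimum. So the difficulty you foresee is real if one tries your route, but the remedy is not a finer exactness argument — it is a restriction of the source problem plus a hierarchy of gadget sizes, neither of which appears in your proposal.
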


The proof of Theorem \ref{mainhard} is based on the construction in \cite{hs} and pretty involved.

On the positive side, we show that the problem of maximizing $tac(\vec{G})$ can be approximated efficiently.

\begin{Theorem}\label{mainapp}
The problem of maximizing $tac(\vec{G})$ over all orientations $\vec{G}$ of a given graph $G$ admits a $\frac{2}{3}$-approximation algorithm.
\end{Theorem}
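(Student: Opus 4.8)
\emph{Proof plan.} The plan is to split the pairs $\{u,v\}$ into those lying in a common $2$-edge-connected component of $G$ and those that do not, to treat the two kinds essentially independently, and in both cases to charge against the following elementary bound. For any orientation $\vec{G}$ of $G$ and distinct $u,v\in V(G)$, take a minimum $\{u,v\}$-cut $(S,V(G)\setminus S)$; its $\lambda_G(u,v)$ edges split, according to their orientation, into $a$ arcs leaving $S$ and $\lambda_G(u,v)-a$ arcs entering $S$, so by Menger's theorem $\lambda_{\vec{G}}(u,v)\le a$ and $\lambda_{\vec{G}}(v,u)\le\lambda_G(u,v)-a$, whence
\[
\lambda_{\vec{G}}(u,v)+\lambda_{\vec{G}}(v,u)\ \le\ \lambda_G(u,v).
\]

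Let $C_1,\dots,C_m$ be the $2$-edge-connected components of $G$ (including trivial one-vertex ones) and let $F$ be the forest obtained by contracting each $C_i$; the edges of $F$ are exactly the bridges of $G$. I would first establish two structural facts, both exploiting that a bridge is the unique link between the two sides it separates and hence cannot be used and returned through by an arc-disjoint path system: \emph{(i)} if $u,v\in C_i$ then $\lambda_{\vec{G}}(u,v)=\lambda_{\vec{C_i}}(u,v)$ for every orientation $\vec{G}$, where $\vec{C_i}$ is the restriction of $\vec{G}$ to $C_i$ (so, with the displayed bound applied inside $C_i$ and $\lambda_{C_i}(u,v)=\lambda_G(u,v)$, the contribution of within-component pairs to $tac(\vec{G})$ is at most $\sum_i\sum_{\{u,v\}\subseteq C_i}\lambda_{C_i}(u,v)$); and \emph{(ii)} if $u\in C_i$, $v\in C_j$ with $i\neq j$, then $\lambda_{\vec{G}}(u,v)+\lambda_{\vec{G}}(v,u)\le 1$, with equality precisely when the bridges along the unique $C_i$--$C_j$ path in $F$ are all oriented the same way along this path and every internal component on the path receives a strongly connected orientation.

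The algorithm is the one suggested by \emph{(i)} and \emph{(ii)}. For each non-trivial $C_i$, compute a well-balanced orientation: one exists by Theorem \ref{nwfort}, can be found in polynomial time, is strongly connected since $C_i$ is $2$-edge-connected, and satisfies $\lambda_{\vec{C_i}}(u,v)\ge\lfloor\lambda_{C_i}(u,v)/2\rfloor$ for all $u,v\in C_i$. For the bridges, give each node $C_i$ of $F$ weight $w(C_i):=|V(C_i)|$ and orient $F$ so as to maximize $\sum w(C_i)w(C_j)$ over pairs $\{C_i,C_j\}$ whose connecting path in $F$ is oriented monotonically; this is exactly the vertex-weighted tree-orientation problem, which (applied to each tree of $F$) is solved in polynomial time by the more technical result underlying the tree theorem quoted above.

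For the analysis, split $tac$ into its within- and cross-component parts. By \emph{(i)} and the displayed bound, the within part of our orientation equals $\sum_i\sum_{\{u,v\}\subseteq C_i}(\lambda_{\vec{C_i}}(u,v)+\lambda_{\vec{C_i}}(v,u))\ge\sum_i\sum_{\{u,v\}\subseteq C_i}2\lfloor\lambda_{C_i}(u,v)/2\rfloor\ge\frac23\sum_i\sum_{\{u,v\}\subseteq C_i}\lambda_{C_i}(u,v)$, using $2\lfloor k/2\rfloor\ge\frac23 k$ for integers $k\ge 2$ and $\lambda_{C_i}(u,v)\ge 2$ inside a $2$-edge-connected component; hence it is at least $\frac23$ of the within part of an optimal orientation. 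By \emph{(ii)}, since all our components are strongly connected and $F$ is oriented optimally, the cross part of our orientation equals the maximum of the weighted tree-orientation problem on $F$ and is therefore at least the cross part of any orientation. Writing the optimum as $A+B$ with $A$ its within part and $B$ its cross part, our value is at least $\frac23 A+B\ge\frac23(A+B)$, the claimed ratio. The genuine work lies in proving \emph{(i)} and \emph{(ii)} carefully --- ruling out that excursions across bridges ever raise a local arc-connectivity, and pinning the cross-component contribution to the induced bridge-orientation alone --- and I expect the main obstacle to be the precise invocation of the vertex-weighted form of the tree result, since the statement quoted above is only its unweighted corollary and one must check that it applies verbatim to $F$ with the weights $|V(C_i)|$.
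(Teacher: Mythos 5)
Your proposal follows essentially the same strategy as the paper: decompose $G$ into its $2$-edge-connected components, use a well-balanced orientation inside each component (giving at least $\frac{2}{3}$ of the within-component contribution via $2\lfloor k/2\rfloor \geq \frac{2}{3}k$ for $k\geq 2$), and orient across components so that the cross-component contribution is fully maximized; summing then yields the $\frac{2}{3}$ ratio. The structural facts you call \emph{(i)} and \emph{(ii)} are correct and correspond to the paper's partition of pairs into $\mathcal{P}_{\geq 2}$ and $\mathcal{P}_{\leq 1}$. The only real divergence is in handling the cross-component part: you propose to contract each component to a node of weight $|V(C_i)|$ and invoke a vertex-weighted tree-orientation oracle on the bridge forest, and you rightly flag that the tree theorem, as quoted, is only its unweighted corollary. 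The paper avoids that gap entirely: its Proposition \ref{reach} (the Hakimi--Schmeichel--Young result, stated for an \emph{arbitrary} graph, not just trees) is applied directly to $G$ to obtain an orientation $\vec{G_0}$ maximizing $\sum \min\{\lambda,1\}+\min\{\lambda,1\}$, and then Claim \ref{p1} argues that replacing $\vec{G_0}$'s within-component arcs by well-balanced (hence strongly connected) orientations cannot decrease reachability for any pair in $\mathcal{P}_{\leq 1}$; this settles the ``main obstacle'' you identified without ever needing a weighted version. So your plan is sound, and the missing ingredient is simply to use the general-graph form of the reachability result on $G$ itself rather than the contracted forest.
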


The algorithm for Theorem \ref{mainapp} is rather simple and based on combining an algorithmic version of Theorem \ref{nwfort} and the result in \cite{hsy}.

The paper is structured as follows. In Section \ref{preli}, we provide some notation and preliminary results. In Section \ref{red}, we prove Theorem \ref{mainhard}. In Section \ref{appro}, we prove Theorem \ref{mainapp}. Finally, in Section \ref{conc}, we conclude our results and give some open problems.
\section{Preliminaries}\label{preli}
We here collect some preliminaries we need for the main proofs in Sections \ref{red} and \ref{appro}. In Section \ref{not}, we give some basic notation and in Section \ref{prelres}, we collect some preliminary results.
\subsection{Notation}\label{not}
We often use $x$ for a single element set $\{x\}$. When we explicitely define a set $\{x,y\}$, we assume that $x \neq y$. A partition of a set $S$ is a collection of nonempty subsets $(S_1,\ldots,S_t)$ of $S$ such that $\bigcup_{i=1}^tS_i =S$ and $S_i \cap S_j= \emptyset$ for all $i,j \in \{1,\ldots,t\}$ with $i\neq j$.
Let $G$ be a graph. For two disjoint sets $S,S' \subseteq V(G)$, we denote by $\delta_G(S,S')$ the set of edges in $E(G)$ that have one endvertex in $S$ and one endvertex in $S'$ and we use $d_G(S,S')$ for $|\delta_G(S,S')|$. If $d_G(S,S')=0$, we say that $S$ and $S'$ are {\it non-adjacent}. We abbreviate $\delta_G(S,V(G)-S)$ to $\delta_G(S)$ and use $d_G(S)$ for $|\delta_G(S)|$. We further use $N_G(S)$ for the set of vertices $v \in V(G)-S$ that satisfy $d_G(S,v)\geq 1$. We denote by $G[S]$ the subgraph of $G$ {\it induced} on $S$, that is the graph whose vertex set is $S$ and whose edge set contains all edges in $E(G)$ that have both endvertices in $S$. We use $i_G(S)$ for $|E(G[S])|$. For some $u,v \in V(G)$, we say that $S$ is a {\it $u\bar{v}$-set} if $u \in S$ and $v \in V(G)-S$. We let $\lambda_G(u,v)=\min\{d_G(S)| \text{ $S$ is a $u\bar{v}$-set }\}$. For some positive integer $k$, we say that $S$ is {\it $k$-edge-connected in $G$} if $\lambda_G(u,v)\geq k$ holds for all $\{u,v\} \subseteq S$ and we abbreviate 1-edge-connected to {\it connected}. We say that $G$ is {\it k-edge-connected} if $V(G)$ is $k$-edge-connected in $G$. A maximal connected subgraph of $G$ is called a {\it component} of $G$. For two vertices $u,v$, a {\it $uv$-path} is a connected graph $P$ with $d_P(u)=d_P(v)=1$ and $d_P(w)=2$ for all $w \in V(P)-\{u,v\}$. A {\it path} is a $uv$-path for some vertices $u,v$. For a path $P$ and $x,y \in V(P)$. we denote by $P_{x,y}$ the unique minimal connected graph with $\{x,y\}\subseteq V(P_{x,y})\subseteq V(P)$ and $E(P_{x,y})\subseteq E(P)$.

Let $D$ be a digraph. For two disjoint sets $S,S' \subseteq V(G)$, we denote by $\delta_D(S,S')$ the set of arcs in $A(D)$ whose tail is in $S$ and whose head is in $S'$ and we use $d_D(S,S')$ for $|\delta_D(S,S')|$. We abbreviate $\delta_D(S,V(D)-S)$ to $\delta_D^+(S)$ and use $\delta_D^-(S)$ for $\delta_D^+(V(D)-S)$, $d_D^+(S)$ for $|\delta_D^+(S)|$, and $d_D^-(S)$ for $|\delta_D^-(S)|$. We denote by $D[S]$ the subdigraph of $D$ {\it induced} on $S$, that is, the graph whose vertex set is $S$ and whose arc set contains all arcs in $A(D)$ that have both endvertices in $S$. For some $\{u,v\} \subseteq V(D)$, we let $\lambda_D(u,v)=\min\{d_D^+(S)| \text{ $S$ is a $u\bar{v}$-set }\}$. We say that $v$ is {\it reachable} from $u$ in $D$ if $\lambda_D(u,v)\geq 1$. For some positive integer $k$, we say that $S$ is {\it $k$-arc-connected in $D$} if $\min\{\lambda_D(u,v),\lambda_D(v,u)\}\geq k$ holds for all $\{u,v\} \subseteq S$ and we abbreviate 1-arc-connected to {\it strongly connected}. We say that $D$ is {\it k-arc-connected} if $V(D)$ is $k$-arc-connected in $D$. A maximal strongly connected subgraph of $D$ is called a {\it strongly connected component} of $G$. Further, for $u,v \in V(D)$, we denote by $\kappa_D(u,v)$ the maximum integer $\ell$ such that $|V(D)|\geq \ell+1$ and $\lambda_{D-S}(u,v)\geq 1$ for all $S \subseteq V(D)-\{u,v\}$ with $|S|\leq \ell-1$. We say that $D$ is $k$-vertex-connected for some positive integer $k$ if $\min\{\kappa_D(u,v),\kappa_D(v,u)\}\geq k$ holds for every $\{u,v\}\subseteq V(D)$. Recall that $tvc(D)=\sum_{\{u,v\}\subseteq V(D)}(\kappa_D(u,v)+\kappa_D(v,u))$ and   $tac(D)=\sum_{\{u,v\}\subseteq V(D)}(\lambda_D(u,v)+\lambda_D(v,u))$.

An {\it orientation} of a graph $G$ is a digraph $\vec{G}$ that is obtained from $G$ by replacing each edge in $E(G)$ by an arc with the same two endvertices. We say that $\vec{G}$ is {\it well-balanced} if $\min\{\lambda_{\vec{G}}(u,v),\lambda_{\vec{G}}(v,u)\}\geq \lfloor \frac{1}{2} \lambda_G(u,v)\rfloor$ for all $\{u,v\}\subseteq V(G)$. A {\it directed $uv$-path} is an orientation $\vec{P}$ of a $uv$-path $P$ with $\lambda_{\vec{P}}(u,v)=1$.
\subsection{Preliminary results}\label{prelres}
We first need two basic results on digraphs. Proposition \ref{summe} is easy to see and well-known and Proposition \ref{egal} follows from Proposition \ref{summe}.
\begin{Proposition}\label{summe}
Let $D$ be a digraph and $S \subseteq V(D)$. Then $d_D^+(S)-d_D^-(S)=\sum_{v\in S}(d_D^+(v)-d_D^-(v))$.
\end{Proposition}
\begin{Proposition}\label{egal}
Let $D$ be a digraph and $S,X \subseteq V(D)$ such that $d_D^+(x)=d_D^-(x)$ for all $x \in X$. Then $d_D^+(S)-d_D^-(S)=d_D^+(S\cup X)-d_D^-(S\cup X)$.
\end{Proposition}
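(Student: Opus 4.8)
The plan is to derive Proposition \ref{egal} directly from Proposition \ref{summe} by applying the latter to both $S$ and $S \cup X$ and then taking the difference. First I would invoke Proposition \ref{summe} for the set $S$ to get $d_D^+(S)-d_D^-(S)=\sum_{v\in S}\bigl(d_D^+(v)-d_D^-(v)\bigr)$, and then again for the set $S \cup X$ to get $d_D^+(S\cup X)-d_D^-(S\cup X)=\sum_{v\in S\cup X}\bigl(d_D^+(v)-d_D^-(v)\bigr)$.

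Next, since $S \subseteq S\cup X$, I would subtract the first identity from the second, so that the right-hand side becomes $\sum_{v\in (S\cup X)\setminus S}\bigl(d_D^+(v)-d_D^-(v)\bigr)$. Finally, I would observe that $(S\cup X)\setminus S \subseteq X$, so that by the hypothesis $d_D^+(v)=d_D^-(v)$ for every vertex $v$ in this index set; hence every summand is $0$, the sum vanishes, and the two quantities $d_D^+(S)-d_D^-(S)$ and $d_D^+(S\cup X)-d_D^-(S\cup X)$ coincide, as required. There is no genuine obstacle here: the argument is a two-line computation, and the only place calling for a modicum of care is checking that the symmetric manipulation of the index sets is valid, i.e.\ that $(S\cup X)\setminus S$ is indeed contained in $X$ so that the balancedness hypothesis applies to each of its elements.
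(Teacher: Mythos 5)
Your proof is correct and matches the route the paper intends: the paper explicitly states that Proposition \ref{egal} ``follows from Proposition \ref{summe},'' and your two-line argument — applying Proposition \ref{summe} to $S$ and to $S\cup X$, subtracting, and noting that all terms indexed by $(S\cup X)\setminus S\subseteq X$ vanish by the balancedness hypothesis — is precisely that deduction.
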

We next need two results on well-balanced orientations. Both are immediate consequences of the definition of well-balanced orientations.
\begin{Proposition}\label{ext}
Let $G$ be a graph, $a,a' \in V(G)$ with $d_G(a')=d_G(a',a)$ and $\vec{G}$ an orientation of $G$. Then $\vec{G}$ is well-balanced if and only if $\vec{G}-a'$ is a well-balanced orientation of $G-a'$ and $\lfloor\frac{1}{2}d_G(a')\rfloor\leq d_{\vec{G}}(a,a')\leq \lceil\frac{1}{2}d_G(a')\rceil$.
\end{Proposition}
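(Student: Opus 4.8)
The plan is to exploit that the hypothesis $d_G(a')=d_G(a',a)$ forces every edge incident to $a'$ to join $a'$ to $a$, so that $a'$ is a pendant blob hanging off $a$. I would first record two elementary facts, both obtained from Menger's theorem together with the observation that a simple path through $a'$ would have to enter and leave $a'$ via $a$ and hence visit $a$ twice. \emph{Fact 1:} for all $u,v\in V(G)-a'$ we have $\lambda_G(u,v)=\lambda_{G-a'}(u,v)$ and $\lambda_{\vec{G}}(u,v)=\lambda_{\vec{G}-a'}(u,v)$. \emph{Fact 2:} for all $u\in V(G)-a'$, $\lambda_G(a',u)=\lambda_G(u,a')=\min\{d_G(a'),\lambda_G(a,u)\}$, while $\lambda_{\vec{G}}(a',u)=\min\{d_{\vec{G}}^+(a'),\lambda_{\vec{G}}(a,u)\}$ and $\lambda_{\vec{G}}(u,a')=\min\{d_{\vec{G}}^-(a'),\lambda_{\vec{G}}(u,a)\}$. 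Taking $u=a$ in Fact~2 yields $\lambda_G(a,a')=\lambda_G(a',a)=d_G(a')$, and I will freely use $d_{\vec{G}}^+(a')=d_{\vec{G}}(a',a)$, $d_{\vec{G}}^-(a')=d_{\vec{G}}(a,a')$, and $d_{\vec{G}}^+(a')+d_{\vec{G}}^-(a')=d_G(a')$.

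For the forward direction, suppose $\vec{G}$ is well-balanced. For $u,v\in V(G)-a'$, Fact~1 gives $\lambda_{\vec{G}-a'}(u,v)=\lambda_{\vec{G}}(u,v)\ge\lfloor\frac{1}{2}\lambda_G(u,v)\rfloor=\lfloor\frac{1}{2}\lambda_{G-a'}(u,v)\rfloor$, so $\vec{G}-a'$ is a well-balanced orientation of $G-a'$. Moreover, since $\lambda_{\vec{G}}(a,a')\le d_{\vec{G}}^-(a')=d_{\vec{G}}(a,a')$ and $\lambda_G(a,a')=d_G(a')$, well-balancedness forces $d_{\vec{G}}(a,a')\ge\lfloor\frac{1}{2}d_G(a')\rfloor$; symmetrically $d_{\vec{G}}(a',a)\ge\lfloor\frac{1}{2}d_G(a')\rfloor$, whence $d_{\vec{G}}(a,a')=d_G(a')-d_{\vec{G}}(a',a)\le d_G(a')-\lfloor\frac{1}{2}d_G(a')\rfloor=\lceil\frac{1}{2}d_G(a')\rceil$.

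For the converse, suppose $\vec{G}-a'$ is a well-balanced orientation of $G-a'$ and $\lfloor\frac{1}{2}d_G(a')\rfloor\le d_{\vec{G}}(a,a')\le\lceil\frac{1}{2}d_G(a')\rceil$; the latter is equivalent to $\min\{d_{\vec{G}}^+(a'),d_{\vec{G}}^-(a')\}\ge\lfloor\frac{1}{2}d_G(a')\rfloor$. I must check $\min\{\lambda_{\vec{G}}(u,v),\lambda_{\vec{G}}(v,u)\}\ge\lfloor\frac{1}{2}\lambda_G(u,v)\rfloor$ for every pair $\{u,v\}$. If $u,v\ne a'$, this is Fact~1 combined with well-balancedness of $\vec{G}-a'$. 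If $v=a'$, then for any $u\ne a'$ Fact~2 gives $\lambda_{\vec{G}}(a',u)=\min\{d_{\vec{G}}^+(a'),\lambda_{\vec{G}}(a,u)\}\ge\min\{\lfloor\frac{1}{2}d_G(a')\rfloor,\lfloor\frac{1}{2}\lambda_G(a,u)\rfloor\}=\lfloor\frac{1}{2}\min\{d_G(a'),\lambda_G(a,u)\}\rfloor=\lfloor\frac{1}{2}\lambda_G(a',u)\rfloor$, where the middle step uses $\lambda_{\vec{G}}(a,u)=\lambda_{\vec{G}-a'}(a,u)\ge\lfloor\frac{1}{2}\lambda_{G-a'}(a,u)\rfloor=\lfloor\frac{1}{2}\lambda_G(a,u)\rfloor$; the bound $\lambda_{\vec{G}}(u,a')\ge\lfloor\frac{1}{2}\lambda_G(u,a')\rfloor$ is entirely symmetric. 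Hence $\vec{G}$ is well-balanced.

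The only steps that are not pure bookkeeping are Facts~1 and~2, the expected pendant-vertex statements; I expect their verification (the Menger/cut argument that $a'$ is irrelevant to connectivities among $V(G)-a'$, and that connectivities touching $a'$ are capped by $d_G(a')$, resp.\ by the in-/out-degree of $a'$) to be the main — though still routine — obstacle, after which everything collapses to the identity $\lfloor\frac{1}{2}d_G(a')\rfloor+\lceil\frac{1}{2}d_G(a')\rceil=d_G(a')$ and the monotonicity of $x\mapsto\lfloor x/2\rfloor$.
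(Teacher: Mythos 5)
Your proof is correct. The paper does not actually give a proof of Proposition~\ref{ext} (it only says the statement is an ``immediate consequence of the definition''), so there is nothing to compare against directly, but the argument you give is precisely the one the author evidently has in mind: the hypothesis $d_G(a')=d_G(a',a)$ makes $a'$ a pendant blob attached only to $a$, from which the two elementary facts you isolate follow by the standard cut-shifting argument (move $a'$ to whichever side of a cut $a$ lies on), and the rest is the floor/ceiling bookkeeping you record, together with the identity $\min\{\lfloor x/2\rfloor,\lfloor y/2\rfloor\}=\lfloor\tfrac{1}{2}\min\{x,y\}\rfloor$. One small remark: the proposition as printed contains a typo (``orientation of $G-a$'' should read ``orientation of $G-a'$''); your proof silently and correctly reads it in the only sensible way.
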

\begin{Proposition}\label{char}
Let $G$ be a graph and $a \in V(G)$ such that $\lambda_G(a,v)=d_G(v)$ holds for all $v \in V(G)-a$. Then an orientation $\vec{G}$ of $G$ is well-balanced if and only if $\min\{\lambda_{\vec{G}}(a,v),\lambda_{\vec{G}}(v,a)\}\geq \lfloor\frac{1}{2}\lambda_G(a,v)\rfloor$ holds for all $v \in V(G)-a$.
\end{Proposition}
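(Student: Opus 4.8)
The forward implication is immediate from the definition of a well-balanced orientation, since the displayed inequality for pairs of the form $\{a,v\}$ is simply a special case of the defining condition. So the plan is to concentrate on the converse, and the strategy is to use the fact that local edge-connectivity is controlled by ``routing through'' the high-degree vertex $a$, both in $G$ and in the orientation $\vec{G}$.

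The first step is to pin down $\lambda_G$ exactly: I claim the hypotheses force $\lambda_G(u,v)=\min\{d_G(u),d_G(v)\}$ for every $\{u,v\}\subseteq V(G)$. The upper bound is trivial, since $\{u\}$ and $\{v\}$ are $u\bar v$- and $v\bar u$-sets respectively, giving $\lambda_G(u,v)\le\min\{d_G(u),d_G(v)\}$. For the lower bound I will invoke the standard ``ultrametric'' inequality $\lambda_G(u,v)\ge\min\{\lambda_G(u,a),\lambda_G(a,v)\}$: if $S$ is a $u\bar v$-set attaining $\lambda_G(u,v)$, then either $a\in S$, so $S$ is an $a\bar v$-set and $d_G(S)\ge\lambda_G(a,v)$, or $a\notin S$, so $S$ is a $u\bar a$-set and $d_G(S)\ge\lambda_G(a,u)$. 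Since $\lambda_G(a,w)=d_G(w)$ for all $w\in V(G)-a$ and $d_G(a)>d_G(w)$ there, this yields $\lambda_G(u,v)\ge\min\{d_G(u),d_G(v)\}$ for $\{u,v\}$ not containing $a$, and it also holds trivially (with equality) when one of $u,v$ equals $a$.

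The second step transfers the assumed bounds from pairs through $a$ to an arbitrary pair. It suffices to treat $\{u,v\}$ with $a\notin\{u,v\}$, since for pairs containing $a$ the required inequality is exactly the hypothesis. Assume without loss of generality $d_G(u)\le d_G(v)$, so $\lfloor\tfrac12\lambda_G(u,v)\rfloor=\lfloor\tfrac12 d_G(u)\rfloor$ by the first step. Now apply the directed analogue of the cut-location argument in $\vec{G}$: if $S$ is a $u\bar v$-set, then either $a\in S$, making $S$ an $a\bar v$-set, or $a\notin S$, making $S$ a $u\bar a$-set; hence $\lambda_{\vec{G}}(u,v)\ge\min\{\lambda_{\vec{G}}(u,a),\lambda_{\vec{G}}(a,v)\}$, and symmetrically $\lambda_{\vec{G}}(v,u)\ge\min\{\lambda_{\vec{G}}(v,a),\lambda_{\vec{G}}(a,u)\}$. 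By the assumed inequalities for pairs through $a$, each of $\lambda_{\vec{G}}(u,a)$ and $\lambda_{\vec{G}}(a,u)$ is at least $\lfloor\tfrac12\lambda_G(a,u)\rfloor=\lfloor\tfrac12 d_G(u)\rfloor$, and each of $\lambda_{\vec{G}}(a,v)$ and $\lambda_{\vec{G}}(v,a)$ is at least $\lfloor\tfrac12 d_G(v)\rfloor\ge\lfloor\tfrac12 d_G(u)\rfloor$. Therefore $\min\{\lambda_{\vec{G}}(u,v),\lambda_{\vec{G}}(v,u)\}\ge\lfloor\tfrac12 d_G(u)\rfloor=\lfloor\tfrac12\lambda_G(u,v)\rfloor$, which is precisely the condition for $\vec{G}$ to be well-balanced.

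I do not anticipate a real obstacle here; the argument is short once the two cut-location inequalities are stated cleanly. The only points requiring a little care are the degenerate cases — when $u$ or $v$ coincides with $a$, or when $d_G(u)=d_G(v)$ — and these are all absorbed uniformly into the identity $\lambda_G(u,v)=\min\{d_G(u),d_G(v)\}$ established in the first step.
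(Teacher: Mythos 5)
Your proof is correct. The paper states this proposition without proof, calling it an immediate consequence of the definition, and your argument — establishing $\lambda_G(u,v)=\min\{d_G(u),d_G(v)\}$ via the cut-location (``ultrametric'') inequality through $a$, then transferring the floor bounds in $\vec{G}$ through $a$ in the same way — is exactly the routine verification the author is alluding to.
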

The next result on orientations is due to Hakimi \cite{hak}.
\begin{Proposition}\label{hakimi}
Given a graph $G$ and a function $\phi:V(G)\rightarrow \mathbb{Z}_{\geq 0}$, there is an orientation $\vec{G}$ of $G$ with $d_{\vec{G}}^+(v)=\phi(v)$ for all $v \in V(G)$ if and only if $|E(G)|=\sum_{v \in V(G)}\phi(v)$ and $i_G(S)\leq \sum_{v \in S}\phi(v)$ holds for all $S \subseteq V(G)$.
\end{Proposition}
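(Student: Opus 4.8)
The plan is to establish necessity by elementary counting, and then prove sufficiency by an exchange argument: start from an arbitrary orientation and correct the out-degrees by reversing suitable directed paths.

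For necessity, suppose $\vec{G}$ is an orientation with $d_{\vec{G}}^+(v)=\phi(v)$ for every $v\in V(G)$. Each edge of $G$ becomes exactly one arc, so $|E(G)|=\sum_{v\in V(G)}d_{\vec{G}}^+(v)=\sum_{v\in V(G)}\phi(v)$; and for any $S\subseteq V(G)$, each of the $i_G(S)$ edges of $G[S]$ is oriented into a distinct arc whose tail lies in $S$, so $i_G(S)\leq\sum_{v\in S}d_{\vec{G}}^+(v)=\sum_{v\in S}\phi(v)$.

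For sufficiency I would begin with any orientation $\vec{G}$ of $G$ and measure its defect by $\Phi(\vec{G})=\sum_{v\in V(G)}|d_{\vec{G}}^+(v)-\phi(v)|$. If $\Phi(\vec{G})>0$ then, since $\sum_v d_{\vec{G}}^+(v)=|E(G)|=\sum_v\phi(v)$, some vertex has positive excess $d_{\vec{G}}^+(v)-\phi(v)$; let $R$ be the set of vertices reachable in $\vec{G}$ by directed paths from a positive-excess vertex. I would show $R$ contains a deficient vertex $w$ (one with $d_{\vec{G}}^+(w)<\phi(w)$); then $w$ is reachable from some positive-excess vertex $u$, and reversing all arcs of a directed $uw$-path lowers $d_{\vec{G}}^+(u)$ by $1$, raises $d_{\vec{G}}^+(w)$ by $1$, and changes no other out-degree. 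Since $d_{\vec{G}}^+(u)\geq\phi(u)+1$ and $d_{\vec{G}}^+(w)\leq\phi(w)-1$ beforehand (the relevant quantities being integers), $\Phi$ drops by exactly $2$, so iterating terminates with $\Phi=0$, i.e.\ with the desired orientation.

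The crux — and the only place the hypothesis $i_G(S)\leq\sum_{v\in S}\phi(v)$ is used — is showing that $R$ must contain a deficient vertex, and I expect this reachability step to be the main (though mild) obstacle. Here I would note that no arc of $\vec{G}$ leaves $R$, since an arc from $R$ to $V(G)-R$ would put its head in $R$; hence $d_{\vec{G}}^+(R)=0$, so every arc with tail in $R$ has both ends in $R$ and therefore $\sum_{v\in R}d_{\vec{G}}^+(v)=i_G(R)$. If $R$ had no deficient vertex, then $d_{\vec{G}}^+(v)\geq\phi(v)$ for all $v\in R$ with strict inequality for at least one $v$, giving $i_G(R)=\sum_{v\in R}d_{\vec{G}}^+(v)>\sum_{v\in R}\phi(v)\geq i_G(R)$, a contradiction. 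As an alternative proof of sufficiency, one can model the task as an integral $s$--$t$ flow (an arc $s\to e$ of capacity $1$ for each edge $e$, arcs $e\to u$ and $e\to v$ of capacity $1$ for the two endpoints of $e$, and an arc $v\to t$ of capacity $\phi(v)$ for each vertex $v$) and invoke max-flow/min-cut: an optimal cut may be taken to retain exactly the edges of $G[S]$ on the source side for some $S\subseteq V(G)$, and the condition that every such cut has value at least $|E(G)|$ is precisely $i_G(S)\leq\sum_{v\in S}\phi(v)$, while saturation at the sink uses the cardinality equality.
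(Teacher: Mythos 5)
The paper states this proposition as a known result of Hakimi and provides no proof of its own, so there is nothing in the paper to compare against directly. Your proof is correct and self-contained. The necessity direction is exactly the standard counting argument. For sufficiency, your augmenting-path exchange argument is sound: the key observation that $R$ is closed under out-arcs, hence $\sum_{v\in R}d_{\vec{G}}^+(v)=i_G(R)$, together with $i_G(R)\leq\sum_{v\in R}\phi(v)$, correctly forces a deficient vertex inside $R$; reversing a directed path from a strict-excess vertex to that deficient vertex strictly decreases $\Phi$ by $2$ while leaving intermediate out-degrees unchanged, so the procedure terminates. The alternative flow formulation you sketch is the other standard route and is also correct: a min cut of the three-layer network can be normalized to put a node $e$ on the source side precisely when both its endpoints are, at which point its value is $|E(G)|-i_G(S)+\sum_{v\in S}\phi(v)$, and the degree condition is equivalent to all such cuts having value at least $|E(G)|$, while the cardinality equation ensures that a max flow saturates all sink arcs. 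Either argument would serve as a complete proof of the proposition the paper takes for granted.
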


The next result shows that every graph can be decomposed into 2-edge-connected parts. It is both routine and well-known.
\begin{Proposition}\label{2ec}
For every graph $G$, there is a unique partition $(S_1,\ldots,S_t)$ of $V(G)$ such that $G[S_i]$ is 2-edge-connected for $i=1,\ldots,t$ and $\lambda_{G}(u,v)\leq 1$ for every $\{u,v\}\subseteq V(G)$ such that $u \in S_i$ and $v \in S_j$ for some $i,j \in \{1,\ldots,t\}$ with $i \neq j$. Moreover this partition can be computed in polynomial time.
\end{Proposition}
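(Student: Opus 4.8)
The plan is to let $(S_1,\ldots,S_t)$ be the family of vertex sets of the connected components of the graph $G'$ obtained from $G$ by deleting all of its bridges, where a \emph{bridge} is an edge $e$ such that $G-e$ has strictly more connected components than $G$. The first step is the elementary remark that $G'$ is itself bridgeless: if $e\in E(G')$ were a bridge of $G'$, it would lie in no cycle of $G'$; but any cycle of $G$ through $e$ avoids all bridges of $G$ and hence lies in $G'$, so $e$ would lie in no cycle of $G$ at all and would be a bridge of $G$, contradicting $e\in E(G')$. The same remark shows that each $G'[S_i]$, being a connected component of the bridgeless graph $G'$, is connected and bridgeless, and hence $2$-edge-connected in the sense of the paper (a connected graph with no bridge satisfies $\lambda\geq 2$ between every pair of its vertices, the one-vertex case being vacuously fine).

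The heart of the argument is the equivalence: for $\{u,v\}\subseteq V(G)$, one has $\lambda_G(u,v)\geq 2$ if and only if $u$ and $v$ lie in the same connected component of $G'$. For the forward implication, pick two edge-disjoint $uv$-paths $P_1,P_2$ in $G$ (available by Menger's theorem). Neither $P_i$ can use a bridge $e$ of $G$: letting $\delta_G(X)=\{e\}$ be the associated one-edge cut, if $u$ and $v$ lie on opposite sides of $X$ then both paths must cross $\delta_G(X)$ and hence both use $e$, contradicting edge-disjointness, while if $u$ and $v$ lie on the same side then $P_i$ must cross $\delta_G(X)$ at least twice and hence uses $e$ at least twice, contradicting that it is a path. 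Thus $P_1$ is contained in $G'$ and $u,v$ share a component of $G'$. For the converse, a $uv$-path contained in $G'$ uses no bridge of $G$, so no one-edge cut of $G$ separates $u$ from $v$ (the single edge of such a cut would be a bridge of $G$, disjoint from that path yet separating its endpoints) and $u,v$ are connected in $G$; hence $\lambda_G(u,v)\geq 2$.

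The two conditions in the statement now follow. If $u\in S_i$ and $v\in S_j$ with $i\neq j$, then $u$ and $v$ lie in different components of $G'$, so by the equivalence $\lambda_G(u,v)\leq 1$. For each $i$, since $G'[S_i]$ is a spanning $2$-edge-connected subgraph of $G[S_i]$ and adding edges cannot decrease any local edge-connectivity value, $G[S_i]$ is $2$-edge-connected. Finally, the bridges of $G$ are computable in linear time by a depth-first search and the components of $G'$ in linear time thereafter, so the whole partition is found in polynomial time. The only point that needs a little care is that $2$-edge-connectivity of $G[S_i]$ refers to $\lambda_{G[S_i]}$ and not to $\lambda_G$, which is exactly why the argument routes through the bridgeless subgraph $G'[S_i]$ rather than trying to work with $\lambda_G$-values inside $S_i$ directly; the rest is bookkeeping. (Alternatively, one could define $u\sim v$ by $\lambda_G(u,v)\geq 2$, verify transitivity from $\lambda_G(u,w)\geq\min\{\lambda_G(u,v),\lambda_G(v,w)\}$, and then identify the classes with the components of $G'$; the bridge-based description seems the most transparent.)
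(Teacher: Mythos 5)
Your proof is correct, and it is the standard argument via bridges and the $2$-edge-connected components; the paper itself omits a proof, labelling the proposition ``routine and well-known,'' so there is nothing to contrast it with. The only points worth spelling out are exactly the ones you handled with care: that $G'$ (and hence each of its components) remains bridgeless because any cycle of $G$ through a non-bridge avoids every bridge of $G$, that the paper's definition makes a single vertex vacuously $2$-edge-connected, and that $2$-edge-connectivity of $G[S_i]$ rather than merely of $G'[S_i]$ follows since adding back edges inside $S_i$ cannot lower $\lambda$.
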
 
We further need the following algorithmic result on well-balanced orientations which is due to Gabow \cite{g}.

\begin{Proposition}\label{gabow}
Given a graph $G$, a well-balanced orientation of $G$ can be computed in polynomial time.
\end{Proposition}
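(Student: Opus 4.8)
The plan is to make Nash-Williams' proof of Theorem~\ref{nwfort} constructive. The base case is the one where $G$ is \emph{Eulerian}, that is, $d_G(v)$ is even for every $v \in V(G)$. Then any Eulerian orientation $\vec{G}$ of $G$ — obtained in polynomial time by orienting an Eulerian circuit of each component — is well-balanced: since $d_{\vec{G}}^+(v)=d_{\vec{G}}^-(v)$ for all $v$, Proposition~\ref{summe} gives $d_{\vec{G}}^+(S)=d_{\vec{G}}^-(S)=\frac{1}{2}d_G(S)$ for every $S\subseteq V(G)$, hence $\lambda_{\vec{G}}(u,v)=\frac{1}{2}\lambda_G(u,v)$ for every $\{u,v\}\subseteq V(G)$; as $d_G(S)$, and therefore $\lambda_G(u,v)$, is even in an Eulerian graph, this equals $\lfloor\frac{1}{2}\lambda_G(u,v)\rfloor$.

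For a general graph $G$, let $T$ denote its set of vertices of odd degree; note that $|T|$ is even. Following Nash-Williams, we reduce to the Eulerian case through an \emph{odd-vertex pairing}: a set $M$ of new edges forming a perfect matching on $T$, chosen so that $G+M$ is Eulerian. The sharp form of Theorem~\ref{nwfort} proved in \cite{N60} asserts that $M$ can be taken \emph{feasible}, meaning that $G+M$ has an Eulerian orientation whose restriction to $G$ — delete the arcs arising from $M$ — is a well-balanced orientation of $G$. An equivalent viewpoint: add a new vertex $z$ joined by a single edge to each vertex of $T$, fix an Eulerian orientation of the resulting (Eulerian) graph, and completely split $z$ off by a sequence of directed splittings, each replacing a pair of arcs $(x,z),(z,y)$ by a single arc $(x,y)$; the shortcut arcs remaining after $z$ is removed are exactly the oriented edges of $M$, and feasibility of the pairing corresponds to these splittings being \emph{admissible} in the sense demanded by Nash-Williams' argument. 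Given a feasible $M$, the remaining work — form $G+M$, compute an Eulerian orientation, delete the arcs of $M$ — is clearly polynomial.

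Hence the algorithmic content lies in constructing a feasible pairing (equivalently, a complete admissible splitting at $z$) in polynomial time, and this is done incrementally. Only $|T|/2\le |V(G)|/2$ edges have to be added, each pairing up two further vertices of $T$, so it suffices to perform one step in polynomial time. A step consists in selecting a new edge on two still-unmatched vertices of $T$ that keeps the partial pairing extendable to a feasible one; whether a given candidate is admissible is controlled by the (non-)existence of certain ``dangerous'' cuts, which is tested by a polynomial number of maximum-flow computations — and when a dangerous cut is found it indicates which candidate to try next. That a valid step always exists is the combinatorial core of Nash-Williams' theorem, while organising the flow computations so that the whole algorithm runs in small polynomial time is the contribution of Gabow in \cite{g}; for the present statement it is enough that a single step is polynomial and that $O(|V(G)|)$ steps suffice.

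The principal obstacle is thus not the running-time bookkeeping but this structural input: one needs to know both that a partial feasible pairing is always extendable, and that the Eulerian orientation of $G+M$ produced at the end satisfies $\min\{\lambda_{\vec{G}}(u,v),\lambda_{\vec{G}}(v,u)\}\ge\lfloor\frac{1}{2}\lambda_G(u,v)\rfloor$ for \emph{every} pair $\{u,v\}\subseteq V(G)$ simultaneously — a global guarantee whose proof must control how the arcs of $M$ cross all the minimal cuts of $G$. This is exactly Theorem~\ref{nwfort} in its sharp form, which we use as a black box; the task is only to convert each of its existence steps into a polynomial-time search, which is what \cite{g} achieves.
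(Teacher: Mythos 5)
The paper offers no proof of this proposition at all---it is imported directly from Gabow \cite{g}---and your sketch is a faithful outline of the route behind that citation: handle Eulerian graphs by an Eulerian orientation (your computation via Proposition \ref{summe} that every cut is split exactly in half is correct), and reduce the general case to it through a feasible odd-vertex pairing in the sense of Nash-Williams, computed by splitting off at an auxiliary vertex. The one caution is that the incremental step you describe---``choose a new pair that keeps the partial pairing extendable to a feasible one, tested via dangerous cuts and max-flow computations''---is precisely where the entire difficulty of \cite{N60} and \cite{g} is concentrated, since feasibility of a pairing is a global condition on how $M$ crosses \emph{all} cuts of $G$ and is not equivalent to Mader-type admissibility of a single splitting; so what you have written should be read as an accurate description of the cited algorithm rather than a self-contained proof, which is exactly the level at which the paper itself uses the result.
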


Finally, we need the following result of Hakimi, Schleichel and Young from \cite{hsy} mentioned in the introduction.

\begin{Proposition}\label{reach}
Given a graph $G$, we can find in polynomial time an orientation $\vec{G}$ of $G$ that maximizes $\sum_{\{u,v\}\subseteq V(G)}(\min\{\lambda_{\vec{G}}(u,v),1\}+\min\{\lambda_{\vec{G}}(v,u),1\})$ over all orientations of $G$.
\end{Proposition}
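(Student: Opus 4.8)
The plan is to reduce the problem to the special case of orienting a vertex-weighted tree and to solve that case by a bottom-up dynamic program. Throughout, note that $\sum_{\{u,v\}}\big(\min\{\lambda_{\vec G}(u,v),1\}+\min\{\lambda_{\vec G}(v,u),1\}\big)$ is exactly the number of ordered pairs $(u,v)$ of distinct vertices such that $v$ is reachable from $u$ in $\vec G$; write $r(\vec G)$ for this quantity, so that we want to compute in polynomial time an orientation maximizing $r$.

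First I would invoke Proposition~\ref{2ec} to compute in polynomial time a partition $(S_1,\dots,S_t)$ of $V(G)$ with each $G[S_i]$ being $2$-edge-connected and $\lambda_G(u,v)\le 1$ whenever $u,v$ lie in different parts; in particular every edge joining two distinct parts is a bridge of $G$. The key reduction step is the claim that some orientation maximizing $r$ has $\vec G[S_i]$ strongly connected for every $i$. Indeed, starting from any optimal orientation and re-orienting each $G[S_i]$ to be strongly connected --- possible by Theorem~\ref{rob}, and doable in polynomial time via Proposition~\ref{gabow}, since a well-balanced orientation of a $2$-edge-connected graph is strongly connected --- cannot decrease $r$: any directed $uv$-walk in the old orientation passes between parts only along bridges, each of which keeps its direction, and inside each part the walk can be rerouted between the relevant boundary vertices using strong connectivity, so every formerly reachable pair stays reachable; additionally all pairs inside a common part become mutually reachable.

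Now assume all parts are strongly connected. Contract each $S_i$ to a single vertex of weight $w_i:=|S_i|$; since the inter-part edges are bridges, this turns each connected component of $G$ into a tree, so $G$ becomes a forest, and (treating each component separately) we may assume it becomes a tree $T$. For $u\in S_i$ and $v\in S_j$ with $i\neq j$ one checks, by following the sequence of bridge crossings of a directed walk (each bridge being crossable only in its fixed direction), that $v$ is reachable from $u$ in $\vec G$ if and only if the node of $T$ representing $S_j$ is reachable from the node representing $S_i$ in the induced orientation $\vec T$. Hence $r(\vec G)=\sum_i w_i(w_i-1)+r_w(\vec T)$, where $r_w(\vec T):=\sum w_iw_j$ over ordered pairs of distinct nodes of $T$ with the second reachable from the first, and the bridge directions can be chosen independently of the orientations inside the parts. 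So it remains to maximize $r_w$ over all orientations of the weighted tree $T$ (with $\sum_v w(v)\le n:=|V(G)|$) in polynomial time; an optimal $\vec T$ then lifts to an optimal $\vec G$ by orienting each $G[S_i]$ strongly.

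For the weighted tree I would root $T$ and compute, bottom-up, for each vertex $v$ a table $M_v$ in which $M_v[p][q]$ is the maximum number of weighted reachable ordered pairs lying inside the subtree $T_v$, over all orientations of the edges of $T_v$ for which the total weight of vertices reachable from $v$ within $T_v$ equals $p$ and the total weight of vertices reaching $v$ within $T_v$ equals $q$; since $0\le p,q\le n$, each $M_v$ has size $O(n^2)$. The table of $v$ is built by incorporating its children one at a time: given the current table and a new child $c$ with table $M_c$, orienting the edge $vc$ away from $v$ increases $v$'s reachable weight by $c$'s reachable weight $p'$ (leaving $v$'s in-reach unchanged) and creates exactly (current in-reach of $v$)$\cdot\, p'$ new weighted pairs, those from something reaching $v$ to something reachable from $c$; orienting $vc$ towards $v$ is symmetric with the roles of ``reaches'' and ``is reachable from'' interchanged. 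The answer for $T$ is $\max_{p,q}M_r[p][q]$ at the root $r$, and standard backtracking recovers the orientation. The step I expect to demand the most care is the correctness of this program: one must verify that every ordered reachable pair inside a subtree is counted in exactly one child-incorporation step, with nothing double-counted or missed --- especially pairs that pass ``through'' a vertex from one of its subtrees to another --- which is a finite but fussy bookkeeping argument relying on uniqueness of paths in a tree; checking the rerouting claim in the exchange argument of the second paragraph is a secondary point requiring care.
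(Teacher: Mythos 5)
Your argument is correct, but note that the paper does not actually prove Proposition~\ref{reach}: it is quoted as a known result of Hakimi, Schmeichel and Young \cite{hsy}, so there is no in-paper proof to compare against. What you have written is essentially a faithful reconstruction of the standard argument from that reference: reduce to the bridge-tree of the $2$-edge-connected components via the exchange argument (re-orienting each component strongly never destroys reachability, since inter-component edges are bridges and keep their orientation), then run a dynamic program on the resulting node-weighted tree with states indexed by the weight reaching and reachable from the current root. Both halves check out --- the exchange step is sound because every inter-part edge is forced to be a bridge by $\lambda_G(u,v)\le 1$, and the tree DP has correct optimal substructure since the contribution of attaching a child depends only on the $(p,q)$ profiles and pairs passing through $v$ between two child subtrees are caught exactly once, at the incorporation of the later child. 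The one convention you should fix explicitly when writing this up is whether $v$ and $c$ count toward their own in-reach and out-reach weights (they must, or the cross terms $q\cdot p'$ undercount pairs involving $v$ or $c$ themselves).
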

\section{The reduction}\label{red}

We here prove Theorem \ref{mainhard}.
Formally, we consider the following problem:

\medskip
Optimally connected orientation ({\bf OCO}):
\medskip

Input: A graph $G$, a positive integer $k$.
\medskip

Question: Is there an orientation $\vec{G}$ of $G$ such that $\sum_{\{u,v\}\subseteq V(G)}(\lambda_{\vec{G}}(u,v)+\lambda_{\vec{G}}(v,u))\geq k$?
\medskip

We show the following restatement of Theorem \ref{mainhard}:

\begin{Theorem}\label{opthard}
OCO is NP-complete.
\end{Theorem}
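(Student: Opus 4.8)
The plan is to reduce from the NP-complete problem established by the author and Szigeti in \cite{hs}, namely deciding whether a graph has a well-balanced orientation respecting only upper bounds on the outdegrees. Call this problem \textsc{UB-WBO}: given a graph $H$ and a function $g:V(H)\to\mathbb Z_{\geq 0}$, decide whether $H$ admits a well-balanced orientation $\vec H$ with $d_{\vec H}^+(v)\leq g(v)$ for all $v\in V(H)$. The strategy is to attach gadgets to $H$ so that in the enlarged graph $G$ the value $tac(\vec G)$ is maximized \emph{exactly} by those orientations whose restriction to $H$ is a well-balanced orientation obeying the upper bounds, and to compute the resulting threshold $k$ explicitly in polynomial time.

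First I would recall the quantitative version of the $\frac23$-phenomenon: for any graph $H$ and any orientation $\vec H$, one has $\lambda_{\vec H}(u,v)+\lambda_{\vec H}(v,u)\leq \lambda_H(u,v)$ for every pair, with equality forced (up to the floor rounding) precisely when $\vec H$ is well-balanced and $\lambda_H(u,v)$ behaves suitably; this identifies well-balanced orientations as the natural maximizers of $tac$ on a fixed graph, and is essentially the content used by Henning and Oellermann. So the ``$tac$-optimal'' orientations of $H$ already tend to be well-balanced ones. The real work is to additionally \emph{enforce} the outdegree upper bounds $g$ and to make the optimum a clean arithmetic expression. To do this I would, for each vertex $v\in V(H)$, attach a pendant gadget — for instance a bundle of parallel edges or a small clique-like block joined to $v$ by a controlled number of edges — whose internal and connecting structure is chosen (via Hakimi's orientation theorem, Proposition \ref{hakimi}, and the extension Proposition \ref{ext} for well-balanced orientations) so that realizing the maximum possible $tac$-contribution of the gadget together with its interaction with $v$ costs exactly $g(v)$ of the arcs leaving $v$ towards the rest of $G$ in one direction. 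The point is that the gadget should be ``greedy'' for out-arcs at $v$: to maximize its local $tac$ contribution, the orientation must orient at least $d_H(v)-g(v)$ of the $H$-edges at $v$ \emph{into} $v$, i.e. $d^+_{\vec G}(v)\cap A(H)\leq g(v)$, matching the upper-bound constraint. One can calibrate the gadgets so that Proposition \ref{char} or Proposition \ref{ext} makes their optimal orientation unique and well-understood, hence the optimal gadget contributions sum to a number computable in polynomial time; adding the maximum achievable $\sum_{\{u,v\}\subseteq V(H)}\lfloor\frac12\lambda_H(u,v)\rfloor\cdot 2$ (or the appropriate well-balanced optimum, which by Gabow's Proposition \ref{gabow} and the $\frac23$-bound is efficiently computable) yields the threshold $k$.

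Membership in NP is immediate: an orientation $\vec G$ is a polynomial-size certificate and $tac(\vec G)$ can be computed in polynomial time by $O(|V|^2)$ max-flow computations. For hardness I would then argue the equivalence in two directions. If $H$ has a well-balanced orientation $\vec H$ with $d^+_{\vec H}(v)\leq g(v)$ for all $v$, extend it to $G$ by orienting each gadget in its (unique, precomputed) optimal way; the gadget calibration guarantees this extension is consistent with the upper bounds, and by Propositions \ref{ext} and \ref{char} the resulting $\vec G$ simultaneously attains the gadget optima and keeps $\vec H$ well-balanced, so $tac(\vec G)\geq k$. Conversely, if $tac(\vec G)\geq k$ for some orientation $\vec G$, then since $k$ was defined as the sum of the individual maxima, $\vec G$ must attain \emph{each} term: every gadget is oriented optimally, which forces $d^+_{\vec G}(v)\leq g(v)$ on the $H$-edges at each $v$, and the $H$-part must attain the well-balanced optimum, which (using the quantitative $\frac23$-bound and the characterization in Proposition \ref{char}) forces $\vec G[V(H)]$ to be well-balanced. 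Hence $\vec G$ restricted to $H$ is a well-balanced orientation respecting $g$.

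The main obstacle I anticipate is the gadget design and the exact bookkeeping: one must build pendant structures whose contribution to $tac$ — including all the new pairwise terms between gadget vertices, between a gadget and $v$, and between a gadget and the rest of $G$ — is maximized by an orientation that is both \emph{unique} and \emph{forces precisely the inequality $d^+\leq g(v)$} at the attachment vertex, no more and no less, while not introducing any slack that a clever orientation could exploit to compensate a violation of the bound elsewhere. Getting the rounding (the floors in the definition of well-balanced) to line up with integer gadget sizes, and ensuring the reduction stays polynomial in size, is exactly where the proof becomes, in the author's words, ``pretty involved''; this is where I would expect to spend the bulk of the argument, likely refining the construction of \cite{hs} rather than reusing it as a black box.
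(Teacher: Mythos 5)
The high-level plan (reduce from the upper-bounded well-balanced orientation problem, attach gadgets so that $tac$ is maximized exactly by extensions of bounded well-balanced orientations, compute a threshold $k$) points in the right direction, but the concrete mechanism you sketch does not match what actually makes the reduction go through, and one of your key premises is wrong.

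First, you identify well-balanced orientations as ``the natural maximizers of $tac$.'' This is false in general, and the falseness is precisely the source of the difficulty. For a single pair one has $\lambda_{\vec H}(u,v)+\lambda_{\vec H}(v,u)\leq \lambda_H(u,v)$, and a well-balanced orientation only realizes $2\lfloor\tfrac12\lambda_H(u,v)\rfloor$, which when $\lambda_H(u,v)$ is odd is one \emph{less} than what a lopsided orientation (all flow one way) can achieve. So a $tac$-optimal orientation of an arbitrary graph need not be well-balanced at all; the path example with tripled edges at the end of the paper shows exactly this. The reduction has to be engineered around this parity issue. The paper does so by reducing not from the general UBWBO problem but from a heavily restricted variant SSUBWBO in which all vertices of $V_4\cup a$ have \emph{even} degree (so the floor is harmless on pairs inside $V_4\cup a$), $a$ is a degree-dominating hub so that every local connectivity is just a min of degrees, and the only odd-degree constrained vertices are the degree-$3$ vertices of $V_3$. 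Obtaining SSUBWBO from FSUBWBO already requires the $(\alpha,\beta)$-gadget of Section \ref{gadget} and the nontrivial Lemma \ref{01}. Reducing directly from UBWBO, as you propose, gives you none of this structure and leaves you with no handle on the parity problem.

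Second, the role of the attached structures in the paper is not what you assign to your pendant gadgets. You want a gadget at $v$ that is ``greedy for out-arcs,'' so that maximizing its local $tac$ contribution forces $d^+_{\vec G}(v)$ to be small on the $H$-side. This is not substantiated and is dubious as stated: for pairs $\{x,y\}$ both inside a pendant block, the connectivities do not see the $H$-edges at $v$, and for a pair $(x,w)$ with $x$ in the gadget and $w$ in $H$, what is bottlenecked is a min over several cuts, not the out-degree at $v$; increasing out-arcs at $v$ toward $H$ helps $\lambda(x,w)$ and hurts $\lambda(w,x)$ in a way that need not change the sum. The paper's tubes play a different role: a $(n^5,d_G(v))$-tube or $(n^2,3)$-tube at $v$ multiplies the \emph{number of ordered pairs} whose connectivity is governed by the pair-connectivities of $G$, with carefully chosen exponents $n^5$ versus $n^2$ so that super-important pairs dominate at order $n^{10}$, important-but-not-super-important pairs at order $n^7$, and all remaining pairs are $o(n^7)$. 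It is this hierarchy that makes the counting argument in Claim \ref{connasymm} force every important pair to attain $\min\{d_G(u),d_G(v)\}$. The degree constraint $\ell(v)=1$ on $V_3$ is then recovered indirectly through the auxiliary degree-$3$ vertex $a'$ in Claim \ref{schluss} once strong connectivity (Claim \ref{strong}) and two-sided lower bounds on $V_4$ (Claim \ref{connsymm}) are in hand; it is not enforced by a per-vertex gadget.

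So the missing ideas are: (i) the two-stage reduction UBWBO $\to$ SSUBWBO $\to$ OCO, with the gadget of Section \ref{gadget} needed already in the first stage; (ii) the even-degree/hub structure of SSUBWBO that neutralizes the floor and turns local connectivities into degree minima; (iii) tubes used as pair multipliers rather than as out-degree enforcers; and (iv) indirect recovery of the degree bound via $a'$. Your plan, if pursued as written, would stall exactly at the point where it needs to rule out that some unbalanced orientation beats every well-balanced one, and at the point where it needs a gadget that ``forces $d^+\leq g(v)$'' without any described mechanism for doing so.
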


We consider the following problem:

\medskip
Upper-bounded well-balanced orientation ({\bf UBWBO}):
\medskip

Input: A graph $G$, a function $\ell:V(G)\rightarrow \mathbb{Z}_{\geq 0}$.
\medskip

Question: Is there a well-balanced orientation $\vec{G}$ of $G$ such that $d_{\vec{G}}^+(v)\leq \ell(v)$ for all $v \in V(G)$?
\medskip

The NP-hardness of UBWBO has been proven by the author and Szigeti \cite{hs}.
Actually, we need two technical strengthenings of this result where we restrict the instances to satisfy a collection of technical extra conditions. We call these problems First Special Upper-Bounded Well-balanced Orientation (FSUBWBO) and Second Special Upper-Bounded Well-balanced Orientation (SSUBWBO). The technical definitions of these problems will be postponed to Section \ref{firstsecond2}. While the hardness of FSUBWBO is an immediate consequence of the construction on \cite{hs}, it takes a significant effort to conclude the following hardness result for SSUBWBO from the hardness of FSUBWBO.

\begin{Lemma}\label{2hard}
SSUBWBO is NP-complete.
\end{Lemma}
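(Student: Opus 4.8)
The plan is to derive the hardness of SSUBWBO from that of FSUBWBO by a reduction that massages a generic instance of FSUBWBO into one satisfying the additional technical conditions required by SSUBWBO, while preserving the answer. First I would recall the construction from \cite{hs} underlying the hardness of UBWBO and note that, by inspection, it already produces instances $(G,\ell)$ with a number of structural properties — for instance, the existence of a distinguished high-degree vertex $a$ with $\lambda_G(a,v)=d_G(v)$ for all $v$, so that Proposition \ref{char} applies, and control over the parities of the relevant degrees — which is exactly what is packaged as FSUBWBO. From such an instance, the goal is to attach auxiliary gadgets (pendant structures, subdivided edges, parallel copies of edges, and perhaps a few new vertices of prescribed degree) so that the resulting graph $G'$ meets every clause in the definition of SSUBWBO.

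The key steps, in order, would be: (1) spell out precisely which extra hypotheses SSUBWBO demands beyond FSUBWBO (these are the "collection of technical extra conditions" alluded to in Section \ref{firstsecond2}); (2) for each such hypothesis, design a local modification of $G$ that forces it — typically one adds, at a vertex, a small 2-edge-connected gadget whose every well-balanced orientation is essentially forced by Proposition \ref{ext}, so that it contributes nothing ambiguous, and one uses Proposition \ref{hakimi}-type degree bookkeeping to keep the outdegree bounds $\ell$ consistent; (3) extend $\ell$ to the new vertices in the unique sensible way and verify, using Propositions \ref{ext} and \ref{char}, that $G'$ has a well-balanced orientation respecting $\ell'$ if and only if $G$ has one respecting $\ell$ — the forward direction by restricting an orientation of $G'$ and invoking Proposition \ref{ext} repeatedly to peel off the gadgets, the backward direction by orienting each gadget in its forced way; and (4) check that $G'$ and $\ell'$ are computable in polynomial time and that $G'$ indeed satisfies all of the SSUBWBO conditions, which is where the bulk of the case analysis lives. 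Membership of SSUBWBO in NP is immediate since a well-balanced orientation is a polynomial-size certificate whose defining inequalities, together with the outdegree bounds, can be checked via max-flow computations.

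The main obstacle I expect is step (2)–(3): the technical conditions of SSUBWBO are presumably not independent, so a gadget installed to enforce one condition can easily destroy another (for example, changing a parity, creating a new small cut that violates a connectivity requirement, or forcing an outdegree that breaks the bound $\ell$). Getting a single family of gadgets that simultaneously establishes all the conditions, leaves the feasibility question untouched, and interacts cleanly with Proposition \ref{char} (which requires a unique dominating vertex $a$ with $\lambda_G(a,v)=d_G(v)$) is the delicate part — in particular one must be careful that the newly added vertices never become candidates for the role of $a$ and never lower $\lambda_G(a,v)$ for any original $v$. This is exactly why, as the text warns, "it takes a significant effort to conclude the hardness of SSUBWBO from the hardness of FSUBWBO", and I would expect the proof to proceed gadget by gadget, each time re-verifying the full list of invariants.
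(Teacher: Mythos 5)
Your high-level plan --- reduce from FSUBWBO, attach gadgets to force the extra SSUBWBO conditions, and verify equivalence of feasibility via Propositions \ref{ext} and \ref{char} --- does match the overall shape of the paper's argument, and your final step of adding pendant/dummy vertices and invoking Proposition \ref{ext} is essentially what the paper does in the $G_1 \to G_2$ stage (adding $a'$ and the set $B$ to secure conditions $(b)$ and $(d)$). But there is a genuine gap: you have not identified what the \emph{central} difference between FSUBWBO and SSUBWBO actually is, and consequently the gadgets you propose are far too weak to bridge it.

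Compare condition $(\zeta)$ of FSUBWBO with condition $(g)$ of SSUBWBO. In FSUBWBO the outdegree of the dominating vertex $a$ is genuinely constrained, $\ell(a)=k$ with $k$ possibly strictly less than $d_G(a)$; in SSUBWBO that constraint vanishes, $\ell(a)=d_G(a)$, so $\ell$ only bites on $V_3$. The whole difficulty of the reduction is therefore to \emph{encode the tight outdegree bound at $a$ into the graph structure itself}, so that the unconstrained SSUBWBO instance simulates the constrained FSUBWBO one. Small local gadgets built from pendant vertices, subdivided or parallel edges, and Proposition \ref{ext}-style forced orientations cannot do this: Proposition \ref{ext} only controls a single hanging vertex and changes degrees by a bounded amount, whereas one needs to absorb an arbitrary deficit $d_G(a)-k$ while simultaneously preserving $\lambda_G(a,v)=d_G(v)$ for every original $v$ (so that Proposition \ref{char} still applies), keeping parities intact, and not creating a competing dominating vertex. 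The paper achieves this in the $G_0 \to G_1$ stage via a bespoke $(\alpha,\beta)$-gadget $(W,X,Y)$ --- a modified grid of size polynomial in $\alpha+\beta$ with the carefully engineered cut properties $(1)$--$(6)$ of Section \ref{gadget} --- which is spliced in place of $\delta_{G_0}(a)$ so that well-balanced orientability of $G_1$ with no bound at $a$ is equivalent to well-balanced orientability of $G_0$ with $d^+_{\vec G_0}(a)\le k$ (Lemma \ref{01}, proved through Claims \ref{cutvergleich}--\ref{klein}). Your proposal never proposes, nor provides a substitute for, such a gadget, and without it steps (2)--(3) of your plan fail at the very first condition that actually distinguishes the two problems.
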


 After, we conclude the hardness of OCO from the hardness of SSUBWBO.
\smallskip

The rest of this section is structured as follows: In Section  \ref{gadget}, we introduce a gadget that allows us to conclude the hardness of SSUBWBO from the hardness of FSUBWBO which we do in Section \ref{firstsecond2}. Finally, in Section \ref{secondoptimal}, we conclude Theorem \ref{opthard}. 

\subsection{An important gadget}\label{gadget}
In this section, we deal with a gadget that will prove useful in Section \ref{firstsecond2}. We first define this gadget in terms of its important properties and then show its existence.

For two positive integers $\alpha,\beta$, an $(\alpha,\beta)$-gadget is a graph $W$ together with two disjoint, non-adjacent subsets $X=\{x_1,\ldots,x_\alpha\}$ and $Y=\{y_1,\ldots,y_\beta\}$ of $V(W)$ satisfying the following properties:
\begin{itemize}
\item $(1)$: $\lambda_{W}(u,v)=\min\{d_W(u),d_W(v)\}$ for all $\{u,v\}\subseteq V(W)$,
\item $(2)$: $d_W(v)=3$ for all $v \in X \cup Y$,
\item $(3)$: $d_W(v)=4$ for all $v \in V(W)-(X \cup Y)$,
\item $(4)$: for all $S \subseteq V(W)$ which are connected in $W$ with $V(W)-S \neq \emptyset, X \cap S \neq \emptyset$, and $Y \cap S \neq \emptyset$, we have $d_W(S)\geq |Y-S|+2$,
\item $(5)$: for all $S \subseteq V(W)$ with $Y \cap S =\emptyset$, we have $d_W(S)\geq \min\{|X \cap S|,|Y|\}$,
\item $(6)$: for every mapping $\phi:X \cup Y \rightarrow \{1,2\}$ with $|\{v \in X\cup Y|\phi(v)=1\}|=|\{v \in X\cup Y|\phi(v)=2\}|$, there is an orientation $\vec{W}$ of $W$ that satisfies:
\begin{itemize}
\item $(6i)$: $d_{\vec{W}}^+(v)=\phi(v)$ for all $v \in X \cup Y$,
\item $(6ii)$: $d_{\vec{W}}^+(v)=2$ for all $v \in V(G)-(X \cup Y)$,
\item $(6iii)$: $\lambda_{\vec{W}}(u,v)=\min\{d_{\vec{W}}^+(u),d_{\vec{W}}^-(v)\}$ for all $\{u,v\} \subseteq V(W)$.
\end{itemize}
\end{itemize}
\begin{Lemma}\label{gadexists}
For all positive integers $\alpha,\beta$ with $\alpha\equiv \beta (mod \text{ }2)$, there exists an $(\alpha,\beta)$-gadget whose size is polynomial in $\alpha+\beta$.
\end{Lemma}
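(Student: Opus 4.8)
The plan is to construct the $(\alpha,\beta)$-gadget explicitly by assembling small, well-understood building blocks and then verify the six properties. Since properties $(2)$ and $(3)$ demand that the special vertices in $X\cup Y$ have degree $3$ while all other vertices have degree $4$, and property $(1)$ demands that local edge-connectivity between every pair equals the minimum of their degrees, a natural starting point is to take the underlying graph to be a large highly-edge-connected ``core'' (a graph which is $4$-edge-connected, so that $(1)$ holds among core vertices) and then attach the $\alpha+\beta$ terminals of degree $3$ to this core in a balanced way. A convenient concrete choice: take the core to be obtained from a long cycle by duplicating every edge (a $4$-regular, $4$-edge-connected multigraph — or, if we insist on a simple graph, a suitable $4$-regular circulant-like graph on $\Theta(\alpha+\beta)$ vertices), and attach each terminal $v\in X\cup Y$ by three edges to three distinct consecutive core vertices. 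The size is then linear in $\alpha+\beta$, giving the claimed polynomial (indeed linear) bound.

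Next I would verify properties $(1)$ through $(5)$, which are all edge-connectivity statements about the undirected graph $W$. Property $(1)$ reduces, via Menger, to showing there is no ``bad'' cut: any $S$ with $d_W(S)<\min\{d_W(u),d_W(v)\}$ separating $u$ from $v$ is ruled out because the core is $4$-edge-connected and each terminal contributes its full degree $3$ across any cut that isolates it; one does a short case analysis on whether $S$ contains $0$, some, or all of a terminal's neighborhood, using that the three neighbors are spread out along the core. Properties $(4)$ and $(5)$ are sharper isoperimetric-type bounds: for a connected $S$ meeting both $X$ and $Y$ we need $d_W(S)\ge |Y-S|$, and for $S$ disjoint from $Y$ we need $d_W(S)\ge\min\{|X\cap S|,|Y|\}$. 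These will follow from the same ``terminals are attached via well-spread edges into a $4$-edge-connected core'' structure: a connected $S$ inside the core already has $d_W(S)\ge 4$ whenever it is a nontrivial core piece, and including or excluding terminals changes the cut size by a controlled amount; I would prove $(4)$ and $(5)$ by induction on $|V(W)-S|$ (or on the number of terminals outside $S$), peeling off one terminal at a time and tracking how $d_W(S)$ changes.

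Property $(6)$ is where the real work lies and is the step I expect to be the main obstacle. Given a balanced assignment $\phi$ of out-degrees $1$ or $2$ to the terminals, I must produce an orientation $\vec W$ with these prescribed out-degrees on $X\cup Y$, out-degree exactly $2$ everywhere else, and — crucially — the ``generic'' local connectivity identity $(6iii)$, namely $\lambda_{\vec W}(u,v)=\min\{d^+_{\vec W}(u),d^-_{\vec W}(v)\}$ for all pairs. The existence of \emph{some} orientation with the prescribed out-degrees is immediate from Hakimi's Proposition~\ref{hakimi} once one checks $i_W(S)\le\sum_{v\in S}\phi(v)$ for all $S$, and this inequality is essentially a restatement of the cut bounds $(4)$–$(5)$ together with the balancedness of $\phi$ (which forces $\sum_v\phi(v)=|E(W)|$). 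The hard part is engineering the orientation so that $(6iii)$ also holds; here I would build the core orientation first as a fixed strongly connected ``Eulerian-like'' orientation in which every core vertex has out-degree $2$ (possible since the core is $4$-regular and $4$-edge-connected, so it is even $2$-arc-connected by Nash-Williams), route the terminal edges consistently, and then argue that no orientation obtained this way can have a deficient directed cut: any $u\bar v$-set $S$ with $d^+_{\vec W}(S)<\min\{d^+_{\vec W}(u),d^-_{\vec W}(v)\}$ would, after applying Proposition~\ref{summe} to convert the out-degree deficiency into an undirected cut bound, contradict one of $(4)$, $(5)$, or the $4$-arc-connectivity of the oriented core. I expect this verification to require a careful but routine case analysis organized by the position of $S$ relative to the core and the terminals, and it is the bookkeeping in this final case analysis — rather than any single clever idea — that constitutes the bulk of the proof.
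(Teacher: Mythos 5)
Your construction is genuinely different from the paper's (a linear-size $4$-regular circulant core with degree-$3$ terminals attached vs.\ the paper's $\gamma\times\gamma$ grid with $\gamma=\Theta(\alpha+\beta)$), but the proposed core cannot satisfy properties $(4)$ and $(5)$, which is a fatal gap rather than a bookkeeping one. Both of those properties require certain cuts to have size growing \emph{linearly} in $\min\{\alpha,\beta\}$: $(5)$ asks $d_W(S)\ge\min\{|X\cap S|,|Y|\}$ for every $S$ disjoint from $Y$, and $(4)$ asks $d_W(S)\ge |Y-S|$ for every connected $S$ meeting both $X$ and $Y$. In a cycle-based (or circulant) core of size $\Theta(\alpha+\beta)$ with terminals attached to short blocks of consecutive core vertices, you can take $S$ to be an arc of the cycle containing all $X$-terminals and none of the $Y$-terminals; then $d_W(S)=O(1)$ (two ends of the arc, with bounded multiplicity), yet $\min\{|X\cap S|,|Y|\}=\min\{\alpha,\beta\}$ can be arbitrarily large, so $(5)$ fails. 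Similarly a short connected arc can touch one $Y$-vertex and one $X$-vertex while $|Y-S|$ stays large, breaking $(4)$. Being $4$-regular and $4$-edge-connected only gives cuts of size $\ge 4$; what is actually needed is an isoperimetric bound of the form $d_W(S)\gtrsim \min\{|(X\cup Y)\cap S|,|(X\cup Y)-S|\}$ (this is exactly Claim~\ref{cut2} in the paper), and a one-dimensional, linear-size core cannot supply it. The paper obtains it from a two-dimensional $\gamma\times\gamma$ grid (so \emph{quadratic} size, still polynomial) by placing $X$ along the first row and $Y$ along the last row: any connected set meeting both must touch every row, and since each column holds at most one terminal, the number of crossed columns bounds the cut from below against $|Y-S|$.

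Two secondary remarks. First, as written your construction violates property $(3)$: attaching three edges from a terminal to a $4$-regular core raises those core vertices' degrees above $4$, so the core must be re-engineered (and in the paper the ``boundary repair'' edges of the grid exist precisely to normalize all degrees to $3$ or $4$). Second, your plan for property $(6)$—apply Hakimi's theorem (Proposition~\ref{hakimi}) using the cut bounds for feasibility, then rule out deficient directed cuts by combining Proposition~\ref{summe} with an undirected cut bound—does parallel the paper's argument; but since the feasibility check and the $(6iii)$ verification both rest on the very cut bounds your core lacks, fixing the core is a prerequisite, not an independent issue.
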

\begin{proof}
Let $\alpha,\beta$ be positive integers with $\alpha\equiv \beta (mod \text{ }2)$. We now create an $(\alpha,\beta)$-gadget $W$. First, we let $V(W)=\{1,\ldots,\gamma\}\times \{1,\ldots,\gamma\}$ where $\gamma=\alpha+\beta+4$ if $\alpha$ is even and $\gamma=\alpha+\beta+5$ otherwise. We now add an edge linking  two vertices $(j_0,i_0)$ and $(j_1,i_1)$ whenever $|j_0-j_1|+|i_0-i_1|=1$. Next, for $j=1,\ldots,\gamma$, we add an edge linking $(j,1)$ and $(j,\gamma)$. Further, for $i=1,\ldots,\frac{1}{2}(\gamma-\alpha)$, we add an edge linking $(1,\alpha+2i-1)$ and $(1,\alpha+2i)$ and for $i=1,\ldots,\frac{1}{2}(\gamma-\beta)$, we add an edge linking $(\gamma,2i-1)$ and $(\gamma,2i)$. Observe that this creates some pairs of parallel edges. Now for $i=1,\ldots,\alpha$, let $x_i=(1,i)$ and for $i=1,\ldots,\beta$, let $y_i=(\gamma,\gamma-\beta+i)$. Let $X=\{x_1,\ldots,x_\alpha\}$ and $Y=\{y_1,\ldots,y_\beta\}$. This finishes the decription of $(W,X,Y)$. For an illustration, see Figure \ref{cfgz}.

\begin{figure}[h]\begin{center}
  \includegraphics[width=.7\textwidth]{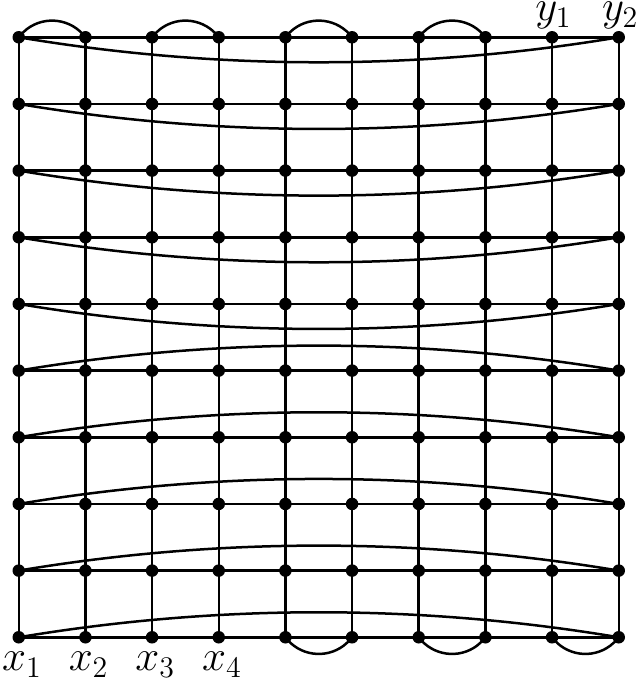} 
  \caption{A $(4,2)$-gadget.}\label{cfgz}
\end{center}
\end{figure}

We show in the following that $(W,X,Y)$ satisfies $(1)-(6)$. We first prove two claims on the sizes of several cuts of $W$ which are key to the proofs of $(1)-(6)$ after. For $i=1,\ldots,\gamma$, let $R_i=\{(i,1),\ldots(i,\gamma)\}$ and $C_i=\{(1,i),\ldots(\gamma,i)\}$. For some $S \subseteq V(W)$, we let $\mathcal{I}_S=\{i \in \{1,\ldots,\gamma\}|C_i \cap S \neq \emptyset\}$ and $\mathcal{J}_S=\{j \in \{1,\ldots,\gamma\}|R_j \cap S \neq \emptyset\}$.

\begin{Claim}\label{row}
Let $S \subseteq V(W)$. Then $d_W(S)\geq \min\{\gamma,|\mathcal{I}_S|+2|\mathcal{J}_S|,|\mathcal{I}_{V(W)-S}|+2|\mathcal{J}_{V(W)-S}|\}$. 
\end{Claim}
\begin{proof}
 If for every $j=1,\ldots,\gamma$, we have that both $R_j \cap S$ and $R_j-S$ are nonempty, as $W[R_j]$ is connected, we obtain that $W[R_j]$ contains an edge of $\delta_W(S)$ for $i=1,\ldots,\gamma$, so $d_W(S)\geq \gamma$. We may hence suppose that there is some $j_0\in \{1,\ldots,\gamma\}$ with $R_{j_0} \subseteq S$ or $R_{j_0} \cap S=\emptyset$. By a similar argument, we may suppose that there is some $i_0 \in \{1,\ldots,\gamma\}$ with $C_{i_0}\subseteq S$ or $C_{i_0} \cap S=\emptyset$. As $R_{j_0} \cap C_{i_0}$ is nonempty and by symmetry, we may suppose that $R_{j_0} \cup C_{i_0} \subseteq V(W)-S$. For every $i \in \mathcal{I}_S$, as $W[C_i]$ is connected, we obtain that $W[C_i]$ contains an edge of $\delta_W(S)$. For every $j \in \mathcal{J}_S$, as $W[R_j]$ is 2-edge-connected, we obtain that $W[R_j]$ contains two edges of $\delta_W(S)$. Hence the statement follows.
\end{proof}
 \begin{Claim}\label{cut2}
Let $\emptyset\neq S \subsetneq V(W)$. Then $d_W(S)\geq \min\{|(X \cup Y)\cap S|,|(X \cup Y)- S|\}+2$. Moreover, if neither $S \subseteq X \cup Y$ and $|(X \cup Y)\cap S|\leq |(X \cup Y)- S|$ nor $V(W)-S \subseteq X \cup Y$ and $|(X \cup Y)- S|\leq |(X \cup Y)\cap S|$ hold, then $d_W(S)\geq \min\{|(X \cup Y)\cap S|,|(X \cup Y)- S|\}+4$.
\end{Claim}
\begin{proof}
By Claim \ref{row} and $\gamma \geq \alpha+\beta+4\geq \min\{|(X \cup Y)\cap S|,|(X \cup Y)- S|\}+4$, we may suppose by symmetry that $d_W(S)\geq |\mathcal{I}_S|+2|\mathcal{J}_S|$. As for every $i \in \{1,\ldots,\gamma\}$, we have $|C_i \cap (X\cup Y)|\leq 1$ and there is at least one $j \in \{1,\ldots,\gamma\}$ with $S \cap R_j \neq \emptyset$, we obtain $d_W(S)\geq |\mathcal{I}_S|+2|\mathcal{J}_S|\geq |(X \cup Y)\cap S|+2\geq \min\{|(X \cup Y)\cap S|,|(X \cup Y)- S|\}+2$. Moreover, if there are distinct $j_1,j_2$ with $R_{j_1}\cap S \neq \emptyset$ and $R_{j_2}\cap S \neq \emptyset$, we obtain $d_W(S)\geq min\{|(X \cup Y)\cap S|,|(X \cup Y)- S|\}+4$. We may hence suppose that there is some $j^*$ with $S \subseteq R_{j^*}$. If $|S-(X\cup Y)|\geq 2$, we obtain $d_W(S)\geq |\mathcal{I}_S|+2|\mathcal{J}_S|=(|S|+2)+2=|S|+4\geq \min\{|(X \cup Y)\cap S|,|(X \cup Y)- S|\}+4$. Next suppose that $S-(X \cup Y)$ contains a single vertex $v$. If $S\cap (X \cup Y)\neq \emptyset$, then $v$ is linked  by at most one edge to $S \cap (X \cup Y)$. As $S-v$ is a real subset of $R_{j^*}$, we obtain $d_W(S-v)\geq |S-v|+2$ by construction.  By construction, this yields 
\begin{align*}
 d_W(S)&=d_W(S-v)+d_W(v)-2d_W(S-v,v)\\
&=(|S-v|+2)+4-2=|S\cap(X\cup Y)|+4\\
&\geq \min\{|(X \cup Y)\cap S|,|(X \cup Y)- S|\}+4.
\end{align*}
Next, if $S=\{v\}$, we have $d_W(S)=4\geq \min\{|(X \cup Y)\cap S|,|(X \cup Y)- S|\}+4$ by construction. 

Finally suppose that $S \subseteq X \cup Y$. If $|(X \cup Y)\cap S|> |(X \cup Y)- S|$, by $|(X \cup Y)\cap S|+|(X \cup Y)- S|\equiv |X \cup Y|\equiv \alpha+\beta\equiv 0\text{ } (mod \text{ }2)$, we obtain $\min\{|(X \cup Y)\cap S|,|(X \cup Y)- S|\}\leq |(X \cup Y)\cap S|-2$. This yields by construction $d_W(S)\geq |\mathcal{I}_S|+2|\mathcal{J}_S|=|(X \cup Y)\cap S|+2\geq \min\{|(X \cup Y)\cap S|,|(X \cup Y)- S|\}+4$. Hence the statement follows.
\end{proof}
We are now ready to prove that $W$ satisfies $(1)-(6)$.
\medskip

$(1):$ Let $u,v \in V(W)$ and let $S \subseteq V(W)$ be a $u\bar{v}$-set. By Claim \ref{row}, by $\gamma\geq 4 \geq \min\{d_W(u),d_W(v)\}$ and by symmetry, we may suppose that $d_W(S)\geq |\mathcal{I}_S|+2|\mathcal{J}_S|$. If $S$ contains at least two elements, we obtain $\max\{|\mathcal{I}_S|,|\mathcal{J}_S|\} \geq 2$ and hence $d_W(S)\geq |\mathcal{I}_S|+2|\mathcal{J}_S|\geq 4 \geq \min\{d_W(u),d_W(v)\}$. Otherwise, we have $d_W(S)=d_W(u)\geq\min\{d_W(u),d_W(v)\}$.
\medskip

$(2)$ and $(3)$ follow immediately by construction.
\medskip

$(4):$ Clearly, we have $\gamma\geq  |Y|+2\geq  |Y-S|+2$. Next, as $S$ is connected, we have $\mathcal{J}_S=\{1,\ldots,\gamma\}$ and hence $|\mathcal{I}_S|+2|\mathcal{J}_S|\geq \gamma \geq |Y-S|+2$. Further, as $|C_i \cap Y|\leq 1$ for every $i \in \{1,\ldots,\gamma\}$ and $V(W)-S \neq \emptyset$, we have $|\mathcal{I}_{V(W)-S}|+2|\mathcal{J}_{V(W)-S}|\geq |\mathcal{I}_{V(W)-S}|+2\geq  |Y-S|+2$. By Claim \ref{row}, we obtain $d_W(S)\geq \min\{\gamma,|\mathcal{I}_S|+2|\mathcal{J}_S|,|\mathcal{I}_{V(W)-S}|+2|\mathcal{J}_{V(W)-S}|\}\geq |Y-S|+2$.
\medskip

$(5):$ By Claim \ref{cut2}, we have $d_W(S)\geq \min\{|(X \cup Y)\cap S|,|(X \cup Y)- S|\}+2\geq \min\{|X\cap S|,|Y- S|\}=\min\{|X\cap S|,|Y|\}.$
\medskip

$(6):$ We define $\phi':V(W)\rightarrow \{1,2\}$ by $\phi'(v)=\phi(v)$ for all $v \in X \cup Y$ and $\phi'(v)=2$ for all $v \in V(W)-(X \cup Y)$.
Now consider some $S \subseteq V(G)$. 
By Claim \ref{cut2}, we have 
\begin{align*}
i_G(S)&=\frac{1}{2}\sum_{v \in S}d_{W[S]}(v)\\
&=\frac{1}{2}(\sum_{v \in S}d_{W}(v)-d_W(S))\\
&\leq \frac{1}{2}(4|S|-|(X \cup Y)\cap S|-\min\{|(X \cup Y)\cap S|,|(X \cup Y)- S|\})\\
&=2|S|-\min\{|(X \cup Y)\cap S|,\frac{1}{2}|X \cup Y|\}\\
&\leq 2|S|-|\{v \in S|\phi'(v)=1\}|\\
&=\sum_{v \in S}\phi'(v).
\end{align*}
Hence by Proposition \ref{hakimi}, we obtain that there is an orientation $\vec{W}$ of $W$ that satisfies $(6i)$ and $(6ii)$. We now show that $\vec{W}$ also satisfies $(6iii)$. Let $\{u,v\} \subseteq V(W)$ and let $S \subseteq V(W)$ be a $u\bar{v}$-set. By symmetry, we may suppose that $|(X \cup Y)\cap S|\leq |(X \cup Y)- S|$. As $d_{\vec{W}}^+(z)=d_{\vec{W}}^-(z)$ for all $z \in V(W)-(X \cup Y)$ and $d_{\vec{W}}^+(z)-d_{\vec{W}}^-(z)\geq -1$ for all $z \in X \cup Y$, by Proposition \ref{summe}, we have 
\begin{align*}
d_{\vec{W}}^+(S)-d_{\vec{W}}^-(S)\geq -|(X \cup Y)\cap S|\tag{$\ostar$}
\end{align*}
 and equality holds if and only if $d_{\vec{W}}^+(z)=1$ for all $z \in (X \cup Y)\cap S$. Further by Claim \ref{cut2} and the assumption, we have

\begin{align*}
d_{\vec{W}}^+(S)+d_{\vec{W}}^-(S)&\geq \min\{|(X \cup Y)\cap S|,|(X \cup Y)- S|\}+2\\
&=|(X \cup Y)\cap S|+2\tag{$\ostar\ostar$},
\end{align*}
and if equality holds, then by the assumption that $|(X \cup Y)\cap S|\leq |(X \cup Y)- S|$, we have $S \subseteq X\cup Y$. 

Summing $(\ostar)$ and $(\ostar \ostar)$, we obtain 

\begin{align*}
d_{\vec{W}}^+(S)\geq 1\tag{$\ostar\ostar\ostar$}.
\end{align*}

If equality holds in $(\ostar\ostar\ostar)$, then equality holds in $(\ostar)$ and $(\ostar \ostar)$, which yields $S \subseteq X \cup Y$ and $d_{\vec{W}}^+(z)=1$ for all $z \in (X \cup Y)\cap S$. In particular, we obtain $d_{\vec{W}}^+(S)\geq 1= d_{\vec{W}}^+(u)\geq\min\{d_{\vec{W}}^+(u),d_{\vec{W}}^-(v)\}$. Otherwise, we have $d_{\vec{W}}^+(S)\geq 2\geq\min\{d_{\vec{W}}^+(u),d_{\vec{W}}^-(v)\}$. This finishes the proof.
\end{proof}

\subsection{Adapted versions of UBWBO}\label{firstsecond2}

We here conclude the hardness of SSUBWBO from FSUBWBO. We now give the technical definitions of these problems.

We denote by FSUBWBO the restriction of UBWBO to instances $(G,\ell)$ for which there is a partition $(V_3,V_3',V_4,a)$ of $V(G)$ satisfying the following properties:
\begin{itemize}
\item $(\alpha)$: $a$ is a single vertex with $d_G(a)> d_G(v)$ and $\lambda_G(a,v)=d_G(v)$ for all $v \in V(G)-a$,
\item $(\beta)$: $d_G(v)=3$ for all $v \in V_3 \cup V_3'$,
\item $(\gamma)$: for all $v \in V_4$, we have $d_G(v)\geq 4$ and $d_G(v)$ is even,
\item $(\delta)$: for all $v \in V_3$, we have $N_G(v)\subseteq V_4\cup a$,
\item $(\epsilon)$: for all $v \in V_3'$, we have $N_G(v)\cap (V_4 \cup a)\neq \emptyset$,
\item $(\zeta)$: $\ell(a)=k$ for some $k \geq \frac{1}{2}d_G(a)$, $\ell(v)=1$ for all $v \in V_3$ and $\ell(v)=d_G(v)$ for all $v \in V_3'\cup V_4$,
\item $(\eta)$: if $(G,\ell)$ is a positive instance of UBWBO, then there is a well-balanced orientation $\vec{G}$ of $G$ with $d_{\vec{G}}^+(v)=1$ for all $v \in V_3$ and $d_{\vec{G}}^+(a)=k$.
\end{itemize}
The following hardness result immediately follows from the construction in the hardness proof of UBWBO in \cite{hs}.

\begin{Proposition}
FSUBWBO is NP-complete.
\end{Proposition}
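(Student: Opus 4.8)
The statement to prove is that FSUBWBO is NP-complete, asserted to follow immediately from the construction in the hardness proof of UBWBO in the reference [hs]. Since this is claimed to be an "immediate consequence" of a known construction, the proof plan is essentially to revisit that construction and verify it already outputs instances of the restricted form.

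\textbf{Membership in NP.} First I would observe that FSUBWBO is in NP: a certificate is an orientation $\vec{G}$, and one can check in polynomial time that it respects the outdegree bounds $\ell$ and, using max-flow computations for each ordered pair $(u,v)$, that $\min\{\lambda_{\vec{G}}(u,v),\lambda_{\vec{G}}(v,u)\}\geq\lfloor\frac12\lambda_G(u,v)\rfloor$ for all pairs. The partition $(V_3,V_3',V_4,a)$ is part of the restricted instance family and need not be guessed; indeed one checks that an input conforms to the promise by verifying $(\alpha)$--$(\eta)$, where $(\alpha)$, $(\beta)$, $(\gamma)$, $(\delta)$, $(\epsilon)$, $(\zeta)$ are polynomially checkable directly and $(\eta)$ is a structural guarantee of the reduction rather than something a verifier must test.

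\textbf{Hardness.} The plan is to take the reduction $f$ from [hs] witnessing NP-hardness of UBWBO (say from some NP-hard source problem such as the one used there) and argue that, up to trivial modifications, every output instance $(G,\ell)=f(I)$ already admits a partition $(V_3,V_3',V_4,a)$ satisfying $(\alpha)$--$(\eta)$. Concretely I would: (i) identify the distinguished high-degree ``apex'' vertex of the construction in [hs] and set $a$ to be it, checking that it satisfies $(\alpha)$, i.e. $d_G(a)>d_G(v)$ and $\lambda_G(a,v)=d_G(v)$ for every other $v$ — this is typically built in so that Proposition \ref{char} applies and well-balancedness is controlled through the cuts separating $a$ from the rest; (ii) classify the remaining vertices by degree, letting $V_3\cup V_3'$ be the degree-$3$ vertices and $V_4$ the rest, after first subdividing or duplicating edges as needed to make every non-apex vertex of degree $\neq 3$ have even degree (this preserves local edge-connectivities up to the relevant floors and preserves the answer), thereby securing $(\beta)$ and $(\gamma)$; (iii) split the degree-$3$ vertices into $V_3$ (those with all neighbours in $V_4\cup a$) and $V_3'$ (the rest), which makes $(\delta)$ and $(\epsilon)$ hold by definition; (iv) set $\ell(a)=k$ for the value $k\ge\frac12 d_G(a)$ used in [hs], $\ell(v)=1$ on $V_3$, and $\ell(v)=d_G(v)$ elsewhere (a vacuous bound), giving $(\zeta)$; if the original $\ell$ in [hs] put bounds on degree-$3$ vertices not in $V_3$, I would absorb those vertices into $V_3$ or argue the bound is inessential. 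Finally $(\eta)$ — that a positive instance has a \emph{canonical} well-balanced orientation with $d^+_{\vec G}(v)=1$ on $V_3$ and $d^+_{\vec G}(a)=k$ — is exactly the kind of normalization statement proved inside [hs]: by Proposition \ref{ext}, a degree-$3$ vertex $v\in V_3$ with $\ell(v)=1$ forced to out-degree $\le 1$ in any valid orientation actually has out-degree exactly $1$ (since $\lfloor 3/2\rfloor=1\le d^+(v)\le\lceil 3/2\rceil=2$ and the upper bound forces $d^+(v)=1$), and the outdegree at $a$ can be pushed up to $k$ by reversing suitable paths without destroying well-balancedness, using that $a$ dominates all cuts via $(\alpha)$.

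\textbf{Main obstacle.} The one genuinely delicate point is verifying $(\eta)$ and the parity/structural conditions $(\gamma)$ simultaneously with not changing the answer: edge subdivisions to fix parities must be done so that the relevant local edge-connectivity values (and hence the floors appearing in well-balancedness) are unchanged, and so that the new degree-$2$ or higher vertices land correctly in the partition. I expect this to amount to a careful but routine audit of the gadgets of [hs], which is why the paper states the result follows ``immediately'' from that construction; the honest version of the proof is to cite [hs] for the reduction and its normalization lemmas and then spell out that the output already meets $(\alpha)$--$(\eta)$ after the cosmetic adjustments above. I would therefore present the argument as: ``Inspecting the reduction of [hs], the produced instance $(G,\ell)$ together with the partition described above satisfies $(\alpha)$--$(\eta)$; combined with membership in NP, this proves the claim.''
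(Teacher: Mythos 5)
Your proposal takes essentially the same route as the paper, which gives no proof at all beyond a single sentence asserting that the result "immediately follows from the construction in the hardness proof of UBWBO in [hs]"; your plan is likewise to cite that reduction and audit its output against conditions $(\alpha)$--$(\eta)$. Two of the "cosmetic adjustments" you propose would actually be unsafe, however, and worth flagging: you cannot simply "absorb" a degree-$3$ vertex into $V_3$, since $(\delta)$ is a structural condition ($N_G(v)\subseteq V_4\cup a$) that either holds or does not; and your step (iii) leaves a gap, because a degree-$3$ vertex with no neighbour in $V_4\cup a$ belongs to neither $V_3$ (fails $(\delta)$) nor $V_3'$ (fails $(\epsilon)$), so your $(V_3,V_3')$ is not automatically a partition of the degree-$3$ vertices. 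Likewise, subdividing edges to repair parities introduces degree-$2$ vertices that fit none of $V_3,V_3',V_4$. The honest reading of the paper is that no such modifications are needed: the [hs] construction already produces instances satisfying $(\alpha)$--$(\eta)$ verbatim (including the normalization $(\eta)$, which is established inside [hs] by the same $\ell$-forcing argument you sketch via Proposition \ref{ext}). Your proof would be fine if you dropped the hypothetical repairs and instead verified, by inspection of [hs], that each condition holds as stated.
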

We now consider the following even more restricted version of UBWBO.
We denote by SSUBWBO the restriction of UBWBO to instances $(G,\ell)$ for which there is a partition $(V_3,V_3',V_4,\{a,a'\})$ of $V(G)$ satisfying the following properties:
\begin{itemize}
\item $(a)$: $a$ is a single vertex with $d_G(a)> d_G(v)$ and $\lambda_G(a,v)=d_G(v)$ for all $v \in V(G)-a$,
\item $(b)$: $a'$ is a single vertex with $d_G(a')=3=d_G(a,a')$,
\item $(c)$: $d_G(v)=3$ for all $v \in V_3 \cup V_3'$,
\item $(d)$: for all $v \in V_4$, we have $d_G(v)\geq 4$, $d_G(v)$ is even and $|\{u \in V(G)|d_G(u)=d_G(v)\}|\geq 3$,
\item $(e)$: for all $v \in V_3$, we have $N_G(v)\subseteq V_4$,
\item $(f)$: for all $v \in V_3'$, we have $N_G(v)\cap V_4\neq \emptyset$,
\item $(g)$: $\ell(v)=1$ for all $v \in V_3$ and $\ell(v)=d_G(v)$ for all $v \in V_3'\cup V_4 \cup \{a,a'\}$.
\end{itemize}

The rest of Section \ref{firstsecond} is concerned with proving  Lemma \ref{2hard} using a reduction from FSUBWBO.
\medskip

 Let $(G_0,\ell_0)$ be an instance of FSUBWBO and let $(V_3,V_3',V_4,a)$ be a partition of $V(G_0)$ as described in the definition of FSUBWBO. We will transform $(G_0,\ell_0)$ into an instance of SSUBWBO in two steps. We first create an instance $(G_1,\ell_1)$ of UBWBO which is equivalent to $(G_0,\ell_0)$ and has the main additional feature that the degree constraint only takes into account vertices of degree 3. Proving the properties of $(G_1,\ell_1)$ is the main technical part of Section \ref{firstsecond2}. After, it is not difficult to transform $(G_1,\ell_1)$ into an equivalent instance $(G_2,\ell_2)$ of SSUBWBO.

For convenience, we let $\mu=\max\{d_{G_0}(v): v \in V_4\}$ and we let $b \in V_4$ be a vertex with $d_{G_0}(b)=\mu$. We now create $G_1$ from $G_0$ in the following way: We first delete all the edges in $\delta_{G_0}(a)$ and add a disjoint $(2k+\mu-d_{G_0}(a),d_{G_0}(a))$-gadget $(W,X,Y)$ whose size is polynomial in the size of $G_0$. Observe that such a gadget exists as $k \geq \frac{1}{2}d_{G_0}(a), 2k+\mu-d_{G_0}(a)+d_{G_0}(a)\equiv \mu \equiv d_{G_0}(b)\equiv 0 \text{ }(mod \text{ } 2)$ by $(\gamma)$ and by Lemma \ref{gadexists}. We now add an edge linking $a$ and $x$ for all $x \in X$ and for every edge $av \in \delta_{G_0}(a)$, we add an edge $yv$ for some $y \in Y$. We do this in a way that we add exactly one new incident edge to every $y \in Y$ which is possible as $|Y|=d_{G_0}(a)$. This finishes the description of $G_1$. We further define $\ell_1:V(G_1)\rightarrow \mathbb{Z}_{\geq 0}$ by $\ell_1(v)=1$ for all $v \in V_3$ and $\ell_1(v)=d_G(v)$ for all $v \in V_3'\cup V_4 \cup \{a,a'\}$. We now give a result that will be useful for proving several of the important properties of $G_1$.

\begin{Claim}\label{cutvergleich}
Let $S \subseteq V(G_1)$ be a set that is connected in $G_1$ with $a \in S$. Then either $d_{G_1}(S)\geq d_{G_0}(S-V(W))$ or $S \subseteq (V(W)-Y)\cup a$ and $d_{G_1}(S)\geq \min\{d_{G_0}(a),d_{G_1}(a)\}$.
\end{Claim}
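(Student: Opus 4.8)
The plan is to analyze the structure of a connected set $S \subseteq V(G_1)$ with $a \in S$ by separating its interaction with the gadget $W$ from its interaction with the rest of $G_1$, which is essentially a copy of $G_0$ with $a$'s edges rerouted through $W$. The key observation is that every edge of $G_1$ is either an edge of $W$, an edge of $G_0 - \delta_{G_0}(a)$, an edge of the form $ax$ with $x \in X$, or an edge of the form $yv$ replacing an old edge $av \in \delta_{G_0}(a)$ with $v \in V(G_0)-a$ and $y \in Y$. So $\delta_{G_1}(S)$ decomposes along these four types, and I would track how much each contributes.

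First I would set $S_0 = (S \cap V(G_0)) = S - V(W)$ (noting $a \in S_0$) and $S_W = S \cap V(W)$. The edges of $\delta_{G_1}(S)$ that lie entirely in $G_0-\delta_{G_0}(a)$ are exactly the edges of $\delta_{G_0}(S_0)$ that are not incident to $a$; each edge $av \in \delta_{G_0}(S_0)$ incident to $a$ (so $v \notin S_0$) has been replaced in $G_1$ by an edge $yv$ for some $y \in Y$, and this edge $yv$ lies in $\delta_{G_1}(S)$ precisely when $y \notin S$. So the ``$G_0$-part'' of $\delta_{G_1}(S)$ has size at least $d_{G_0}(S_0) - |\{y \in Y : y \in S\}| = d_{G_0}(S_0) - |Y \cap S|$ — here I must be slightly careful because distinct edges $av$ may be rerouted to distinct vertices of $Y$ (by construction each $y \in Y$ receives exactly one new incident edge), so losing one boundary edge of $S_0$ costs exactly one vertex of $Y$ being inside $S$. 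Meanwhile, the contribution of edges inside $W$, namely $\delta_W(S_W)$, plus the edges $ax$ with $x \in X \setminus S$, is nonnegative. The rough inequality I want is therefore $d_{G_1}(S) \ge d_{G_0}(S_0) - |Y \cap S| + d_W(S_W) + |X \setminus S|$, or something close to it after bookkeeping.

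Now I would split into cases according to whether $Y \cap S = \emptyset$ or not. If $Y \cap S = \emptyset$, then no rerouted edge is lost, so $d_{G_1}(S) \ge d_{G_0}(S_0)$ directly (the $W$-edges and $ax$-edges only help), giving the first alternative of the claim. If $Y \cap S \neq \emptyset$, then since $S$ is connected and $a \in S$ and $a$ is joined only to $X$ inside $W$, the set $S_W$ contains a vertex of $X$ (to connect through to $Y$) — so $S_W$ is a connected-in-$W$ (after possibly restricting to the component of $W[S_W]$ reachable from $X\cap S$... I need to be a little careful and may want to pass to the subset of $S_W$ that is connected in $W$ and meets both $X$ and $Y$) set meeting both $X$ and $Y$, and property $(4)$ of the gadget gives $d_W(S_W) \ge |Y - S_W| \ge |Y| - |Y \cap S|$. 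Plugging this into the rough inequality, the $-|Y\cap S|$ from the lost rerouted edges is compensated: $d_{G_1}(S) \ge d_{G_0}(S_0) - |Y \cap S| + (|Y| - |Y\cap S|) + \dots$, and since $|Y| = d_{G_0}(a)$ and $|Y \cap S| \le |Y|$, this should still land at $\ge d_{G_0}(S_0)$ in the generic situation. The exceptional case where this bookkeeping fails is exactly when $S$ sits almost entirely inside the gadget — when $S \subseteq (V(W)-Y) \cup a$, so $S_0 = \{a\}$ and $d_{G_0}(S_0) = d_{G_0}(a)$ is not the relevant quantity; there I would instead argue directly that $d_{G_1}(S)$ is controlled by the cut structure of $W$ together with the $ax$-edges, invoking the gadget's cut bounds (Claim 2, i.e.\ \verb|\ref{cut2}|, applied to $S_W$, together with property $(5)$ for sets avoiding $Y$) to conclude $d_{G_1}(S) \ge \min\{d_{G_0}(a), d_{G_1}(a)\}$, the second alternative.

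The main obstacle I expect is the careful edge-accounting at the $X$--$a$ interface and the $Y$--$G_0$ interface simultaneously: one has to make sure that an edge $yv$ is counted once, that the ``one new edge per $y\in Y$'' structure is used so that the number of rerouted edges crossing out of $S$ is exactly $|Y\cap S|$ at worst (not more), and that when $Y \cap S \neq \emptyset$ but $X \cap S = \emptyset$ one cannot in fact have $a \in S$ with $S$ connected — so that case is vacuous, and property $(4)$ applies cleanly. The second delicate point is isolating precisely the degenerate regime $S \subseteq (V(W)-Y)\cup a$ and checking that outside it the generic bound $d_{G_1}(S) \ge d_{G_0}(S_0)$ really does hold with the constants as given (using $\gamma$-type parity/degree facts and $|Y| = d_{G_0}(a)$); everything else is routine once the decomposition of $\delta_{G_1}(S)$ is set up correctly.
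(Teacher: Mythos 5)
Your decomposition of $\delta_{G_1}(S)$ into four edge-types and your choice of tools (gadget property $(4)$ when $S$ reaches $Y$, property $(5)$ when it does not) match the paper's argument, but there is a sign error in the edge accounting that cascades into a mis-assignment of which case yields which alternative.

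The error is in the sentence ``this edge $yv$ lies in $\delta_{G_1}(S)$ precisely when $y \notin S$.'' By assumption $av\in\delta_{G_0}(S_0)$ with $a\in S_0$, so $v\notin S_0$; since $v\in V(G_0)-a$ we also have $v\notin S$. The rerouted edge $yv$ therefore lies in $\delta_{G_1}(S)$ precisely when $y\in S$ (one endpoint inside, one outside), not when $y\notin S$. Consequently the ``lost'' edges are those with $y\notin S$, and the correct rough bound is $d_{G_1}(S)\ge d_{G_0}(S_0)-|Y-S|+d_W(S_W)+|X-S|$, not $-|Y\cap S|$. With this correction, property $(4)$ compensates exactly: when $S$ meets $Y$, a component $S'$ of $W[S\cap V(W)]$ meets both $X$ and $Y$, and $d_W(S\cap V(W))\ge d_W(S')\ge|Y-S'|\ge|Y-S|$ kills the loss term cleanly, giving $d_{G_1}(S)\ge d_{G_0}(S_0)$ without any exceptions. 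In your version, the $-|Y\cap S|$ ``loss'' plus the bound $d_W(S_W)\ge|Y|-|Y\cap S|$ yields $d_{G_0}(S_0)+|Y|-2|Y\cap S|+|X-S|$, which can be below $d_{G_0}(S_0)$ (e.g.\ when $Y\cup X\subseteq S$) — that is the ``bookkeeping failure'' you flagged, but it is an artifact of the sign error, not a real phenomenon.

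As a consequence, the cases are swapped. When $Y\cap S=\emptyset$, connectivity of $S$ and $a\in S$ force $S-V(W)=\{a\}$, so in fact $Y\cap S=\emptyset$ is \emph{equivalent} to $S\subseteq(V(W)-Y)\cup a$; this is the exceptional regime where you must use property $(5)$ (and count the $ax$ edges crossing out) to reach $\min\{d_{G_0}(a),d_{G_1}(a)\}$, and one cannot hope for $d_{G_1}(S)\ge d_{G_0}(S_0)=d_{G_0}(a)$ since all rerouted edges are lost while only $|X|=d_{G_1}(a)$ new ones at $a$ are available. Conversely, when $Y\cap S\neq\emptyset$ you are never in the exceptional regime, and the first alternative holds with no caveats. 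So: same decomposition, same gadget properties, same case split — but you need to flip the sign (replace $|Y\cap S|$ by $|Y-S|$ throughout) and with it the conclusions attached to the two branches.
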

\begin{proof}
First suppose that $S \subseteq (V(W)-Y)\cup a$. By $(5)$, we obtain $d_W(S\cap V(W))\geq \min \{|X \cap S|,|Y|\}$. If $d_W(S\cap V(W))\geq |X \cap S|$, we obtain by construction that $d_{G_1}(S)\geq d_{G_1}(a,V(W)-S)+d_W(S\cap V(W))\geq |X - S|+|X \cap S|=|X|=d_{G_1}(a)\geq \min\{d_{G_0}(a),d_{G_1}(a)\}$. Otherwise, we have $d_{G_1}(S)\geq d_W(S\cap V(W))\geq |Y|=d_{G_0}(a)\geq \min\{d_{G_0}(a),d_{G_1}(a)\}$. 

Now suppose that $S -((V(W)-Y)\cup a)\neq \emptyset$. As $S$ is connected in $G_1$  and by construction, we obtain that there is some $S'\subseteq V(W)\cap S$ with $X \cap S' \neq \emptyset$ and $Y \cap S' \neq \emptyset$ such that $W[S']$ is a component of $W[V(W)\cap S]$. By $(4)$, we obtain $d_W(V(W)\cap S)\geq d_W(S')\geq |Y-S'| \geq |Y-S|$. Let $Y_1$ be the set of vertices in $Y \cap S$ whose unique neighbor in $V(G_1)-(V(W)\cup a)$ is not contained in $S$. By construction, we obtain 
\begin{align*}
d_{G_1}(S)&\geq d_W(V(W)\cap S)+d_{G_1}(Y_1,V(G_1)-(V(W)\cup S))+d_{G_1-(V(W)\cup a)}(S-(V(W)\cup a))\\
&\geq |Y-S|+|Y_1|+d_{G_0-a}(S-(V(W)\cup a))\\
&\geq d_{G_0}(a,V(G_0)-S)+d_{G_0-a}(S-(V(W)\cup a))\\
&=d_{G_0}(S-V(W)).
\end{align*}
\end{proof}

The next result is a multi purpose one. First, it corresponds to one of the desired properties of $G_2$ and hence is important in its own respect and secondly, it will be used in the forthcoming proofs.

\begin{Claim}\label{cong1}
For all $v \in V(G_1)-a$, we have $\lambda_{G_1}(a,v)=d_{G_1}(v)$.
\end{Claim}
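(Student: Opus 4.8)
Both inequalities are needed. For $\lambda_{G_1}(a,v)\le d_{G_1}(v)$, note that $V(G_1)\setminus v$ is an $a\bar v$-set of degree $d_{G_1}(v)$. For the reverse inequality, let $S$ be an $a\bar v$-set minimising $d_{G_1}(S)$; replacing $S$ by the vertex set of the component of $G_1[S]$ containing $a$ does not increase $d_{G_1}(S)$ and keeps $S$ an $a\bar v$-set, so we may assume that $G_1[S]$ is connected and invoke Claim~\ref{cutvergleich}. I will use freely the following facts, all immediate from the construction and the definition of FSUBWBO: $d_{G_1}(a)=|X|=2k+\mu-d_{G_0}(a)\ge\mu$ (as $k\ge\frac12 d_{G_0}(a)$); $d_{G_0}(a)>\mu$ (by $(\alpha)$ applied to $b$); $\mu\ge4$ (by $(\gamma)$); $d_{G_1}(u)=d_{G_0}(u)\le\mu$ for every $u\in V(G_0)\setminus a$ (each $au$-edge of $G_0$ is replaced by exactly one $Y$-edge at $u$, and $d_{G_0}(u)<d_{G_0}(a)$ with $d_{G_0}(u)\le\mu$ whether $u\in V_4$ or $u\in V_3\cup V_3'$); and $d_{G_1}(u)=4$ for every $u\in V(W)$ (of degree $3$ or $4$ in $W$ by $(2)$ and $(3)$, plus one new incident edge exactly for $u\in X\cup Y$). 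In particular $d_{G_1}(v)\le\mu$ for every $v\ne a$.

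Apply Claim~\ref{cutvergleich}. If its second alternative holds, then $S\subseteq(V(W)\setminus Y)\cup a$ and $d_{G_1}(S)\ge\min\{d_{G_0}(a),d_{G_1}(a)\}\ge\mu\ge d_{G_1}(v)$, and we are done. If its first alternative holds, write $S_0:=S-V(W)$, so $a\in S_0$ and $d_{G_1}(S)\ge d_{G_0}(S_0)$. If $v\in V(G_0)\setminus a$, then $S_0$ is an $a\bar v$-set of $G_0$, so by $(\alpha)$ we get $d_{G_1}(S)\ge d_{G_0}(S_0)\ge\lambda_{G_0}(a,v)=d_{G_0}(v)=d_{G_1}(v)$, again finishing the proof.

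The remaining case is $v\in V(W)$ under the first alternative, where $d_{G_1}(v)=4$ and we must show $d_{G_1}(S)\ge4$. If $S\cap V(W)=\emptyset$, then all $|X|\ge\mu\ge4$ edges between $a$ and $X$ lie in $\delta_{G_1}(S)$. Otherwise $\emptyset\ne S\cap V(W)\subsetneq V(W)$ (proper because $v\notin S$), so $\delta_{G_1}(S)$ contains the $W$-cut $\delta_W(S\cap V(W))$, which has at least $3$ edges because $W$ is $3$-edge-connected by $(1)$ together with $(2)$ and $(3)$. If $X\not\subseteq S$, this is augmented by an $a$-$X$ edge (not an edge of $W$) in $\delta_{G_1}(S)$, giving $d_{G_1}(S)\ge4$. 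If $X\subseteq S\cap V(W)$ and $Y\cap S=\emptyset$, property $(5)$ applied to $S\cap V(W)$ gives $d_W(S\cap V(W))\ge\min\{|X|,|Y|\}\ge4$. Finally, if $X\subseteq S\cap V(W)\subsetneq V(W)$ and $Y\cap S\ne\emptyset$, one tracks the edges from $Y\cap S$ to $V(G_0)$ as in the proof of Claim~\ref{cutvergleich}: either one of these edges leaves $S$ and supplies a fourth crossing edge disjoint from $\delta_W(S\cap V(W))$, or all of them stay inside $S$, forcing $|S_0|\ge2$, in which case the bound $d_{G_1}(S)\ge d_{G_0}(S_0)$ can be combined with properties $(4)$ and $(5)$ of the gadget to reach $4$.

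The only genuinely delicate point is this last subcase, where $S$ straddles $W$ and $G_0$ and contains all of $X$: there the crude estimate ``$3$ edges from $\delta_W$ plus one $a$-$X$ edge'' is not available, and one must argue that the edges joining $W$ to $G_0$ still force a fourth crossing edge. Every other case is one or two lines and uses only the trivial cut bound, the $3$-edge-connectivity of $W$ coming from $(1)$, property $(5)$, and property $(\alpha)$ through Claim~\ref{cutvergleich}.
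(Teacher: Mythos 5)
Your reduction to a connected $S$ and your split into the two alternatives of Claim~\ref{cutvergleich} mirrors the paper's handling of the $v\in V(G_0)$ case, and that part is correct. However, the $v\in V(W)$ case has a real gap in your final subcase ($X\subseteq S\cap V(W)\subsetneq V(W)$, $Y\cap S\neq\emptyset$). Take $S=V(G_1)\setminus\{v\}$ with $v\in V(W)\setminus(X\cup Y)$ an interior vertex. Then $S$ is connected, $S_0=S-V(W)=V(G_0)$ so $d_{G_0}(S_0)=0$, $X\subseteq S$, $Y\subseteq S$, and every edge joining $Y$ to $V(G_0)$ has both endpoints in $S$, so you are in your ``all of them stay inside $S$'' branch. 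There $d_{G_1}(S)\geq d_{G_0}(S_0)=0$ is useless, property $(4)$ gives $d_W(S\cap V(W))\geq |Y\setminus S|=0$, and property $(5)$ is inapplicable to $S\cap V(W)$ (it meets $Y$) and vacuous on $V(W)\setminus S$ (it misses $X$). So nothing you invoke gets you above $3$, while the actual cut is exactly the four $W$-edges at $v$; reaching $4$ here requires noticing that \emph{all} vertices of $V(W)\setminus S$ have degree $4$ in $W$ and invoking property $(1)$ on a pair of interior vertices, not on generic $3$-edge-connectivity of $W$.

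This is exactly where the paper diverges from your route: after disposing of the case that some $v'\in V_4$ lies outside $S$, the paper argues that connectivity of $S$ forces an interior vertex $z\in S\cap(V(W)\setminus(X\cup Y))$, and then uses $\lambda_W(v,z)=\min\{d_W(v),d_W(z)\}=4$ from $(1)$ when $v$ is interior (and, when $v\in X\cup Y$, an explicit extra edge of $\delta_{G_1}(S)$ that leaves $W$). Your ``$\geq 3$ from $3$-edge-connectivity plus one extra crossing edge'' scheme cannot close the case in which $V(W)\setminus S$ consists only of interior vertices and $V(G_0)\subseteq S$, because then no extra edge exists and $3$ is all your method yields.
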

\begin{proof}
First consider some $v \in V(G_0)-a$ and let $S\subseteq V(G_1)$ be an $a\bar{v}$-set. Clearly, we may suppose that $S$ is connected in $G_1$. By Claim \ref{cutvergleich}, construction, $(\alpha),k \geq \frac{1}{2}d_{G_0}(a)$ and the definition of $\mu$, we obtain 
\begin{align*}
d_{G_1}(S)&\geq \min\{d_{G_0}(S-V(W)),d_{G_0}(a),d_{G_1}(a)\}\\
&\geq \min\{\lambda_{G_0}(a,v),d_{G_0}(a),2k+\mu-d_{G_0}(a)\}\\
&\geq \min\{d_{G_0}(v),d_{G_0}(a),\mu\}\\
&=d_{G_0}(v)\\
&=d_{G_1}(v).
\end{align*}

Now consider some $v \in V(W)$ and let $S \subseteq V(G_1)$ be an $a\bar{v}$-set. We may suppose that $S$ is connected in $G_1$. If $V_4-S$ contains a vertex $v_1$, by the above, $(\gamma),(2)$, and $(3)$, we obtain  $d_{G_1}(S)\geq \lambda_{G_1}(a,v_1)\geq d_{G_1}(v_1)\geq 4 \geq d_{G_1}(v)$. We may hence suppose that $V_4 \subseteq S$. As $S$ is connected in $G_1$, as $X$ and $Y$ are disjoint and non-adjacent, and by construction, we obtain that there is some $z \in S\cap (V(W)-(X \cup Y))$. If there is some  $v_2 \in V(W)-(X \cup Y \cup S)$, we obtain by $(1),(3)$ and construction that $d_{G_1}(S)\geq \lambda_{G_1}(v_2,z)\geq \lambda_{W}(v_2,z)\geq \min\{d_W(v_2),d_W(z)\}=4=d_{G_1}(v)$.

 We may hence suppose that $V(W)-(X \cup Y \cup S)=\emptyset$. Next, if there is some $v_3 \in X-S$, we obtain by construction, $(1),(2)$, and $(3)$ that $d_{G_1}(S)\geq d_W(V(W)\cap S)+|\{av_3\}|\geq 3+1=4= d_{G_1}(v)$. We hence may suppose that $V(W)-(Y \cup S)=\emptyset$.

Next suppose that  $|Y-S|\geq 2$. If $|Y \cap S|=\emptyset$, by $(5),X \subseteq S,(\alpha),k \geq \frac{1}{2}d_{G_0}(a),(2)$, and $(3)$, we obtain that $d_{G_1}(S)\geq d_W(V(W)\cap S)\geq \min\{|X \cap S|, |Y|\}=\min\{|X|,|Y|\}\geq \min\{d_{G_0}(a),d_{G_1}(a)\}\geq \min\{d_{G_0}(a),\mu\} \geq 4\geq d_{G_1}(v)$. Otherwise, it follows by $(4),(2)$, and $(3)$ that $d_{G_1}(S)\geq d_W(V(W)\cap S)\geq |Y-S|+2\geq 4= d_{G_1}(v)$.

We may hence suppose that $V(W)-S=\{v\}$. Let $v_4$ be the unique neighbor of $v$ in $G_1$ which is not contained in $V(W)$. If $v_4 \in S$, we obtain by construction, $(2)$, and $(3)$ that $d_{G_1}(S)\geq d_W(V(W)\cap S)+|\{vv_4\}|=3+1=4=d_{G_1}(v)$.

We may hence suppose that $v_4 \in V(G_1)-S$. By $(\alpha),(\beta),(\gamma)$ and the above, we have $d_{G_1}(V(G_1)-(S\cup v))\geq \lambda_{G_1}(a,v_4)= d_{G_1}(v_4)\geq 3$. As $V(W)-S=\{v\}$, by $(2)$ and $(3)$, this yields $d_{G_1}(S)=d_{G_1}(V(G_1)-S)=d_{G_1}(V(G_1)-(S\cup v))+d_{G_1}(v)-2d_{G_1}(V(G_1)-(S\cup v),v)\geq 3+4-2=5\geq d_{G_1}(v)$.
\end{proof}

We are now ready to prove our main result on $(G_1,\ell_1)$.

\begin{Lemma}\label{01}
$(G_1,\ell_1)$ is a positive instance of UBWBO if and only if $(G_0,\ell_0)$ is a positive instance of UBWBO.
\end{Lemma}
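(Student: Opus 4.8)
The plan is to prove Lemma \ref{01} by establishing the two directions separately, using the structural results already obtained, namely Claim \ref{cutvergleich}, Claim \ref{cong1}, and property $(\eta)$ of the FSUBWBO instance.

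\textbf{From $(G_0,\ell_0)$ positive to $(G_1,\ell_1)$ positive.} Suppose $(G_0,\ell_0)$ is a positive instance. By $(\eta)$, there is a well-balanced orientation $\vec{G_0}$ of $G_0$ with $d_{\vec{G_0}}^+(v)=1$ for all $v\in V_3$ and $d_{\vec{G_0}}^+(a)=k$. I would build an orientation $\vec{G_1}$ of $G_1$ as follows: keep the orientation of $\vec{G_0}$ on the edges of $G_0$ not incident to $a$, orient each new edge $ax$ (for $x\in X$) away from $a$, and orient each new edge $yv$ replacing an old arc of $\delta_{\vec{G_0}}(a)$ consistently with the direction that arc had in $\vec{G_0}$ (so if $av$ was oriented out of $a$ in $\vec{G_0}$, we orient $yv$ out of $y$; if it was oriented into $a$, we orient $yv$ into $y$). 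This determines a mapping $\phi:X\cup Y\to\{1,2\}$: every $x\in X$ receives out-degree contribution; one checks that the number of $y\in Y$ with $\phi(y)=1$ equals $d_{\vec{G_0}}^-(a)$ and the parity/balance condition $|\{\phi=1\}|=|\{\phi=2\}|$ holds because $|X|=2k+\mu-d_{G_0}(a)$, $|Y|=d_{G_0}(a)$, $d_{\vec{G_0}}^+(a)=k$, so the total out-degree demand on $X\cup Y$ is $|X|\cdot 2 - \text{(something)}$ — here I must be careful and recompute: since $\ell$ permits $d_{\vec{G_1}}^+(a)=k$ and the gadget must absorb the rest, $\phi$ should assign each $x\in X$ the value corresponding to whether the gadget sends flow in or out, arranged so that exactly half of $X\cup Y$ get value $1$. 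Then apply property $(6)$ of the $(\alpha,\beta)$-gadget to orient $W$ according to $\phi$, satisfying $(6i)$–$(6iii)$. The remaining work is to verify $\vec{G_1}$ is well-balanced and respects $\ell_1$; for well-balancedness I would invoke Proposition \ref{char} via Claim \ref{cong1} (which gives $\lambda_{G_1}(a,v)=d_{G_1}(v)$ for all $v$, and $d_{G_1}(a)=|X|>d_{G_1}(v)$ for all $v\ne a$), so it suffices to check $\min\{\lambda_{\vec{G_1}}(a,v),\lambda_{\vec{G_1}}(v,a)\}\ge\lfloor\tfrac12 d_{G_1}(v)\rfloor$. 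Using $(6iii)$ to route inside $W$ and the well-balancedness of $\vec{G_0}$ outside, one stitches together paths from $a$ through the gadget to any target vertex.

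\textbf{From $(G_1,\ell_1)$ positive to $(G_0,\ell_0)$ positive.} Suppose $\vec{G_1}$ is a well-balanced orientation of $G_1$ respecting $\ell_1$. I would contract the gadget back: define $\vec{G_0}$ on $V(G_0)$ by keeping the orientation of all edges of $G_0$ not incident to $a$, and for each original edge $av\in\delta_{G_0}(a)$ — which in $G_1$ was replaced by a path $a\text{--}x\cdots y\text{--}v$ through $W$ — orient $av$ in $\vec{G_0}$ according to the net direction of flow from $a$ to $v$ in $\vec{G_1}$; more concretely, orient $av$ out of $a$ iff the arc $yv$ leaves $y$ in $\vec{G_1}$. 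One then checks $d_{\vec{G_0}}^+(a)\le k$ using $\ell_1(a)$ together with how the gadget out-degrees were forced, and $d_{\vec{G_0}}^+(v)=1$ for $v\in V_3$ since those vertices and their incident edges are untouched and $\ell_1(v)=1$. Well-balancedness of $\vec{G_0}$ then follows from Proposition \ref{char} (applicable by $(\alpha)$) once we show $\min\{\lambda_{\vec{G_0}}(a,v),\lambda_{\vec{G_0}}(v,a)\}\ge\lfloor\tfrac12 d_{G_0}(v)\rfloor$ for all $v\in V(G_0)-a$; here Claim \ref{cutvergleich} is the key tool, translating a small $a\bar v$-cut in $\vec{G_0}$ into a small $a\bar v$-cut in $\vec{G_1}$ (or one of the special gadget cuts, which are large enough), contradicting well-balancedness of $\vec{G_1}$.

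\textbf{Main obstacle.} The delicate part is the bookkeeping in the first direction: getting the mapping $\phi:X\cup Y\to\{1,2\}$ to satisfy exactly the balance condition $|\{\phi=1\}|=|\{\phi=2\}|$ required by property $(6)$, which forces the precise choice of the gadget parameters $\alpha=2k+\mu-d_{G_0}(a)$ and $\beta=d_{G_0}(a)$ and of $d_{\vec{G_1}}^+(a)=k$. One must verify that with $d_{\vec{G_0}}^+(a)=k$ (from $(\eta)$), assigning each $x\in X$ out-degree $2$ except $k$ of them out-degree $1$ (to make $d_{\vec{G_1}}^+(a)=|X|-k = k+\mu-d_{G_0}(a)$... ) — this arithmetic must be pinned down carefully so that the in/out split of $Y$ matches $\vec{G_0}$ at $a$ and the global parity works out; the choice $\gamma=\alpha+\beta+4$ or $+5$ in Lemma \ref{gadexists} is precisely tuned for this. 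The second obstacle, less severe, is carefully combining $(6iii)$-type arc-connectivity inside $W$ with well-balancedness of $\vec{G_0}$ outside to certify local arc-connectivities $\lambda_{\vec{G_1}}(a,v)$ and $\lambda_{\vec{G_1}}(v,a)$ meet the required floor — this is where one leans on Claim \ref{cutvergleich} to localize any hypothetical small cut either entirely inside $(V(W)-Y)\cup a$ (handled by $(5)$ and the gadget bounds) or as a cut coming from $G_0$ (handled by the assumed well-balancedness).
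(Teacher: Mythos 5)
Your high-level plan is the same as the paper's: contract $V(W)\cup a$ back into $a$ for the direction $(G_1,\ell_1)\Rightarrow(G_0,\ell_0)$, and use $(\eta)$ plus property $(6)$ of the gadget to extend an orientation for the converse, with Proposition~\ref{char} and Claims \ref{cutvergleich}, \ref{cong1} doing the heavy lifting. But two key steps in your sketch do not work as stated, and you yourself flag the first one without resolving it.

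\textbf{The orientation of the $a$--$X$ edges.} You propose to ``orient each new edge $ax$ away from $a$.'' This is fatal. In $G_1$ every $x\in X$ has degree $4$ (three edges inside $W$ plus one to $a$), and by Claim~\ref{cong1} together with well-balancedness any feasible $\vec{G_1}$ must have $d_{\vec{G_1}}^+(x)=d_{\vec{G_1}}^-(x)=2$ for all $x\in X$ (indeed for all of $V(W)$). If every $ax$ enters $x$, the only compatible assignment is $\phi(x)=2$ for all $x\in X$; then $|\{\phi=1\}|=|Y'|=k$ and $|\{\phi=2\}|=|X|+|Y''|=(2k+\mu-d_{G_0}(a))+(d_{G_0}(a)-k)=k+\mu$, so the balance condition of $(6)$ fails whenever $\mu>0$, i.e.\ always. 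The fix, which the paper uses, is to split $X$ into $X'$ (with $|X'|=\tfrac{1}{2}\mu$, edges oriented from $X'$ to $a$, $\phi=1$) and $X''$ (edges oriented from $a$ to $X''$, $\phi=2$). This is exactly what makes each gadget vertex degree-balanced in $\vec{G_1}$ and makes $|\{\phi=1\}|=|\{\phi=2\}|=k+\tfrac{1}{2}\mu$. Without recognizing this split, the argument via $(6)$ does not get off the ground.

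\textbf{The bound $d_{\vec{G_0}}^+(a)\le k$.} You say this follows ``using $\ell_1(a)$ together with how the gadget out-degrees were forced.'' But $\ell_1(a)=d_{G_1}(a)$ is vacuous: one of the whole points of passing from $G_0$ to $G_1$ is to remove the explicit cap on $a$. The bound has to be extracted from well-balancedness of $\vec{G_1}$ alone, and this requires the vertex $b\in V_4$ of maximum degree $\mu$: well-balancedness gives $d_{\vec{G_1}}^-(a)\ge\lambda_{\vec{G_1}}(b,a)\ge\tfrac{1}{2}\mu$, whence $d_{\vec{G_1}}^+(a)-d_{\vec{G_1}}^-(a)\le d_{G_1}(a)-\mu=2k-d_{G_0}(a)$. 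One then transfers this excess through $W$ to $a$ in $G_0$ via Proposition~\ref{egal}, using that every $w\in V(W)$ has $d_{\vec{G_1}}^+(w)=d_{\vec{G_1}}^-(w)$ (forced, as noted above, by Claim~\ref{cong1} and well-balancedness). Your sketch does not mention $b$ or Proposition~\ref{egal}, so this step is genuinely missing rather than merely elided. The gadget parameter $\alpha=2k+\mu-d_{G_0}(a)$ is chosen precisely so this inequality lands on $k$; identifying that role of $b$ and $\mu$ is a core idea of the proof.
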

\begin{proof}
First suppose that $(G_1,\ell_1)$ is a positive instance of UBWBO, so by $(\alpha)$, construction, Claim \ref{cong1} and Proposition \ref{char}, there is an orientation $\vec{G}_1$ of $G_1$ such that $\min\{\lambda_{\vec{G}_1}(a,v),\lambda_{\vec{G}_1}(v,a)\}\geq \lfloor\frac{1}{2}d_{G_1}(v)\rfloor$ for all $v \in V(G_1)-a$ and $d_{\vec{G_1}}^+(v)\leq \ell_1(v)$ for all $v \in V(G_1)$. Let $\vec{G}_0$ be obtained from $\vec{G}_1$ by contracting $V(W)\cup a$ into $a$ and observe that $\vec{G}_0$ is an orientation of $G_0$. We show in the following that $G_0$ has the desired properties.

We first show that $\vec{G}_0$ is well-balanced. Let $v \in V(G_0)-a$ and let $S \subseteq V(G_0)$ be an $a\bar{v}$-set. By construction, we then have
 \begin{align*}d_{\vec{G_0}}^+(S)&=d_{\vec{G_1}}^+(S \cup V(W))\\ & \geq \lambda_{\vec{G}_1}(a,v)\\ 
&\geq\lfloor\frac{1}{2}d_{G_1}(v)\rfloor\\ &=\lfloor\frac{1}{2}d_{G_0}(v)\rfloor.
\end{align*}
This yields $\lambda_{\vec{G}_0}(a,v)\geq  \lfloor\frac{1}{2}d_{G_0}(v)\rfloor$. We similarly obtain $\lambda_{\vec{G}_0}(v,a)\geq  \lfloor\frac{1}{2}d_{G_0}(v)\rfloor$. 
 Hence $\vec{G}_0$ is well-balanced by $(\alpha)$, construction, Claim \ref{cong1} and Proposition \ref{char}.
\medskip

We still need to show that $d_{\vec{G}_0}^+(v)\leq \ell_0(v)$ for all $v \in V(G_0)$. For all $v \in V(G_0)-(V_3 \cup a)$, we trivially have $d_{\vec{G}_0}^+(v)\leq d_{G_0}(v)= \ell_0(v)$. For all $v \in V_3$, we have $d_{\vec{G}_0}^+(v)\leq d_{\vec{G}_1}^+(v)\leq \ell_1(v) =\ell_0(v)$ by construction and as $ d_{\vec{G}_1}^+(v)\leq \ell_1(v)$. 
Next, we clearly have 

\begin{equation}
d_{\vec{G}_0}^+(a)+d_{\vec{G}_0}^-(a)=d_{G_0}(a).  \tag{$\star$}
\end{equation}

Next, by the definition of $b$ and $\mu$, and $(\gamma),$ we have $d_{\vec{G}_1}^-(a)\geq \lambda_{\vec{G}_1}(b,a)\geq \lfloor\frac{1}{2}d_{G_1}(b)\rfloor =\frac{1}{2}\mu$. We obtain $d_{\vec{G}_1}^+(a)-d_{\vec{G}_1}^-(a)=d_{G_1}(a)-2d_{\vec{G}_1}^-(a)\leq d_{G_1}(a)-\mu=2k-d_{G_0}(a)$. Further, as $\vec{G_1}$ is well-balanced, by $(2),(3)$ and Claim \ref{cong1}, for every $v \in V(W)$, we have
\begin{align*}
 d_{\vec{G_1}}^+(v)+d_{\vec{G_1}}^-(v)&\geq \lambda_{\vec{G_1}}(v,a)+\lambda_{\vec{G_1}}(a,v)\\
& \geq 2\lfloor\frac{1}{2}d_{G_1}(v)\rfloor\\
&=d_{G_1}(v)\\
&=d_{\vec{G_1}}^+(v)+d_{\vec{G_1}}^-(v).
\end{align*}
Hence equality holds throughout and we obtain $d_{\vec{G_1}}^+(v)=d_{\vec{G_1}}^-(v)$.

By Proposition \ref{egal} and construction, we obtain 

\begin{align*}
d_{\vec{G}_0}^+(a)-d_{\vec{G}_0}^-(a)&=d_{\vec{G}_1}^+(V(W) \cup a)-d_{\vec{G}_1}^-(V(W) \cup a)\\  \tag{$\star \star$}
&=d_{\vec{G}_1}^+(a)-d_{\vec{G}_1}^-(a)\\
&\leq 2k-d_{G_0}(a).
\end{align*}

Summing $(\star)$ and $(\star \star)$, we obtain $d_{\vec{G_0}}^+(a)\leq k$. Hence $(G_0,\ell_0)$ is a positive instance of UBWBO.
\bigskip

Now suppose that $(G_0,\ell_0)$ is a positive instance of FSUBWBO, so by $(\alpha)$, construction, and Proposition \ref{char}, there is an orientation $\vec{G}_0$ of $G_0$ such that $\min\{\lambda_{\vec{G}_0}(a,v),\lambda_{\vec{G}_0}(v,a)\}\geq \lfloor\frac{1}{2}d_{G_0}(v)\rfloor$ for all $v \in V(G_0)-a$ and $d_{\vec{G_0}}^+(v)\leq \ell_0(v)$ for all $v \in V(G_0)$. By $(\eta)$, we may suppose that $d_{\vec{G_0}}^+(a)=k$. We let $Y'$ contain every $y \in Y$ whose unique neighbor in $V(G_1)-V(W)$ is a vertex $v$ such that the edge $av$ is oriented as $av$ in $\vec{G_0}$ and we set $Y''=Y-Y'$. We next choose an arbitrary partition $(X',X'')$ of $X$ such that $|X'|=\frac{1}{2}\mu$ and $|X''|=2k-d_{G_0}(a)+\frac{1}{2}\mu$. Observe that such a partition exists as $\mu$ is even by $(\gamma)$. We now define a function $\phi:X \cup Y\rightarrow \{1,2\}$. We set $\phi(v)=1$ for all $v \in X'\cup Y'$ and $\phi(v)=2$ for all $v \in X'' \cup Y''.$

Observe that, by  $d_{\vec{G_0}}^+(a)=k$ and $d_{G_0}(a)=d_{\vec{G_0}}^+(a)+d_{\vec{G_0}}^-(a)$, we have 

\begin{align*}
|\{v \in X \cup Y|\phi(v)=1\}|&=|X'\cup Y'|\\
&=\frac{1}{2}\mu+k\\
&=(2k-d_{G_0}(a)+\frac{1}{2}\mu)+d_{\vec{G_0}}^-(a)\\
&=|X''\cup Y''|\\
&=|\{v \in X \cup Y|\phi(v)=2\}|.
\end{align*}
We hence obtain by $(6)$ that there is an orientation $\vec{W}$ of $W$ that satisfies $(6i),(6ii)$, and $(6iii)$ with respect to $\phi$. 

We are now ready to define an orientation $\vec{G_1}$ of $G_1$. First we orient all edges linking $V(G_1)-V(W)$ and $X'\cup Y'$ away from $X'\cup Y'$ and we orient all edges linking $V(G_1)-V(W)$ and $X''\cup Y''$ toward $X'' \cup Y''$. Next, we give all edges in $E(G_1[V(W)])$ the orientation they have in $\vec{W}$. 
 Finally, we give all the edges both of whose endvertices are contained in $V(G_1)-(V(W)\cup a)$ the orientation they have in $\vec{G}_0$. This finishes the description of $\vec{G_1}$. We show in the following that $\vec{G_1}$ has the desired properties.

For every $v \in V(G_1)-V_3$, we trivially have $d_{\vec{G_1}}^+(v)\leq d_{G_1}(v)=\ell_1(v)$. For all $v \in V_3$, by construction, we have $d_{\vec{G_1}}^+(v)=d_{\vec{G_0}}^+(v)\leq \ell_0(v)=\ell_1(v)$. It hence remains to show that $\vec{G_1}$ is well-balanced. 

\begin{Claim}\label{gross}
Let $v \in V(G)-(V(W)\cup a)$ and let $S\subseteq V(G_1)$ be an $a\bar{v}$-set that is connected in $G$. Then $\min\{d_{\vec{G_1}}^+(S),d_{\vec{G_1}}^-(S)\}\geq \lfloor\frac{1}{2}d_{G_1}(v)\rfloor$.
\end{Claim}
\begin{proof}
Observe that $d_{\vec{G_1}}^+(w)=d_{\vec{G_1}}^-(w)$ for all $w \in V(W)$ by $(6i),(6ii)$ and construction. Hence, by Proposition \ref{egal} and by construction, we have 
\begin{align*}
d_{\vec{G_1}}^+(S)-d_{\vec{G_1}}^-(S)&=d_{\vec{G_1}}^+(S\cup V(W))-d_{\vec{G_1}}^-(S\cup V(W))\\
&=d_{\vec{G_0}}^+(S-V(W))-d_{\vec{G_0}}^-(S-V(W)). \tag{$\ast$}
\end{align*}
 By Claim \ref{cutvergleich}, we obtain that  either $d_{G_1}(S)\geq d_{G_0}(S-V(W))$ or $S \subseteq (V(W)-Y)\cup a$ and $d_{G_1}(S)\geq \min\{d_{G_0}(a),d_{G_1}(a)\}$.
\medskip

First suppose that $d_{G_1}(S)\geq d_{G_0}(S-V(W))$. We then have
\begin{align*}
d_{\vec{G_1}}^+(S)+d_{\vec{G_1}}^-(S)\geq d_{\vec{G_0}}^+(S-V(W))+d_{\vec{G_0}}^-(S-V(W)) \tag{$\ast \ast$}
\end{align*}
Summing $(\ast)$ and $(\ast \ast)$, we obtain $d_{\vec{G_1}}^+(S)\geq d_{\vec{G_0}}^+(S-V(W))$. By construction, this yields
\begin{align*}
d_{\vec{G_1}}^+(S)&\geq d_{\vec{G_0}}^+(S-V(W))\\
&\geq \lambda_{\vec{G_0}}(a,v)\\
&\geq \lfloor\frac{1}{2}d_{G_0}(v)\rfloor\\
&=\lfloor\frac{1}{2}d_{G_1}(v)\rfloor.
\end{align*}
A similar argument shows that $d_{\vec{G_1}}^-(S)\geq \lfloor\frac{1}{2}d_{G_1}(v)\rfloor$.
\medskip

Now suppose that $S \subseteq (V(W)-Y)\cup a$ and $d_{G_1}(S)\geq \min\{d_{G_0}(a),d_{G_1}(a)\}$. By $(\ast)$ and $d_{\vec{G_0}}^+(a)=k$, we obtain 
\begin{align*}
d_{\vec{G_1}}^+(S)-d_{\vec{G_1}}^-(S)&=d_{\vec{G_0}}^+(a)-d_{\vec{G_0}}^-(a)\\
&=2k-d_{G_0}(a). \tag{$\pentagram$}
\end{align*}
Further, we have 

\begin{align*}
d_{\vec{G_1}}^+(S)+d_{\vec{G_1}}^-(S)&\geq \min\{d_{G_0}(a),d_{G_1}(a)\}\\
&=\min\{d_{G_0}(a),2k+\mu-d_{G_0}(a)\}.
 \tag{$\pentagram \pentagram$}
\end{align*}

Summing $(\pentagram)$ and $(\pentagram \pentagram)$, by $k \geq \frac{1}{2}d_{G_0}(a)$ and the definition of $\mu$, we obtain

\begin{align*}
 d_{\vec{G_1}}^+(S)&\geq \frac{1}{2}(2k-d_{G_0}(a)+\min\{d_{G_0}(a),2k+\mu-d_{G_0}(a)\})\\
&\geq \frac{1}{2} \min\{d_{G_0}(a),2k+\mu-d_{G_0}(a)\}\\
&\geq \frac{1}{2}\mu\\
&\geq \frac{1}{2}d_{G_0}(v)\\
&=\frac{1}{2}d_{G_1}(v).
\end{align*}
Summing $-(\pentagram)$ and $(\pentagram \pentagram)$, by $d_{\vec{G_0}}^+(a)=k$, and the definition of $\mu$, we obtain
\begin{align*}
d_{\vec{G_1}}^-(S)&\geq \frac{1}{2}(d_{G_0}(a)-2k+\min\{d_{G_0}(a),2k+\mu-d_{G_0}(a)\})\\
&=\min\{d_{G_0}(a)-k,\frac{1}{2}\mu\}\\
&= \min\{d_{\vec{G_0}}^-(a),\frac{1}{2}\mu\}\\
&\geq \min\{\lambda_{\vec{G_0}}(v,a),\frac{1}{2}\mu\}\\
&\geq \lfloor\frac{1}{2}d_{G_0}(v)\rfloor\\
&=\lfloor\frac{1}{2}d_{G_1}(v)\rfloor.
\end{align*}
\end{proof}

\begin{Claim}\label{klein}
Let $v \in V(W)$ and let $S\subseteq V(G_1)$ be an $a\bar{v}$-set that is connected in $G$. Then $\min\{d_{\vec{G_1}}^+(S),d_{\vec{G_1}}^-(S)\}\geq 2$.
\end{Claim}
\begin{proof}

We first show that $d_{\vec{G_1}}^+(S)\geq 2$.
 If $X'' \cap S = \emptyset$, we obtain by construction, $k \geq \frac{1}{2}d_{G_0}(a)$ and the definition of $\mu$ that $d_{\vec{G_1}}^+(S)\geq |X''|\geq \frac{1}{2}\mu\geq 2$. We may hence suppose that there is some $x \in X'' \cap S$. If there is some $x' \in X''-S$, we obtain by $(6iii)$ and construction that
\begin{align*}
 d_{\vec{G_1}}^+(S)&\geq d_{\vec{W}}^+(S \cap W)+|\{ax'\}|\\&\geq \lambda_{\vec{W}}(x,x')+1\\
&\geq \min\{d_{\vec{W}}^+(x),d_{\vec{W}}^-(x')\}+1\\
&\geq2.
\end{align*}

Further, if there is some $z \in V(W)-(X''\cup Y''\cup S)$, we obtain by $(6iii)$ and construction that $d_{\vec{G_1}}^+(S)\geq  \lambda_{\vec{W}}(x,z)=\min\{d_{\vec{W}}^+(x),d_{\vec{W}}^-(z)\}\geq 2$.
 We may hence suppose that $v \in Y''$ and $V(W)-Y'' \subseteq S$. Let $v'$ be the unique neighbor of $v$ in $G_1$ which is not contained in $V(W)$. 
As $v \in Y''$ and by construction, we have that $A(\vec{G_1})$ contains the arc $v'v$. If $v' \in S$, then $d_{\vec{G_1}}^+(S)\geq d_{\vec{W}}^+(S\cap V(W))+|\{v'v\}|\geq 1+1=2$. We may hence suppose that $v' \in V(G_1)-S$. By Claim \ref{gross}, $(\beta)$, and $(\gamma)$, we obtain that $\lambda_{\vec{G_1}}(a,v')\geq 1$. Hence $\vec{G_1}$ contains a directed $av'$-path $P$. Let $z_0z_1$ be the last arc on $P$ that satisfies $z_0 \in S$ and $z_1 \in V(G_1)-S$. If $z_1\in V(W)$, we obtain that $V(P_{z_1,v'})\subseteq V(G_1)-S$ and $P_{z_1,v'}$ contains at least one arc $z_2z_3$ with $z_2 \in V(W)$ and $z_3 \in V(G_1)-(V(W)\cup a)$. By construction, we obtain that $z_2 \in Y'$ which contradicts $V(W)-Y''\subseteq S$. We hence obtain $ z_1 \in V(G_1)-(V(W)\cup a)$, so $d_{\vec{G_1}}^+(S)\geq d_{\vec{W}}^+(S \cap V(W))+|\{z_0z_1\}|\geq 1+1=2$.

Hence in all cases, we have $d_{\vec{G_1}}^+(S)\geq 2$. Due to the fact that $|X'|=\frac{1}{2}\mu\geq 2$, similar arguments show that $d_{\vec{G_1}}^-(S)\geq 2$.
\end{proof}
Now by Claims \ref{gross} and \ref{klein}, $(\alpha)$ and Proposition \ref{char}, we obtain that $\vec{G_1}$ is well-balanced.
\end{proof}

We now obtain $G_2$ from $G_1$ by adding a set $B$ containing two vertices $v',v''$ for every $v \in V_4$ and linking each of them to $a$ by $d_{G_1}(v)$ edges and adding a vertex $a'$ and linking it to to $a$ by 3 edges. Further, we define $\ell_2:V(G_2)\rightarrow \mathbb{Z}_{\geq{0}}$ by $\ell_2(v)=\ell_1(v)$ for all $v \in V(G_1)$ and $\ell_2(v)=d_{G_2}(v)$ for all $v \in V(G_2)-V(G_1)$. It follows immediately from Proposition \ref{ext} that $(G_2,\ell_2)$ is a positive instance of UBWBO if and only if $(G_1,\ell_1)$ is a positive instance of UBWBO. It hence follows from Lemma \ref{01} that $(G_2,\ell_2)$ is a positive instance of UBWBO if and only if $(G_0,\ell_0)$ is a positive instance of UBWBO. Further, as $(G_0,\ell_0)$ satisfies $(\alpha)-(\eta)$, it follows from Claim \ref{cong1} and construction that $(G_2,\ell_2)$ satisfies $(a)-(g)$ for the partition $(V_3,V_3',V_4\cup V(W)\cup B,\{a,a'\})$. Hence $(G_2,\ell_2)$ is an instance of SSUBWBO. As the size of $(G_2,\ell_2)$ is polynomial in the size of $(G_0,\ell_0)$ by construction and the choice of $W$, Lemma \ref{2hard} follows.
\subsection{Main proof}\label{secondoptimal} 
We are now ready to prove Theorem \ref{opthard}.
 Clearly, OCO is in NP. We prove the hardness by a reduction from SSUBWBO. We need the following definition: For some positive integers $n,\alpha$, a {\it $(n,\alpha)$-tube} is obtained from a path on $n$ vertices by replacing each edge by $\alpha$ copies of itself. Given a graph $G$ and some $v \in V(G)$, we denote the operation of adding a vertex-disjoint $(n,\alpha)$-tube to $G$ and identifying $v$ with one of the vertices of degree $\alpha$ of the $(n,\alpha)$-tube by {\it attaching} the $(n,\alpha)$-tube at $v$. Intuitively speaking, attaching a tube to a vertex augments the importance of the connectivity properties of that vertex.  Now let $(G,\ell)$ be an instance of SSUBWBO and let $(V_3,V_3',V_4,\{a,a'\})$ be a partition of $V(G)$ as described in the definition of SSUBWBO. Further, let $n=|V(G)|$. Clearly, we may suppose that $n \geq 13$. We now create a graph $H$ from $G$ in the following way: we attach a $(n^5,d_G(v))$-tube $T_v$ to every $v \in V_4 \cup \{a,a'\}$ and we attach a $(n^2,3)$-tube $T_v$ to every $v \in V_3$. We say that a subset $\{u,v\}$ of $V(G)$ of size 2 is {\it important} if $\{u,v\}\cap V_3'=\emptyset$ and $|\{u,v\}\cap V_3|\leq 1$ and an important set $\{u,v\}\subseteq V(G)$ is {\it super important} if $\{u,v\}\cap V_3=\emptyset$. We denote by $\mathcal{I}$ and $\mathcal{S}\mathcal{I}$ the collection of important and super important sets, respectively. 
 Further, we set $k={n^5 \choose 2}\sum_{v \in V_4 \cup \{a,a'\}}d_G(v)+n^{10}\sum_{\{u,v\}\in \mathcal{S}\mathcal{I}}\min\{d_G(u),d_G(v)\}+3n^7|\mathcal{I}-\mathcal{S}\mathcal{I}|$. For an illustration, see Figure \ref{firstsecond}. 

\begin{figure}[h]\begin{center}
  \includegraphics[width=\textwidth]{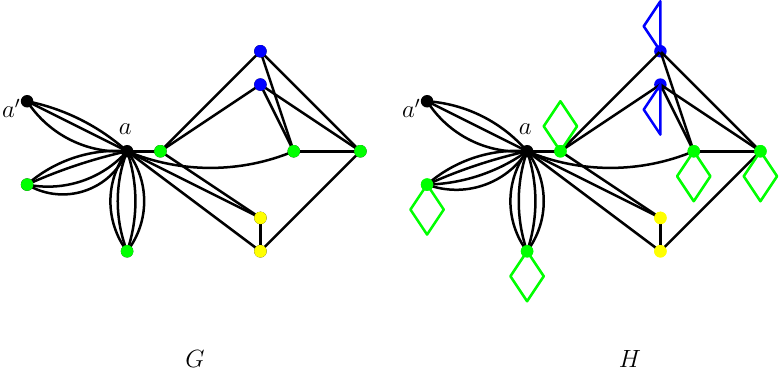} 
  \caption{An illustration of an instance $(G,\ell)$ of SSUBWBO and the corresponding instance $H$ of OCO. The vertices of $V_3,V_3'$ and $V_4$ are marked in blue, yellow, and green, respectively. Triangles and diamonds indicate, $(n^2,d_G(v))$-tubes and $(n^5,d_G(v))$-tubes, respectively.}\label{firstsecond}
\end{center}
\end{figure}

We show in the following that $(H,k)$ is a positive instance of OCO if and only if $(G,\ell)$ is a positive instance of SSUBWBO. First suppose that $(G,\ell)$ is a positive instance SSUBWBO, so there is a well-balanced orientation $\vec{G}$ of $G$ that satisfies $d_{\vec{G}}^+(v)\leq \ell(v)$ for all $v \in V(G)$. By Proposition \ref{ext}, we may suppose that $d_{\vec{G}}(a',a)=2$ and $d_{\vec{G}}(a,a')=1$. We first conclude an important property of $\vec{G}$.
\begin{Claim}\label{conn1}
For every pair $\{u,v\} \in \mathcal{I}$, we have $\lambda_{\vec{G}}(u,v)+\lambda_{\vec{G}}(v,u)=min\{d_G(u),d_G(v)\}$. 
\end{Claim}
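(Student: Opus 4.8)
The claim is about the well-balanced orientation $\vec{G}$ of $G$ (not $H$), so the tubes play no role here; this is really a statement about $\vec{G}$ together with the structural hypotheses $(a)$--$(g)$ of SSUBWBO. The plan is to show that for any important pair $\{u,v\}$ we have $\lambda_{\vec{G}}(u,v)+\lambda_{\vec{G}}(v,u)\geq \min\{d_G(u),d_G(v)\}$; the reverse inequality is immediate since any $u\bar v$-set $S$ with $u=S$ (or $v\in V(G)-S$ a singleton) gives $d_{\vec{G}}^+(S)+d_{\vec{G}}^-(S)\le d_G(u)$ (resp.\ $d_G(v)$), so $\lambda_{\vec{G}}(u,v)+\lambda_{\vec{G}}(v,u)\le \min\{d_G(u),d_G(v)\}$. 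For the lower bound I would split into the super important case ($\{u,v\}\cap V_3=\emptyset$) and the case with exactly one vertex, say $u$, in $V_3$.

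\textbf{Using $\lambda_G(a,\cdot)$.} The key structural fact is hypothesis $(a)$: $\lambda_G(a,v)=d_G(v)$ for all $v\in V(G)-a$, so $\lambda_G(u,v)=\min\{d_G(u),d_G(v)\}$ for \emph{every} pair $\{u,v\}$ by the standard fact that $\lambda_G(u,v)\ge \min\{\lambda_G(a,u),\lambda_G(a,v)\}$ combined with the trivial upper bound $\lambda_G(u,v)\le\min\{d_G(u),d_G(v)\}$. Now since $\vec{G}$ is well-balanced, $\min\{\lambda_{\vec{G}}(u,v),\lambda_{\vec{G}}(v,u)\}\ge\lfloor\frac12\lambda_G(u,v)\rfloor=\lfloor\frac12\min\{d_G(u),d_G(v)\}\rfloor$, hence $\lambda_{\vec{G}}(u,v)+\lambda_{\vec{G}}(v,u)\ge 2\lfloor\frac12\min\{d_G(u),d_G(v)\}\rfloor$. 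This already gives the exact value $\min\{d_G(u),d_G(v)\}$ whenever that quantity is even. So the only gap is when $\min\{d_G(u),d_G(v)\}$ is odd, which by $(c)$ and $(d)$ (all degrees in $V_4$ even; degree $3$ on $V_3\cup V_3'$) forces the smaller-degree vertex to be in $V_3\cup V_3'$ with degree $3$. Since $\{u,v\}$ is important, $V_3'$ is excluded, so we are reduced to: $u\in V_3$, $d_G(u)=3$, and we must show $\lambda_{\vec{G}}(u,v)+\lambda_{\vec{G}}(v,u)\ge 3$.

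\textbf{The odd case.} Here I would argue that $\{d_{\vec{G}}^+(u),d_{\vec{G}}^-(u)\}=\{1,2\}$: well-balancedness gives $\min\{\lambda_{\vec{G}}(a,u),\lambda_{\vec{G}}(u,a)\}\ge 1$, so $d_{\vec{G}}^+(u)\ge 1$ and $d_{\vec{G}}^-(u)\ge 1$, and since $d_G(u)=3$ one of them equals $1$ and the other $2$. Take $v$ with $d_G(v)\ge 3$ (if $d_G(v)=3$ as well, i.e.\ $\min=3$; if $d_G(v)\ge4$ the bound $2\lfloor 3/2\rfloor=2$ is not enough, so we genuinely need this argument). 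Suppose WLOG by symmetry of the roles of $u$ relative to in/out that we want $\lambda_{\vec{G}}(u,v)+\lambda_{\vec{G}}(v,u)\ge 3$; consider a $u\bar v$-set $S$ realizing $\lambda_{\vec{G}}(u,v)$ and a $v\bar u$-set $T$ realizing $\lambda_{\vec{G}}(v,u)$. If $S=\{u\}$ then $\lambda_{\vec{G}}(u,v)=d_{\vec{G}}^+(u)$ and $\lambda_{\vec{G}}(v,u)\le d_{\vec{G}}^-(u)$, and the two sum to $d_G(u)=3$; similarly if $T=\{u\}$. Otherwise both minimizing sets are "large," and I would combine this with the fact that $\lambda_{\vec{G}}(a,u)\ge 1$, $\lambda_{\vec{G}}(u,a)\ge 1$ and that $v$ can be reached to/from $a$ with enough connectivity (well-balancedness plus $\lambda_G(a,v)=d_G(v)\ge 3$): route flow through $a$. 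Concretely, $\lambda_{\vec{G}}(u,v)\ge\min\{\lambda_{\vec{G}}(u,a),\lambda_{\vec{G}}(a,v)\}$, and symmetrically for $\lambda_{\vec{G}}(v,u)$; the point is that $\lambda_{\vec{G}}(u,a)+\lambda_{\vec{G}}(a,u)= d_G(u)=3$ because $N_G(u)\subseteq V_4$ (hypothesis $(e)$) means the only edges at $u$ go "towards" the high-connectivity core containing $a$, so a cut separating $u$ from $a$ must use all edges at $u$ — more precisely $\lambda_G(u,a)=d_G(u)$ as well by the $a$-is-universal property, and well-balancedness forces $\lambda_{\vec{G}}(u,a)\ge 1$, $\lambda_{\vec{G}}(a,u)\ge1$ with sum $\le 3$; if the sum were $2$ we'd have $d_{\vec{G}}^+(u)=d_{\vec{G}}^-(u)$ contradicting oddness, so the sum is exactly $3$. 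Then $\lambda_{\vec{G}}(a,v)\ge 2$ and $\lambda_{\vec{G}}(v,a)\ge 2$ (well-balanced, $d_G(v)\ge 4$ even, or $=3$), and combining $\lambda_{\vec{G}}(u,v)+\lambda_{\vec{G}}(v,u)\ge \min\{\lambda_{\vec{G}}(u,a),\lambda_{\vec{G}}(a,v)\}+\min\{\lambda_{\vec{G}}(v,a),\lambda_{\vec{G}}(a,u)\}$, and since $\{\lambda_{\vec{G}}(u,a),\lambda_{\vec{G}}(a,u)\}=\{1,2\}$ while both $\lambda_{\vec{G}}(a,v),\lambda_{\vec{G}}(v,a)\ge 2$, this sum is $\ge 1+2=3$. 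The main obstacle is handling the sub-case $d_G(v)=3$ cleanly (here $\min=3$ and both endpoints behave like $u$), and making the "route through $a$" step rigorous — i.e.\ carefully justifying $\lambda_{\vec{G}}(u,a)+\lambda_{\vec{G}}(a,u)=3$ from $N_G(u)\subseteq V_4$ and the universality of $a$ — but this should follow from Proposition~\ref{char}-type reasoning already developed in the paper.
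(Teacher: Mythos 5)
Your opening reduction is correct and matches the paper's: well-balancedness plus $\lambda_G(u,v)=\min\{d_G(u),d_G(v)\}$ (from $(a)$) gives the claim for free whenever $\min\{d_G(u),d_G(v)\}$ is even, so the only work is for important pairs with one vertex in $V_3$ of degree $3$; and the "route through $a$" via $\lambda_{\vec G}(u,v)\geq\min\{\lambda_{\vec G}(u,a),\lambda_{\vec G}(a,v)\}$ is exactly the tool the paper uses in Claim~\ref{conn1}. However, there is a genuine gap: you never invoke the degree constraint $(g)$, i.e.\ $d_{\vec G}^+(u)\leq\ell(u)=1$ for $u\in V_3$, and it cannot be avoided. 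Recall the claim is about the \emph{specific} orientation $\vec G$ that is well-balanced, respects the degree bounds, and (by the normalization via Proposition~\ref{ext}) has $d_{\vec G}(a',a)=2$. You treat the pair $\{u,v\}$ only for $v$ with $d_G(v)\geq 3$ in a vague sense and never handle $v=a'$, which is an important pair of two degree-$3$ vertices. For $\{u,a'\}$ the route-through-$a$ computation gives $\lambda_{\vec G}(u,a')+\lambda_{\vec G}(a',u)=\min\{\lambda_{\vec G}(u,a),d_{\vec G}(a,a')\}+\min\{d_{\vec G}(a',a),\lambda_{\vec G}(a,u)\}$, and this equals $3$ only if the two asymmetries are aligned (e.g.\ $\lambda_{\vec G}(a,u)=2$ together with $d_{\vec G}(a',a)=2$). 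A well-balanced orientation with $d_{\vec G}^+(u)=2$ for some $u\in V_3$ would make this sum drop to $2$. The paper's use of $(g)$ pins $d_{\vec G}^+(u)=1$, hence $\lambda_{\vec G}(a,u)=2$ and $\lambda_{\vec G}(u,a)=1$, for \emph{all} $u\in V_3$ simultaneously, and this is precisely what makes the $\{u,a'\}$ case (and the later Claim~\ref{schluss}) work. Your ``WLOG by symmetry of the roles of $u$ relative to in/out'' is therefore not available, since you cannot normalize each degree-$3$ vertex independently of $a'$ and of the other $V_3$-vertices.

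A secondary problem: your justification that $\lambda_{\vec G}(u,a)+\lambda_{\vec G}(a,u)=3$ (``if the sum were $2$ we'd have $d_{\vec G}^+(u)=d_{\vec G}^-(u)$, contradicting oddness'') does not hold; having both local arc-connectivities to/from $a$ equal to $1$ is a priori compatible with $d_{\vec G}^+(u)\neq d_{\vec G}^-(u)$. The correct argument, used in the paper, is a cut argument: for any $a\bar u$-set $S$ with $V(G)-S$ connected and $|V(G)-S|\geq 2$, the set $V(G)-S$ contains some $v'\in V_4$ by $(e)$, so $d_{\vec G}^+(S)\geq\lambda_{\vec G}(a,v')\geq\lfloor\tfrac12 d_G(v')\rfloor\geq 2$; and for the singleton case $V(G)-S=\{u\}$, $d_{\vec G}^+(S)=d_{\vec G}^-(u)=3-d_{\vec G}^+(u)\geq 2$, which is exactly where $(g)$ is needed.
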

\begin{proof}

First consider some $\{u,v\} \subseteq V_4 \cup a$. As $\vec{G}$ is well-balanced and by $(a)$ and $(d)$, we have $\lambda_{\vec{G}}(u,v)+\lambda_{\vec{G}}(v,u)=2\lfloor \frac{1}{2}\lambda_{G}(u,v)\rfloor=2\lfloor \frac{1}{2}\min\{d_G(u),d_G(v)\}\rfloor=min\{d_G(u),d_G(v)\}$. 

For the next case, we first show that $\lambda_{\vec{G}}(a,v)=2$ for every $v \in V_3$.
Let $S \subseteq V(G)$ be an $a\bar{v}$-set. Clearly, we may suppose that $V(G)-S$ is connected in $G$. 
 If $V(G)-S=\{v\}$, by $(c),(g)$ and $d_{\vec{G}}^+(v)\leq \ell(v)$, we have $d_{\vec{G}}^+(S)=d_G(v)-d_{\vec{G}}^+(v)\geq 3-1=2$. Otherwise, by $(e)$ and as $V(G)-S$ is connected in $G$, we obtain that $V(G)-S$ contains a vertex $v' \in V_4$. As $\vec{G}$ is well-balanced and by $(a)$ and $(d)$, we obtain $d_{\vec{G}}^+(S)\geq \lambda_{\vec{G}}(a,v')\geq \lfloor \frac{1}{2}\lambda_{G}(a,v')\rfloor=\lfloor\frac{1}{2}\min\{d_G(a),d_G(v')\}\rfloor=\lfloor\frac{1}{2} d_G(v')\rfloor\geq 2$. Hence $\lambda_{\vec{G}}(a,v)=2$ for every $v \in V_3$.

Now consider $u \in V_4 \cup a$ and $v \in V_3$. As $\vec{G}$ is well-balanced and by $(a),(c)$, and $(d)$, we have $\lambda_{\vec{G}}(v,u)\geq \lfloor \frac{1}{2}\lambda_{G}(u,v)\rfloor=\lfloor \frac{1}{2}\min\{d_G(u),d_G(v)\}\rfloor=\lfloor \frac{1}{2}d_G(v)\rfloor=1$. Further, by construction, as $\vec{G}$ is well-balanced, by $(a),(c)$ and $(d)$, we have $\lambda_{\vec{G}}(u,v)\geq \min\{\lambda_{\vec{G}}(u,a),\lambda_{\vec{G}}(a,v)\}=\min\{\lfloor \frac{1}{2}\lambda_{G}(u,a)\rfloor,2\}\geq \min\{\lfloor \frac{1}{2}d_G(u)\rfloor,2\}=2$. We obtain $\lambda_{\vec{G}}(u,v)+\lambda_{\vec{G}}(v,u)\geq 2+1=3=min\{d_G(u),d_G(v)\}$. As $\lambda_{\vec{G}}(u,v)+\lambda_{\vec{G}}(v,u)\leq min\{d_G(u),d_G(v)\}$ clearly holds, we obtain $\lambda_{\vec{G}}(u,v)+\lambda_{\vec{G}}(v,u)=min\{d_G(u),d_G(v)\}$.

Finally, for $v \in V_4 \cup V_3 \cup a$, by the assumption that $d_{\vec{G}}(a',a)=2$, we have $\lambda_{\vec{G}}(a',v)+\lambda_{\vec{G}}(v,a')=\min\{\lambda_{\vec{G}}(a',a),\lambda_{\vec{G}}(a,v)\}+\min\{\lambda_{\vec{G}}(v,a),\lambda_{\vec{G}}(a,a')\}=2+1=3$.
\end{proof}

We now create an orientation $\vec{H}$ of $H$ in the following way: First, we orient all the edges of $E(G)$ in the same way they are oriented in $\vec{G}$. Next consider some $v \in V_4\cup V_3 \cup \{a,a'\}$ and let $\{x_1,\ldots,x_q\}=V(T_v)$ such that $x_1=v$ and $x_ix_{i+1}\in E(T_v)$ for $i=1,\ldots,q-1$. For $i=1,\ldots,q-1$, we orient $d_{\vec{G}}^-(v)$ of the edges linking $x_i$ and $x_{i+1}$ from $x_i$ to $x_{i+1}$ and we orient the remaining $d_{\vec{G}}^+(v)$ edges  linking $x_{i}$ and $x_{i+1}$ from $x_{i+1}$ to $x_i$. 

We now show that $\vec{H}$ has the desired properties. First consider some $\{x,y\}\subseteq V(T_v)$ for some $v \in V_4 \cup V_3 \cup \{a,a'\} $ such that $x$ is contained in each $vy$-path in $H$. By construction, we have $\lambda_{\vec{H}}(x,y)+\lambda_{\vec{H}}(y,x)=d_{\vec{G}}^-(v)+d_{\vec{G}}^+(v)=d_G(v)$. 
\begin{Claim}\label{conn2}
Let $\{u,v\} \in \mathcal{I}$, $x \in V(T_u)$ and $y \in V(T_v)$. Then $\lambda_{\vec{H}}(x,y)+\lambda_{\vec{H}}(y,x)\geq \min\{d_G(u),d_G(v)\}$.
\end{Claim}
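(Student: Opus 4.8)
The plan is to prove the following stronger statement: every $x\bar{y}$-set $S$ of $\vec H$ satisfies $d_{\vec H}^+(S)\geq \lambda_{\vec G}(u,v)$, and, symmetrically, every $y\bar{x}$-set of $\vec H$ has out-degree at least $\lambda_{\vec G}(v,u)$. Granting this, $\lambda_{\vec H}(x,y)\geq \lambda_{\vec G}(u,v)$ and $\lambda_{\vec H}(y,x)\geq \lambda_{\vec G}(v,u)$, so, since $\{u,v\}\in\mathcal I$, Claim~\ref{conn1} gives $\lambda_{\vec H}(x,y)+\lambda_{\vec H}(y,x)\geq \lambda_{\vec G}(u,v)+\lambda_{\vec G}(v,u)=\min\{d_G(u),d_G(v)\}$, as required. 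As the hypothesis is symmetric under interchanging $(x,u,T_u)$ with $(y,v,T_v)$, which carries an $x\bar{y}$-set to a $y\bar{x}$-set, it suffices to bound $d_{\vec H}^+(S)$ for an $x\bar{y}$-set $S\subseteq V(H)$.

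The key structural facts are that $T_u$ meets the rest of $H$ only in $u$ and $T_v$ only in $v$, that $A(\vec G)\subseteq A(\vec H)$, and that $V(G)\subseteq V(H)$. I would split into three cases according to $S\cap\{u,v\}$. If $u\in S$ and $v\notin S$, then $S\cap V(G)$ is a $u\bar{v}$-set of $\vec G$, and since every arc of $\vec G$ leaving $S\cap V(G)$ is an arc of $\vec H$ leaving $S$, we get $d_{\vec H}^+(S)\geq d_{\vec G}^+(S\cap V(G))\geq \lambda_{\vec G}(u,v)$.

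If $u\notin S$, list the vertices of $T_u$ along its path as $u=z_1,z_2,\dots,z_q$, so $x=z_i$ with $i\geq 2$; by construction, between consecutive $z_j,z_{j+1}$ the orientation $\vec H$ has $d_{\vec G}^-(u)$ arcs pointing away from $u$ and $d_{\vec G}^+(u)$ arcs pointing towards $u$. Let $j<i$ be largest with $z_j\notin S$ (it exists since $z_1=u\notin S$); then $z_{j+1}\in S$, so the $d_{\vec G}^+(u)$ arcs from $z_{j+1}$ to $z_j$ all leave $S$, whence $d_{\vec H}^+(S)\geq d_{\vec G}^+(u)\geq \lambda_{\vec G}(u,v)$, the last step because $\{u\}$ is a $u\bar{v}$-set of $\vec G$. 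Symmetrically, if $u,v\in S$, then $y\notin S$; listing the vertices of $T_v$ as $v=w_1,\dots,w_m$ with $y=w_\ell$, $\ell\geq 2$, let $j<\ell$ be smallest with $w_{j+1}\notin S$ (it exists since $w_1=v\in S$ while $w_\ell\notin S$); then $w_j\in S$, so the $d_{\vec G}^-(v)$ arcs from $w_j$ to $w_{j+1}$ all leave $S$, whence $d_{\vec H}^+(S)\geq d_{\vec G}^-(v)\geq \lambda_{\vec G}(u,v)$, the last step because $V(G)-v$ is a $u\bar{v}$-set of $\vec G$. This covers all cases.

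I expect no genuine obstacle: the argument is bookkeeping once the orientation convention of the tubes is pinned down (per bond, $d_{\vec G}^+(\cdot)$ arcs towards the attachment vertex and $d_{\vec G}^-(\cdot)$ away from it), and the degenerate positions need no separate treatment, since $x=u$ excludes the case $u\notin S$ and $y=v$ excludes the case $u,v\in S$. The one point worth isolating is that a minimum $u\bar{v}$-cut of $\vec G$ is bounded above both by $d_{\vec G}^+(u)$ and by $d_{\vec G}^-(v)$ (witnessed by $\{u\}$ and $V(G)-v$), which is precisely why all three cases collapse to the single bound $\lambda_{\vec G}(u,v)$; summing the $x\to y$ and $y\to x$ bounds then lands exactly on $\lambda_{\vec G}(u,v)+\lambda_{\vec G}(v,u)$, which Claim~\ref{conn1} identifies with $\min\{d_G(u),d_G(v)\}$.
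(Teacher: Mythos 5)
Your proposal is correct. The paper's own proof is a one-liner built on an exact series-composition identity: it observes that every directed $x$--$y$ path must pass through the attachment vertices $u$ and then $v$, so that $\lambda_{\vec{H}}(x,y)=\min\{\lambda_{\vec{H}}(x,u),\lambda_{\vec{H}}(u,v),\lambda_{\vec{H}}(v,y)\}=\min\{d_{\vec{G}}^+(u),\lambda_{\vec{G}}(u,v),d_{\vec{G}}^-(v)\}=\lambda_{\vec{G}}(u,v)$, and similarly for $\lambda_{\vec{H}}(y,x)$, after which Claim~\ref{conn1} finishes it. Your argument replaces that implicit (cut-and-path) composition fact by an explicit, purely cut-side verification: you bound $d_{\vec{H}}^+(S)$ for an arbitrary $x\bar{y}$-set $S$ by a three-way case split on $S\cap\{u,v\}$, and in each case locate either a $u\bar{v}$-cut inside $\vec{G}$ or a full bond of the tube whose out-degree is $d_{\vec{G}}^+(u)$ or $d_{\vec{G}}^-(v)$, both of which dominate $\lambda_{\vec{G}}(u,v)$ via the witnesses $\{u\}$ and $V(G)-v$. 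The two proofs share the same reduction to Claim~\ref{conn1} and the same observation that the tube bonds carry exactly $d_{\vec{G}}^+$ and $d_{\vec{G}}^-$ of the attachment vertex; yours is a bit longer but more self-contained, proving only the one-sided inequality that is actually needed rather than an exact evaluation, and it spells out the degenerate positions ($x=u$, $y=v$) which the paper's min-formula silently subsumes.
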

\begin{proof}
By construction we have $\lambda_{\vec{H}}(x,y)=\min\{\lambda_{\vec{H}}(x,u),\lambda_{\vec{H}}(u,v),\lambda_{\vec{H}}(v,y)\}=\min\{d_{\vec{G}}^+(u),\lambda_{\vec{G}}(u,v),d_{\vec{G}}^-(v)\}=\lambda_{\vec{G}}(u,v)$ and similarly $\lambda_{\vec{H}}(y,x)=\lambda_{\vec{G}}(v,u)$. By Claim \ref{conn1}, we have $\lambda_{\vec{H}}(x,y)+\lambda_{\vec{H}}(y,x)=\lambda_{\vec{G}}(u,v)+\lambda_{\vec{G}}(v,u)=min\{d_G(u),d_G(v)\}$.
\end{proof}

We are now ready to finish the proof. By Claim \ref{conn2} and construction, we have

\begin{align*}
\sum_{\{x,y\}\subseteq V(H)}(\lambda_{\vec{H}}(x,y)+\lambda_{\vec{H}}(y,x))&\geq \sum_{v \in V_4 \cup \{a,a'\}}\sum_{\{x,y\}\subseteq V(T_v)}(\lambda_{\vec{H}}(x,y)+\lambda_{\vec{H}}(y,x))\\&\hspace{3mm}+\sum_{\{u,v\} \in \mathcal{I}}\sum_{x \in V(T_u)}\sum_{y \in V(T_v)}(\lambda_{\vec{H}}(x,y)+\lambda_{\vec{H}}(y,x))\\
&=\sum_{v \in V_4 \cup \{a,a'\}}\sum_{\{x,y\}\subseteq V(T_v)}d_G(v)\\&\hspace{3mm}+\sum_{\{u,v\} \in \mathcal{I}}\sum_{x \in V(T_u)}\sum_{y \in V(T_v)}\min\{d_G(u),d_G(v)\}\\
&=\sum_{v \in V_4 \cup \{a,a'\}}{|V(T_v)|\choose 2}d_G(v)\\&\hspace{3mm} +\sum_{\{u,v\} \in \mathcal{I}}|V(T_u)||V(T_v)|\min\{d_G(u),d_G(v)\}\\
&={n^5 \choose 2}\sum_{v \in V_4 \cup \{a,a'\}}d_G(v)+n^{10}\sum_{\{u,v\} \in \mathcal{S}\mathcal{I}}\min\{d_G(u),d_G(v)\}\\&\hspace{3mm}+n^{7}\sum_{\{u,v\} \in \mathcal{I}-\mathcal{S}\mathcal{I}}\min\{d_G(u),d_G(v)\}\\
&={n^5 \choose 2}\sum_{v \in V_4 \cup \{a,a'\}}d_G(v)+n^{10}\sum_{\{u,v\} \in \mathcal{S}\mathcal{I}}\min\{d_G(u),d_G(v)\}\\&\hspace{3mm}+3 n^{7} |\mathcal{I}-\mathcal{S}\mathcal{I}|\\
&=k.
\end{align*}

We hence obtain that $(H,k)$ is a positive instance of OCO.
\medskip

Now suppose that $(H,k)$ is a positive instance of OCO, so there is an orientation $\vec{H}$ of $H$ with $\sum_{\{x,y\}\subseteq V(H)}(\lambda_{\vec{H}}(x,y)+\lambda_{\vec{H}}(y,x))\geq k$. 
 Let $\vec{G}=\vec{H}[V(G)]$ and observe that $\vec{G}$ is an orientation of $G$. We will prove through several claims that either $\vec{G}$ is a well-balanced orientation of $G$ that satisfies $d_{\vec{G}}^+(v)\leq \ell(v)$ for all $v \in V(G)$ or the orientation obtained from $\vec{G}$ by inversing all arcs has this property. The first claim is the most technical one.

\begin{Claim}\label{connasymm}
For all $\{u,v\} \in \mathcal{I}$, we have $\lambda_{\vec{G}}(u,v)+\lambda_{\vec{G}}(v,u)=\min\{d_G(u),d_G(v)\}$.
\end{Claim}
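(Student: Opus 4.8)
The plan is to bound $\lambda_{\vec{G}}(u,v)+\lambda_{\vec{G}}(v,u)$ from above by $\min\{d_G(u),d_G(v)\}$ trivially (since any orientation of $G$ has $\lambda_{\vec{G}}(u,v)\le d_{\vec{G}}^+(u)\le d_G(u)$ and symmetrically, and since by an out/in-degree argument $\lambda_{\vec{G}}(u,v)+\lambda_{\vec{G}}(v,u)\le \min\{d_G(u),d_G(v)\}$ always holds — this is the easy direction), and then to prove the reverse inequality $\lambda_{\vec{G}}(u,v)+\lambda_{\vec{G}}(v,u)\ge \min\{d_G(u),d_G(v)\}$ for \emph{every} important pair simultaneously, using the assumption that $\vec{H}$ attains the target value $k$. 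The point of attaching the tubes $T_v$ of size $n^5$ (resp. $n^2$) is that in $\vec{H}$, for $x\in V(T_u)$ and $y\in V(T_v)$ one has $\lambda_{\vec{H}}(x,y)=\min\{\lambda_{\vec{H}}(x,u),\lambda_{\vec{H}}(u,v),\lambda_{\vec{H}}(v,u')\}$ where $u',$ etc., track the tube structure, so the enormous multiplicities ${n^5\choose 2}$, $n^{10}=|V(T_u)|\,|V(T_v)|$ dominate the count and force each relevant tube-to-tube connectivity to be as large as possible.

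The main steps, in order. First I would record the trivial upper bound $\lambda_{\vec{G}}(u,v)+\lambda_{\vec{G}}(v,u)\le\min\{d_G(u),d_G(v)\}$ valid for all orientations and all pairs, and note that consequently
\[
\sum_{\{x,y\}\subseteq V(H)}\lambda_{\vec{H}}(x,y)+\lambda_{\vec{H}}(y,x)\le k,
\]
with equality forcing equality in each summand that appears in the lower-bound computation carried out in the ``if'' direction. So from $\ge k$ we get equality everywhere. Second, I would show that for any $v\in V_4\cup V_3\cup\{a,a'\}$ and $x,y\in V(T_v)$ with $x$ separating $y$ from $v$, we have $\lambda_{\vec{H}}(x,y)+\lambda_{\vec{H}}(y,x)= d_G(v)$ — this is again just the trivial bound plus equality in the global sum (the tube edges all lie on the path, so the min-cut through a tube is exactly $d_{\vec{G}}^+(v)+d_{\vec{G}}^-(v)=d_G(v)$). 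Third, and this is the crux, for an important pair $\{u,v\}$ I would pick $x$ deep in $T_u$ and $y$ deep in $T_v$, express $\lambda_{\vec{H}}(x,y)$ as a minimum over: the ``pipe'' capacities of the two tubes (which are $d_{\vec{G}}^+(u)$ and $d_{\vec{G}}^-(v)$ in the relevant direction by step two / the construction) and the capacity $\lambda_{\vec{G}}(u,v)$ across $G$ itself; then equality in the global sum forces this minimum to be $\min\{d_G(u),d_G(v)\}$, and I would argue that the tube pipes cannot be the bottleneck — if, say, $d_{\vec{G}}^+(u)<$ the target, the huge coefficient $n^{10}$ (or $n^7$) multiplying the deficit would push the total strictly below $k$, a contradiction. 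Unpacking this, one gets $\lambda_{\vec{G}}(u,v)\ge\min\{d_{\vec{G}}^+(u),d_{\vec{G}}^-(v)\}$ and symmetrically, and then a short degree argument ($d_{\vec{G}}^+(u)+d_{\vec{G}}^-(u)=d_G(u)$, etc.) combined with $\lambda_{\vec{G}}(u,v)\le d_{\vec{G}}^+(u)$ yields $\lambda_{\vec{G}}(u,v)+\lambda_{\vec{G}}(v,u)=\min\{d_G(u),d_G(v)\}$.

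I expect the main obstacle to be the careful bookkeeping in the third step: one must show that the ``loss'' incurred by any shortfall in a single tube-to-tube connectivity (or in a within-tube connectivity) is large enough — specifically, comparing the polynomial orders $n^{10}$, $3n^7$, and ${n^5\choose 2}\sim n^{10}/2$ against the total number of pairs $O(n^{10}\cdot n)$ coming from pairs $\{x,y\}$ with $x,y$ in tubes of \emph{different} (non-important) vertices and from pairs involving at most one tube — this is exactly why $n\ge 13$ and the fifth/second powers are chosen as they are. The estimate must be arranged so that a deficit of even $1$ in any one of the ``counted'' connectivities is not compensable by the at most $O(n^{10})$ uncounted pairs, each contributing at most $O(n^{5})$ (two tube vertices from distinct non-important groups contribute $O(\max d_G)=O(n)$ each but there are at most $n^2\cdot n^{10}$... ) — I would need to track these orders precisely, but morally the gap between what is forced ($k$) and what is available from unaccounted-for pairs is $\Theta(n^{12})$ versus a per-unit-deficit penalty of $\Theta(n^{10})$, and $n\ge 13$ makes this work. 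The remaining claims about well-balancedness and the degree bound $d_{\vec{G}}^+(v)\le\ell(v)$ will then follow in subsequent claims from this equality together with properties $(a)$--$(g)$ and Proposition \ref{char}.
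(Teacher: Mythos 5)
Your high-level strategy — a counting argument exploiting the huge tube multiplicities to force equality in each ``counted'' connectivity — is the right one and matches the spirit of the paper's proof. But there is a concrete gap in your first step. The claim that
\[
\sum_{\{x,y\}\subseteq V(H)}\lambda_{\vec{H}}(x,y)+\lambda_{\vec{H}}(y,x)\le k
\]
is false. The target $k$ is assembled from exactly three classes of pairs: pairs inside a $V_4\cup\{a,a'\}$-tube, pairs between the tubes of a super-important pair, and pairs between the tubes of an important-but-not-super-important pair. It makes no allowance for the remaining pairs, namely those touching a bare $V_3'$-vertex or those with both endpoints inside $V_3$-tubes (the set the paper calls $\mathcal U$). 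Those pairs can contribute up to $3$ each, and there are roughly $n^7$ of them, so the full sum can strictly exceed $k$. Consequently ``equality forced in each summand'' does not follow from $\sum\ge k$, and the second and third steps of your proposal, which lean on that equality, do not go through as written.

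The paper handles this by arguing contrapositively rather than via forced equality: assume a single important pair $\{u,v\}$ is deficient by at least $1$. That deficiency is multiplied by the smallest tube-pair coefficient, which is $n^{7}$ (arising from the $n^{2}\cdot n^{5}$ pairs between a $V_3$-tube and a $V_4\cup\{a,a'\}$-tube), so the counted part of the sum is at most $k-n^{7}$. Then one bounds the $\mathcal U$ contribution above by $3\bigl(\binom{n^{2}|V_3\cup V_3'|}{2}+n^{5}|V_3'|\,|V(G)-V_3'|\bigr)\le 3n^{6}+\tfrac34 n^{7}<n^{7}$ for $n\ge 13$, giving a strict deficit and a contradiction. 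The orders you invoke at the end, $\Theta(n^{12})$ against $\Theta(n^{10})$, do not match the real bookkeeping; the decisive comparison is between two $\Theta(n^{7})$ quantities whose constants are made to separate precisely by the choice $n\ge 13$, and identifying the set $\mathcal U$ explicitly is what keeps the ``uncounted'' contribution under control. Once you restate your plan as such a contradiction argument, the rest of your outline is sound.
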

\begin{proof}
Suppose otherwise, so $\lambda_{\vec{G}}(u,v)+\lambda_{\vec{G}}(v,u)\leq \min\{d_G(u),d_G(v)\}-1$ for some $\{u,v\}  \in \mathcal{I}$. We now define $\mathcal{U}=\{\{x,y\}\subseteq V(H)|\{x,y\}\cap V_3'\neq \emptyset \text{ or }\{x,y\}\subseteq \bigcup_{v \in V_3}V(T_v)\}$.

Observe that for every $\{x,y\}\subseteq V(H)$, exactly one of the following holds:
\begin{itemize}
\item $\{x,y\} \subseteq V(T_v)$ for some $v \in V_4 \cup \{a,a'\}$,
\item $x \in V(T_u)$ and $y \in V(T_v)$ for some $\{u,v\}\in \mathcal{I}$,
\item $\{x,y\} \in \mathcal{U}$.
\end{itemize}

By the assumption, we obtain 

\begin{align*}
\sum_{v \in V_4 \cup \{a,a'\}}\sum_{\{x,y\}\subseteq V(T_v)}(\lambda_{\vec{H}}(x,y)+\lambda_{\vec{H}}(y,x))&+\sum_{\{u,v\} \in \mathcal{I}}\sum_{x \in V(T_u)}\sum_{y \in V(T_v)}(\lambda_{\vec{H}}(x,y)+\lambda_{\vec{H}}(y,x))\\
&\leq\sum_{v \in V_4 \cup \{a,a'\}}\sum_{\{x,y\}\subseteq V(T_v)}d_G(v)\\&\hspace{3mm}+\sum_{\{u,v\} \in \mathcal{I}}\sum_{x \in V(T_u)}\sum_{y \in V(T_v)}\lambda_{\vec{G}}(u,v)+\lambda_{\vec{G}}(v,u)\\
&={n^5 \choose 2}\sum_{v \in V_4 \cup \{a,a'\}}d_G(v)+n^{10}\sum_{\{u,v\} \in \mathcal{S}\mathcal{I}}\lambda_{\vec{G}}(u,v)+\lambda_{\vec{G}}(v,u)\\&\hspace{3mm}+n^{7}\sum_{\{u,v\} \in \mathcal{I}-\mathcal{S}\mathcal{I}}\lambda_{\vec{G}}(u,v)+\lambda_{\vec{G}}(v,u)\\
&\leq {n^5 \choose 2}\sum_{v \in V_4 \cup \{a,a'\}}d_G(v)+n^{10}\sum_{\{u,v\} \in \mathcal{S}\mathcal{I}}\min\{d_G(u),d_G(v)\}\\&\hspace{3mm}+n^{7}\sum_{\{u,v\} \in \mathcal{I}-\mathcal{S}\mathcal{I}}\min\{d_G(u),d_G(v)\}-n^7\\
&=k-n^7.
\end{align*}

By definition of $\mathcal{U}, (c)$ and construction, we have $\lambda_{\vec{H}}(x,y)+\lambda_{\vec{H}}(y,x)\leq 3$ for every $\{x,y\}\in \mathcal{U}$. By definition of $\mathcal{U}$, construction and $\alpha(1-\alpha)\leq \frac{1}{4}$ for every $0 \leq \alpha \leq 1$, this yields 
\begin{align*}
\sum_{\{x,y\}\in \mathcal{U}}(\lambda_{\vec{H}}(x,y)+\lambda_{\vec{H}}(x,y))&\leq 3|\mathcal{U}|\\
&\leq 3({n^2|V_3 \cup V_3'|\choose 2}+n^5 |V_3'||V(G)-V_3'|)\\
&\leq 3 n^6+\frac{3}{4}n^7\\
&< n^7,
\end{align*}
as $n \geq 13$.
\medskip

We obtain \begin{align*}
\sum_{\{x,y\}\subseteq V(H)}(\lambda_{\vec{H}}(x,y)+\lambda_{\vec{H}}(y,x))&= \sum_{v \in V_4 \cup \{a,a'\}}\sum_{\{x,y\}\subseteq V(T_v)}(\lambda_{\vec{H}}(x,y)
+\lambda_{\vec{H}}(y,x))\\&\hspace{3mm}+\sum_{\{u,v\} \in \mathcal{I}}\sum_{x \in V(T_u)}\sum_{y \in V(T_v)}(\lambda_{\vec{H}}(x,y)+\lambda_{\vec{H}}(y,x))\\&\hspace{3mm}+\sum_{\{x,y\}\in \mathcal{U}}(\lambda_{\vec{H}}(x,y)+\lambda_{\vec{H}}(y,x))\\&< k-n^7+n^7\\&=k,
\end{align*}
 a contradiction.
\end{proof}
The next claim shows that for a large portion of the pairs of vertices, the connectivity among them satisfies some symmetry condition.
\begin{Claim}\label{connsymm}
For all $\{u,v\} \subseteq V_4\cup a$, we have $\min\{\lambda_{\vec{G}}(u,v),\lambda_{\vec{G}}(v,u)\}\geq\frac{1}{2}\min\{d_G(u),d_G(v)\}$.
\end{Claim}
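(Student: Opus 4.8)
The plan is to exploit the long tubes once more. Fix an orientation $\vec H$ of $H$ with $\sum_{\{x,y\}\subseteq V(H)}\lambda_{\vec H}(x,y)+\lambda_{\vec H}(y,x)\geq k$ and let $\vec G=\vec H[V(G)]$. For $v\in V_4\cup\{a,a'\}$ write $V(T_v)=\{v=v_1,\dots,v_{n^5}\}$ along the underlying path and, for $1\le i\le n^5-1$, let $p_i^v$ (respectively $q_i^v$) be the number of edges of the $i$-th bundle of $T_v$ that $\vec H$ orients away from $v$ (respectively towards $v$), so $p_i^v+q_i^v=d_G(v)$. Since $T_v$ meets the rest of $H$ only in $v$, a routine flow argument through the cut vertices $u$ and $v$ gives, for any two tube-carrying vertices $u,v$ and all indices $j,j'$,
$$\lambda_{\vec H}(u_j,v_{j'})=\min\{\min_{i<j}q_i^u,\ \lambda_{\vec G}(u,v),\ \min_{i<j'}p_i^v\}$$
(an empty minimum read as $d_G(u)$), and the analogous identity with $u$ and $v$ exchanged. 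In particular $\lambda_{\vec H}(u_j,v_{j'})+\lambda_{\vec H}(v_{j'},u_j)\le\lambda_{\vec G}(u,v)+\lambda_{\vec G}(v,u)=\min\{d_G(u),d_G(v)\}$ by Claim \ref{connasymm}, since every $\{u,v\}\subseteq V_4\cup\{a,a'\}$ lies in $\mathcal S\mathcal I$.

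Next I would argue, exactly as in the proof of Claim \ref{connasymm}, that the bound $\sum\lambda_{\vec H}(x,y)+\lambda_{\vec H}(y,x)\ge k$ forces every block of the decomposition used there to be within $n^7$ of its maximum; in particular, for every super important pair $\{u,v\}$ the cross-tube block satisfies $\sum_{x\in V(T_u),\,y\in V(T_v)}\lambda_{\vec H}(x,y)+\lambda_{\vec H}(y,x)\ge n^{10}\min\{d_G(u),d_G(v)\}-n^7$. Using the displayed formula, the ``deficit'' $\min\{d_G(u),d_G(v)\}-\lambda_{\vec H}(u_j,v_{j'})-\lambda_{\vec H}(v_{j'},u_j)$ of each of the $n^{10}$ terms is non-negative, and if $\min_{i<j}q_i^u<\lambda_{\vec G}(u,v)$ for some index $j$ then all $n^5$ terms in that ``row'' are deficient; hence, as $\min_{i<j}q_i^u$ and $\min_{i<j}p_i^u$ are non-increasing in $j$, at most $\approx n^2$ indices $j$ can violate $\min_{i<j}q_i^u\ge\lambda_{\vec G}(u,v)$ or $\min_{i<j}p_i^u\ge\lambda_{\vec G}(v,u)$. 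Combining this with $\min_{i<j}p_i^u+\min_{i<j}q_i^u\le d_G(u)$ and $\lambda_{\vec G}(u,v)+\lambda_{\vec G}(v,u)=\min\{d_G(u),d_G(v)\}$ yields: whenever $d_G(u)\le d_G(v)$, all but the last $O(n^2)$ bundles of $T_u$ satisfy $q_i^u=\lambda_{\vec G}(u,v)$ and $p_i^u=\lambda_{\vec G}(v,u)$. Fix once and for all a middle index $i_0=\lceil n^5/2\rceil$; then $q_{i_0}^u=\lambda_{\vec G}(u,v)$ and $p_{i_0}^u=\lambda_{\vec G}(v,u)$ whenever $d_G(u)\le d_G(v)$ and $\{u,v\}$ is super important.

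Now I read off $T_u$ from several partners. Fix $u\in V_4$, put $d=d_G(u)$, $\alpha_u=\lambda_{\vec G}(u,a)$, $\beta_u=\lambda_{\vec G}(a,u)$, so $\alpha_u+\beta_u=d$ by Claim \ref{connasymm}. Applying the previous paragraph to $\{u,a\}$ (valid since $d_G(u)<d_G(a)$) gives $q_{i_0}^u=\alpha_u$, $p_{i_0}^u=\beta_u$. By $(d)$ there are $v,w\in V_4\setminus\{u\}$ with $d_G(v)=d_G(w)=d$, since every vertex of $V(G)$ other than those in $V_4$ has degree $3$ or degree $d_G(a)>d$. Applying the previous paragraph to $\{u,v\}$, $\{u,w\}$ and $\{v,w\}$ (using the tube of either endpoint, the relevant degrees being equal) and comparing middle bundles gives $\alpha_u=\lambda_{\vec G}(u,v)=\beta_v$, $\beta_u=\lambda_{\vec G}(v,u)=\alpha_v$, and likewise $\alpha_u=\beta_w$, $\beta_u=\alpha_w$, $\alpha_v=\beta_w$, $\beta_v=\alpha_w$. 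Chaining, $\alpha_u=\beta_v=\alpha_w=\beta_u$, hence $\alpha_u=\beta_u=d/2$. Thus $\lambda_{\vec G}(u,a)=\lambda_{\vec G}(a,u)=d_G(u)/2$ for every $u\in V_4$. Finally I conclude the statement: if $V_4\cup a$ has at most one element there is nothing to prove; for $\{u,v\}$ with $u=a$ the previous line is the claim; and for $\{u,v\}\subseteq V_4$ with, say, $d_G(u)\le d_G(v)$, the second paragraph gives $\lambda_{\vec G}(u,v)=q_{i_0}^u=\alpha_u=d_G(u)/2$ and $\lambda_{\vec G}(v,u)=p_{i_0}^u=\beta_u=d_G(u)/2$, so $\min\{\lambda_{\vec G}(u,v),\lambda_{\vec G}(v,u)\}=\tfrac12\min\{d_G(u),d_G(v)\}$.

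The main obstacle is the second paragraph: converting the single scalar inequality ``cross-tube deficit at most $n^7$'' into the structural statement that essentially all bundles of $T_u$ are oriented in one prescribed way. This is the one-tube analogue of the counting in the proof of Claim \ref{connasymm}, and the delicate point is that the monotonicity of the partial minima $\min_{i<j}q_i^u$ and $\min_{i<j}p_i^u$, together with $\min_{i<j}p_i^u+\min_{i<j}q_i^u\le d_G(u)$, forces the orientation of all but $O(n^2)$ bundles as soon as only $O(n^2)$ rows are allowed to be deficient; the rest of the argument, in particular the triangulation over three equal-degree vertices supplied by $(d)$, is then short.
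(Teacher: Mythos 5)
Your proposal is correct, but it is considerably more involved than the paper's argument and follows a genuinely different route. You descend back into $\vec{H}$: you set up exact formulas for $\lambda_{\vec H}(u_j,v_{j'})$ in terms of the bundle orientations $p_i^u,q_i^u$, use the deficit bound $D_2<n^7$ (which you re-extract from the counting in the proof of Claim~\ref{connasymm}) together with the monotonicity of $\min_{i<j}q_i^u$ and $\min_{i<j}p_i^u$ to pin down the orientation of a fixed middle bundle $i_0$ of every tube $T_u$ with $u\in V_4\cup a$, and then exploit the triple $u,v,w$ of equal-degree vertices supplied by condition $(d)$ to chain equalities $\alpha_u=\beta_v=\alpha_w=\beta_u$. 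This works, and I checked the two delicate points you flagged: the row-deficit counting does force at most $\approx n^2$ bad rows, and $i_0=\lceil n^5/2\rceil$ comfortably clears that threshold for $n\ge 13$.

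The paper, by contrast, never returns to $\vec H$ at all. It treats Claim~\ref{connasymm} as a black box and argues by contradiction purely at the level of $\vec G$: if some $\lambda_{\vec G}(u,v)<\frac12\min\{d_G(u),d_G(v)\}$ with $d_G(v)\le d_G(u)$, then $d_{\vec G}^-(v)<\frac12 d_G(v)$; picking two further vertices $v',v''\in V_4$ of the same degree (again via $(d)$) and applying Claim~\ref{connasymm} to $\{v,v'\}$ and $\{v,v''\}$ along with the trivial bounds $\lambda_{\vec G}(x,y)\le d_{\vec G}^-(y)$ forces $d_{\vec G}^+(v')<\frac12 d_G(v)$ and $d_{\vec G}^+(v'')<\frac12 d_G(v)$, whence $\lambda_{\vec G}(v',v'')+\lambda_{\vec G}(v'',v')<d_G(v)$, contradicting Claim~\ref{connasymm} for $\{v',v''\}$. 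Both approaches hinge on the cardinality-three hypothesis in $(d)$, but the paper spends it on in/out-degrees of $\vec G$ while you spend it on middle-bundle orientations of the tubes. The paper's route buys brevity and avoids re-opening the deficit bookkeeping; your route buys a slightly stronger conclusion (explicit equalities $q_{i_0}^u=\lambda_{\vec G}(u,a)=d_G(u)/2$), but that extra information is not needed downstream. If you intend to keep your version, the one thing to make explicit is the max-flow/min-cut justification of the identity $\lambda_{\vec H}(u_j,v_{j'})=\min\{\min_{i<j}q_i^u,\lambda_{\vec G}(u,v),\min_{i<j'}p_i^v\}$, which uses that $u$ and $v$ are cut vertices separating their tubes from the rest of $H$.
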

\begin{proof}
Suppose otherwise, so there are $u,v \in V_4 \cup a$ with $\lambda_{\vec{G}}(u,v)<\frac{1}{2}\min\{d_G(u),d_G(v)\}$.

 By symmetry, we may suppose that $d_G(v)\leq d_G(u)$, in particular $v \in V_4$ by $(a)$. We obtain by Claim \ref{connasymm} and $\{u,v\}\in \mathcal{I}$ that $\lambda_{\vec{G}}(v,u)=\min\{d_G(u),d_G(v)\}-\lambda_{\vec{G}}(u,v)>\frac{1}{2}\min\{d_G(u),d_G(v)\}=\frac{1}{2}d_G(v)$. This yields $d_{\vec{G}}^-(v)=d_G(v)-d_{\vec{G}}^+(v)\leq d_G(v)-\lambda_{\vec{G}}(v,u)<\frac{1}{2}d_G(v)$. By the last part of $(d)$, there are two vertices $v',v''\in V_4 \cup a$ such that $d_G(v)=d_G(v')=d_G(v'')$. By Claim \ref{connasymm}, we have $d_G(v)=\min\{d_G(v),d_G(v')\}=\lambda_{\vec{G}}(v,v')+\lambda_{\vec{G}}(v',v)\leq d_{\vec{G}}^-(v')+d_{\vec{G}}^-(v)$. This yields $d_{\vec{G}}^+(v')=d_G(v')-d_{\vec{G}}^-(v')=d_G(v)-d_{\vec{G}}^-(v')\leq d_{\vec{G}}^-(v)<\frac{1}{2}d_G(v)$. Similarly, we have $d_{\vec{G}}^+(v'')<\frac{1}{2}d_G(v)$. We obtain $\lambda_{\vec{G}}(v',v'')+\lambda_{\vec{G}}(v'',v')\leq d_{\vec{G}}^+(v')+d_{\vec{G}}^+(v'')<d_G(v)=\min\{d_G(v'),d_G(v'')\}$, a contradiction to Claim \ref{connasymm}.
\end{proof}
We now show that also pairs involving vertices of degree 3 satisfy some connectivity conditions.
\begin{Claim}\label{strong}
$\vec{G}$ is strongly connected.
\end{Claim}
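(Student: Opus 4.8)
The plan is to argue by contradiction, re‑running (and refining) the counting estimate from the proof of Claim \ref{connasymm}. Suppose $\vec{G}$ is not strongly connected. By Claim \ref{connsymm} we have $\lambda_{\vec{G}}(u,v),\lambda_{\vec{G}}(v,u)\geq 2$ for all $\{u,v\}\subseteq V_4\cup a$, so $V_4\cup a$ lies in a single strong component $C_0$ of $\vec{G}$. Reversing all arcs of $\vec{H}$ leaves $tac(\vec{H})$ unchanged and preserves Claims \ref{connasymm} and \ref{connsymm} (both are symmetric in the two directions), so we may assume that $\vec{G}$ has a strong component $C$ with $C\cap C_0=\emptyset$ and $\delta_{\vec{G}}^+(C)=\emptyset$; then $C\subseteq V_3\cup V_3'\cup\{a'\}$. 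A short argument shows $C$ is essentially a single vertex: if $v\in C\cap(V_3\cup\{a'\})$, then by $(e)$ (resp. $(b)$) every edge of $G$ incident to $v$ leaves $C$, hence is oriented into $C$; so $d_{\vec{G}}^+(v)=0$, and since $v$ has no edge inside $C$, strong connectivity of $C$ forces $C=\{v\}$. Thus $C=\{a'\}$, or $C=\{v\}$ with $v\in V_3$, or $C\subseteq V_3'$.

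The next step transports $C$ to $\vec{H}$. Set $\hat{C}=C\cup\bigcup_{w\in C}\big(V(T_w)\setminus\{w\}\big)$, with the convention $V(T_w)=\{w\}$ for $w\in V_3'$ (which carries no attached tube). The only edges of $H$ between $\hat{C}$ and $V(H)\setminus\hat{C}$ are the edges of $\delta_G(C)$, all of which are oriented into $C\subseteq\hat{C}$ in $\vec{H}$; hence $\delta_{\vec{H}}^+(\hat{C})=\emptyset$, and therefore $\lambda_{\vec{H}}(z,w)=0$ for every $z\in\hat{C}$ and every $w\in V(H)\setminus\hat{C}$.

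Finally, I would bound $tac(\vec{H})$ exactly as in Claim \ref{connasymm}, splitting the pairs of $V(H)$ into those inside one tube $T_v$ with $v\in V_4\cup\{a,a'\}$ (contributing at most $\binom{n^5}{2}d_G(v)$), those with endpoints in the two tubes of an important pair $\{u,v\}$ (contributing at most $|V(T_u)||V(T_v)|\min\{d_G(u),d_G(v)\}$, using Claim \ref{connasymm} and that $u,v$ are cut‑vertices of $H$), and those in $\mathcal{U}$ (contributing less than $n^7$). The extra savings come from $\hat{C}$: since every $v\in C$ has a neighbour $u\in V_4$ (by $(e)$, $(f)$ or $(b)$), the across‑tube pairs $\{z,w\}$ with $z\in V(T_v)$ and $w\in V(T_u)$ already satisfy $\lambda_{\vec{H}}(z,w)=0$; moreover, because any non‑uniformly oriented tube $T_u$ would on its own cost more than $n^7$, the near‑optimality of all important pairs forces the orientations of $T_u$ and of $T_v$ to be pinned down, and tracing the corresponding flows along the tubes shows that these pairs (together with the within‑tube pairs of $T_{a'}$ when $C=\{a'\}$) lose a total exceeding $n^7$. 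Hence $tac(\vec{H})<k$, contradicting that $(H,k)$ is a positive instance. The hard part is precisely this last bookkeeping: one must propagate the constraints ``every important pair is at its maximum'' and ``every tube is oriented uniformly'' through several cases — according to whether $C=\{a'\}$, $C=\{v\}$ with $v\in V_3$, or $C\subseteq V_3'$, and according to whether some vertex of $V_4$ has degree exactly $4$ or all have degree at least $6$ — and carry out the max‑flow estimates along the tubes carefully enough to extract a loss of strictly more than $n^7$.
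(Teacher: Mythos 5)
Your opening observations are sound: Claim \ref{connsymm} puts $V_4\cup a$ in one strong component, and if a distinct component $C$ exists we may assume by full arc reversal that $\delta^+_{\vec{G}}(C)=\emptyset$. The problem is everything after that. You then retreat all the way to $\vec{H}$ and try to re‑run the global $tac$‑counting with an extra savings term, and you explicitly leave the decisive step unproved (``the hard part is precisely this last bookkeeping\dots''). That step is not a routine verification: to extract a loss exceeding $n^7$ you would need to reason simultaneously about how close each within‑tube sum is to $\binom{n^5}{2}d_G(v)$, how close each across‑tube sum is to its nominal value, and how the forced tube orientations propagate — across several case distinctions that you only name. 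As written there is no proof here, only a plan, and the plan is substantially harder than what is actually needed.

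The paper avoids the detour entirely and closes the argument inside $\vec{G}$ in a few lines. Having assumed $d^+_{\vec{G}}(V(C'))=0$ for a sink component $C'$ that meets $V_3\cup V_3'$, properties $(e)$/$(f)$ give a vertex $v\in V_4$ adjacent to $C'$; by Claim \ref{connsymm}, $d^-_{\vec{G}}(v)\geq\lambda_{\vec{G}}(a,v)\geq\tfrac12 d_G(v)$, hence $d^+_{\vec{G}}(v)\leq\tfrac12 d_G(v)$, and then
\[
\lambda_{\vec{G}}(v,a)\leq d^+_{\vec{G}}(V(C')\cup\{v\})=d^+_{\vec{G}}(v)-d_G(V(C'),v)\leq \tfrac12 d_G(v)-1,
\]
contradicting Claim \ref{connsymm}. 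The only remaining case $C'=\{a'\}$ is killed by Claim \ref{connasymm} applied to the important pair $\{a',v\}$ for any $v\in V_3$: both $d^+_{\vec{G}}(a')$ (or $d^-_{\vec{G}}(a')$) and $d^+_{\vec{G}}(v)$ are forced small, so $\lambda_{\vec{G}}(a',v)+\lambda_{\vec{G}}(v,a')\leq 2<3$. So the two claims you already invoke suffice as local cut bounds on $\vec{G}$; you do not need to return to $\vec{H}$, and absent a completed counting argument your proposal has a genuine gap exactly where you flag the ``hard part.''
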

\begin{proof}
By Claim \ref{connsymm}, we have that $V_4 \cup a$ is contained in a strongly connected component $C$ of $\vec{G}$. Suppose for the sake of a contradiction that there is a distinct strongly connected component $C'$ of $\vec{G}$. We have either $V(C')\subseteq V_3 \cup V_3'$ or $V(C')=\{a'\}$. First suppose that $V(C')\subseteq V_3 \cup V_3'$. By symmetry, we may suppose that $d_{\vec{G}}^+(V(C'))=0$. By $(e)$ and $(f)$, there is some $v \in V_4 \cap N_G(V(C'))$. By Claim \ref{connsymm}, we have $d_{\vec{G}}^-(v)\geq \lambda_{\vec{G}}(a,v)=\frac{1}{2}d_G(v)$, so $d_{\vec{G}}^+(v)=d_G(v)-d_{\vec{G}}^-(v)\leq \frac{1}{2}d_G(v)$. This yields $d_{\vec{G}}^+(V(C')\cup v)=d_{\vec{G}}^+(V(C'))+d_{\vec{G}}^+(v)-d_G(v,V(C'))\leq \frac{1}{2}d_G(v)-1$. We obtain $\lambda_{\vec{G}}(v,a)\leq d_{\vec{G}}^+(V(C')\cup v)\leq \frac{1}{2}d_G(v)-1=\frac{1}{2}\min\{d_G(v),d_G(a)\}-1$, a contradiction to Claim \ref{connsymm}. Hence $\vec{G}$ is strongly connected in $V(G)-a'$. In particular, by $(c)$, we have $1 \leq d_{\vec{G}}^+(v)\leq 2$ for every $v \in V_3$. Now suppose that $V(C')=\{a'\}$, say $d_{\vec{G}}^+(a')=0$. We obtain $\lambda_{\vec{G}}(a',v)+\lambda_{\vec{G}}(v,a')\leq d_{\vec{G}}^+(a')+d_{\vec{G}}^+(v)=0+2=2$ for some arbitrary $v \in V_3$. This contradicts Claim \ref{connsymm}.
\end{proof}

Possibly reversing all arcs of $\vec{G}$, we may suppose that  $d_{\vec{G}}(a',a)\geq \lceil\frac{1}{2}d_{G}(a',a)\rceil=2$. We are now ready to conclude that the degree condition is satisfied. 

\begin{Claim}\label{schluss}
For all $v \in V_3$, we have $d_{\vec{G}}^+(v)=1$.
\end{Claim}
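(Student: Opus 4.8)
The plan is a short proof by contradiction that isolates the role of the auxiliary vertex $a'$ together with the normalization $d_{\vec G}(a',a)\geq 2$ established just before the statement. First I would note that there is essentially nothing to rule out except $d_{\vec G}^+(v)=2$: by $(c)$ we have $d_G(v)=3$, and by Claim \ref{strong} the orientation $\vec G$ is strongly connected, so $1\leq d_{\vec G}^+(v)\leq 2$. I would then assume $d_{\vec G}^+(v)=2$, i.e. $d_{\vec G}^-(v)=1$, and aim for a contradiction.

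The heart of the argument is that $a'$ behaves like a pendant hanging off $a$. By $(b)$, $d_G(a')=3=d_G(a,a')$, so every edge incident to $a'$ goes to $a$; hence $d_{\vec G}^+(a')=d_{\vec G}(a',a)\geq 2$ by the normalization, and therefore $d_{\vec G}^-(a')\leq 1$. Since $V(G)-a'$ is a $v\bar{a'}$-set (note $v\neq a'$ as $v\in V_3$ while $a'\notin V_3$) and every arc leaving it enters $a'$, we get $d_{\vec G}^+(V(G)-a')=d_{\vec G}^-(a')\leq 1$, so $\lambda_{\vec G}(v,a')\leq 1$. On the other hand $\{a',v\}$ is important, since neither $a'$ nor $v$ lies in $V_3'$ and only $v$ lies in $V_3$, so Claim \ref{connasymm} applies and yields $\lambda_{\vec G}(a',v)+\lambda_{\vec G}(v,a')=\min\{d_G(a'),d_G(v)\}=3$. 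As $\lambda_{\vec G}(a',v)\leq d_{\vec G}^-(v)=1$, this forces $\lambda_{\vec G}(v,a')\geq 2$, contradicting the previous bound. Hence $d_{\vec G}^+(v)=1$.

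I do not expect a genuine obstacle here; all the heavy lifting is already packaged into Claims \ref{connasymm}, \ref{connsymm} and \ref{strong}, and no tube or counting argument is needed for this particular step. The only points that require care are formal: verifying that $\{a',v\}\in\mathcal I$ so that Claim \ref{connasymm} is legitimately applicable, and observing that the global arc reversal used to normalize $d_{\vec G}(a',a)$ is harmless — each of Claims \ref{connasymm}, \ref{connsymm}, \ref{strong} is a statement about sums of the form $\lambda(u,v)+\lambda(v,u)$, symmetric minima, or strong connectivity, hence invariant under reversing every arc of $\vec G$, so all of them remain available after the reversal.
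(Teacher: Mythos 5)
Your proof is correct and follows essentially the same route as the paper: assume $d_{\vec G}^+(v)=2$, bound both $\lambda_{\vec G}(a',v)\leq d_{\vec G}^-(v)\leq 1$ and $\lambda_{\vec G}(v,a')\leq d_{\vec G}^-(a')\leq 1$ using the normalization $d_{\vec G}(a',a)\geq 2$, and contradict the identity for the important pair $\{a',v\}$. As a minor note, the paper's text cites Claim \ref{connsymm} at this point, but the condition ``$\{a',v\}\in\mathcal{I}$'' shows the intended reference is Claim \ref{connasymm}, which is precisely the claim you invoke.
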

\begin{proof}
Suppose otherwise, so there is some $v \in V_3$ with $d_{\vec{G}}^-(v)\leq d_G(v)-d_{\vec{G}}^+(v)\leq 3-2=1$. By the assumption on $\vec{G}$, we obtain $\lambda_{\vec{G}}(a',v)+\lambda_{\vec{G}}(v,a')\leq d_{\vec{G}}^-(v)+d_{\vec{G}}^-(a')\leq 1+1=2$, a contradiction to Claim \ref{connsymm} as $\{a',v\}\in \mathcal{I}$.
\end{proof}

By Claims \ref{connsymm} and \ref{strong}, and $(c)$, we have that $\vec{G}$ is well-balanced. By Claim \ref{schluss} and $(g)$, we have $d_{\vec{G}}^+(v)\leq \ell(v)$ for all $v \in V(G)$. Hence $(G,\ell)$ is a positive instance of SSUBWBO.

As $H$ is polynomial in the size of $G$ and by Lemma \ref{2hard}, we obtain Theorem \ref{opthard}.

\section{Approximation algorithm}\label{appro}


We here prove the following, more technical restatement of Theorem \ref{mainapp}.
\begin{Theorem}
There is a polynomial time algorithm whose input is an undirected graph $G$ and that computes an orientation $\vec{G}_1$ of $G$ such that $tac(\vec{G}_1)\geq \frac{2}{3}tac(\vec{G})$ for every orientation $\vec{G}$ of $G$.
\end{Theorem}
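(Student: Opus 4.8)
The plan is to combine the two algorithmic tools provided: Gabow's algorithm (Proposition \ref{gabow}) producing a well-balanced orientation, and the Hakimi--Schmeichel--Young algorithm (Proposition \ref{reach}) producing an orientation maximizing the sum of ``reachability indicators'' $\sum_{\{u,v\}}\min\{\lambda(u,v),1\}+\min\{\lambda(v,u),1\}$. First I would reduce to the case where $G$ is $2$-edge-connected: apply Proposition \ref{2ec} to get the partition $(S_1,\ldots,S_t)$ of $V(G)$ into $2$-edge-connected pieces, handle each piece $G[S_i]$ separately, and orient the remaining (bridge) edges arbitrarily. Since $\lambda_G(u,v)\le 1$ whenever $u,v$ lie in different pieces, in any orientation $\vec G$ the only pairs contributing more than what a single reachability term captures are those inside a common piece; so an additive/multiplicative guarantee piece-by-piece will assemble into the global guarantee, provided the reachability-maximizing orientation on the whole graph is at least as good across pieces — I would in fact just run Proposition \ref{reach} on all of $G$ to take care of all cross-piece and within-piece ``$\ge 1$'' contributions at once, and separately run Proposition \ref{gabow} on each $2$-edge-connected piece (or on all of $G$) to capture the ``bulk'' contributions, then output whichever of the two orientations is better.

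\textbf{Key steps.} (1) Let $\vec G_{wb}$ be a well-balanced orientation of $G$ from Proposition \ref{gabow}. For each pair $\{u,v\}$ with $\lambda_G(u,v)=\ell$, well-balancedness gives $\lambda_{\vec G_{wb}}(u,v)+\lambda_{\vec G_{wb}}(v,u)\ge 2\lfloor \ell/2\rfloor \ge \tfrac{2}{3}\ell$ when $\ell$ is even or $\ell\ge 3$, and $=0$ only when $\ell=1$. So $tac(\vec G_{wb})\ge \tfrac23\sum_{\{u,v\}}\lambda_G(u,v) - (\text{correction for pairs with }\lambda_G=1)$; more precisely $tac(\vec G_{wb})\ge \tfrac{2}{3}\big(\sum_{\{u,v\}}\lambda_G(u,v)\big) - \tfrac23\cdot(\#\text{ pairs with }\lambda_G(u,v)=1)$. (2) Let $\vec G_{r}$ be a reachability-maximizing orientation from Proposition \ref{reach}. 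By Robbins (Theorem \ref{rob}) each $2$-edge-connected piece can be oriented strongly, so the optimum of the reachability objective is exactly $\sum_{\{u,v\}}\min\{\lambda_G(u,v),1\}+\min\{\lambda_G(v,u),1\}$; hence $tac(\vec G_r)\ge$ (number of ordered reachable pairs in the best case) $\ge 2\cdot(\#\text{ pairs with }\lambda_G(u,v)\ge 1)$, and in particular $tac(\vec G_r)\ge 2\cdot(\#\text{ pairs with }\lambda_G(u,v)=1)$. (3) Now use the trivial upper bound $tac(\vec G)\le \sum_{\{u,v\}}\lambda_G(u,v)$ for every orientation $\vec G$, valid since $\lambda_{\vec G}(u,v)\le \lambda_G(u,v)$. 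Writing $A=\sum_{\{u,v\}}\lambda_G(u,v)$ and $B=\#\{\{u,v\}:\lambda_G(u,v)=1\}$, we have $tac(\vec G_{wb})\ge \tfrac23 A - \tfrac23 B$ and $tac(\vec G_r)\ge 2B$, while $OPT\le A$. Taking the better of the two orientations and a suitable convex combination of the two lower bounds (weights $1$ and $\tfrac13$) gives $\max\{tac(\vec G_{wb}),tac(\vec G_r)\}\ge \tfrac34\big(tac(\vec G_{wb})+\tfrac13 tac(\vec G_r)\big)\ge \tfrac34\big(\tfrac23 A-\tfrac23 B+\tfrac23 B\big)=\tfrac12 A$ — which only yields $\tfrac12$, so the weighting must be chosen more carefully, or the bound on $tac(\vec G_{wb})$ refined using that pairs with $\lambda_G(u,v)=1$ still contribute via $\vec G_r$ and that $2\lfloor \ell/2\rfloor/\ell \ge 2/3$ is tight only at $\ell=3$.

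\textbf{Sharpening.} The clean way is: set $\vec G_1$ to be whichever of $\vec G_{wb}$, $\vec G_r$ has larger $tac$. For any orientation $\vec G$ and any pair $\{u,v\}$ with $\lambda_G(u,v)=\ell$, we have $\lambda_{\vec G}(u,v)+\lambda_{\vec G}(v,u)\le \ell$. Compare this to what our two orientations jointly achieve on that pair: $\vec G_{wb}$ gives at least $2\lfloor \ell/2\rfloor$, and $\vec G_r$ gives at least $2\min\{\ell,1\}$ (by strong orientability of $2$-edge-connected pieces — here I would be careful that Proposition \ref{reach} is applied to all of $G$ so every pair with $\lambda_G\ge1$ is covered). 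For $\ell\ge 2$, $2\lfloor\ell/2\rfloor \ge \tfrac23\ell$; for $\ell=1$, $2\min\{\ell,1\}=2\ge\tfrac23\cdot 1$. Hence for every pair, $\max$ of the two per-pair contributions is $\ge \tfrac23\ell \ge \tfrac23(\lambda_{\vec G}(u,v)+\lambda_{\vec G}(v,u))$. Summing over pairs: since the same orientation ($\vec G_{wb}$) is best for all pairs with $\ell\ge2$ simultaneously and the same orientation ($\vec G_r$) is best for all pairs with $\ell=1$ simultaneously, we get $tac(\vec G_{wb})\ge \tfrac23\sum_{\ell\ge 2}(\cdots)$ and $tac(\vec G_r)\ge \tfrac23\sum_{\ell=1}(\cdots)$ where the sums are over the corresponding pairs evaluated at the fixed $\vec G$. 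Taking the maximum of the two orientations gives $tac(\vec G_1)\ge \tfrac23\max\{S_{\ge2},S_1\}\ge \tfrac13(S_{\ge2}+S_1)$ — again $\tfrac13$, too weak. The genuine fix (the \emph{main obstacle}) is that we need \emph{one} orientation achieving $\tfrac23$ on \emph{all} pairs at once; the natural candidate is $\vec G_{wb}$ itself, which already gives $\ge\tfrac23(\lambda_{\vec G}(u,v)+\lambda_{\vec G}(v,u))$ on every pair with $\lambda_G$ even or $\ge 3$, and the only deficient pairs are those with $\lambda_G(u,v)=1$. So I expect the real argument to be: run Gabow inside each $2$-edge-connected piece but \emph{additionally} ensure strong connectivity of each piece (a well-balanced orientation of a $2$-edge-connected graph is strongly connected, since $\lfloor\tfrac12\lambda_G(u,v)\rfloor\ge 0$ is not enough — wait, $\lambda_G\ge2$ there gives $\ge1$, so it \emph{is} strongly connected), hence within each $2$-edge-connected piece $\vec G_{wb}$ already gives each pair $\ge\max\{2\lfloor\ell/2\rfloor, 2\}\ge\tfrac23\ell$ for all $\ell\ge1$; and across pieces $\ell\le 1$ so the pair contributes $\le 2$ in the best orientation, handled by $\vec G_r$. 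Thus the clean statement is $tac(\vec G_{wb})\ge\tfrac23\cdot(\text{within-piece part of }OPT)$ and $tac(\vec G_r)\ge(\text{cross-piece part of }OPT)$; taking the better of the two and noting the cross-piece part is at most half of... — the honest resolution is simply to combine: orient each $2$-edge-connected piece by Gabow (strongly connected, so $\ge\tfrac23$ per within-piece pair) and orient the bridge-forest part to maximize reachability among the pieces via Proposition \ref{reach} restricted to the ``block tree'', which an optimum orientation can make a transitive-tournament-like DAG, capturing at least $\tfrac23$ — indeed exactly the $tac$-optimal tree orientation — of the cross-piece contribution. I would expect the write-up to formalize exactly this split, with the tree case handled by Proposition \ref{reach} (or the cited tree result), and the main obstacle being the bookkeeping that the bridge edges form a structure on which the reachability optimum and the $tac$ optimum agree up to factor $\tfrac23$, plus verifying no double-counting or cross-term loss when gluing the piece orientations to the bridge orientation.
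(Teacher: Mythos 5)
Your final plan matches the paper's: decompose $V(G)$ via Proposition \ref{2ec} into $2$-edge-connected pieces, give each piece a well-balanced orientation via Proposition \ref{gabow} (which is strongly connected and gives every within-piece pair at least a $\frac{2}{3}$-fraction of its local edge-connectivity, since $2\lfloor \ell/2\rfloor \ge \frac{2}{3}\ell$ for $\ell\ge 2$), orient the remaining bridge edges according to a reachability-maximizing orientation from Proposition \ref{reach} so that cross-piece pairs (exactly those with $\lambda_G\le 1$) are handled at full value, and you correctly diagnose that taking the better of two separate orientations is too lossy and that one needs a single combined orientation. The one wrinkle in your sketch is that invoking Proposition \ref{reach} ``restricted to the block tree'' would need a vertex-weighted reachability optimizer (contracted pieces have different sizes), which the paper sidesteps by running Proposition \ref{reach} on $G$ itself, keeping only the resulting bridge-edge orientations, and then arguing via an intermediate orientation $\vec{G'}$ (the competitor's bridge orientations glued to the strongly connected pieces) that replacing piece interiors by strongly connected orientations only improves cross-piece reachability.
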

\begin{proof}
Let $G$ be a graph. Next, let $\vec{G_0}$ be an orientation that maximizes $\sum_{\{u,v\}\subseteq V(G)}(\min\{\lambda_{\vec{G_0}}(u,v),1\}+\min\{\lambda_{\vec{G_0}}(v,u),1\})$ over all orientations of $G$. Further, let $(S_1,\ldots,S_t)$ be the unique partition of $V(G)$ such that $H_i=G[S_i]$ is 2-edge-connected and $\lambda_G(u,v)\leq 1$ for every $\{u,v\}\subseteq V(G)$ such that $u \in S_i$ and $v \in S_j$ for some $i,j \in \{1,\ldots,t\}$ with $i \neq j$. Observe that this partition exists by Proposition \ref{2ec}. For $i=1,\ldots,t$, let $\vec{H_i}$ be a well-balanced orientation of $H_i$. We now create an orientation $\vec{G_1}$ of $G$ in the following way: For $i=1,\ldots,t$, we let all the edges in $E(H_i)$ have the orientation they have in $\vec{H_i}$. We let all the remaining edges have the orientation they have in $\vec{G_0}$. By Propositions \ref{2ec},\ref{gabow} and \ref{reach}, we can compute $\vec{G_1}$ in polynomial time. In the following, we prove that $\vec{G_1}$ has the desired properties. We let $\mathcal{P}=\{\{u,v\}\subseteq V(G)\}, \mathcal{P}_{\leq 1}=\{\{u,v\}\subseteq V(G)|\lambda_G(u,v)\leq 1\}$ and $\mathcal{P}_{\geq 2}=\{\{u,v\}\subseteq V(G)|\lambda_G(u,v)\geq 2\}$. Let $\vec{G}$ be an orientation of $G$. We will show that $tac(\vec{G}_1)\geq \frac{2}{3}tac(\vec{G})$.
\begin{Claim}\label{p1}
$\sum_{\{u,v\}\in \mathcal{P}_{\leq 1}}(\lambda_{\vec{G_1}}(u,v)+\lambda_{\vec{G_1}}(v,u))\geq \sum_{\{u,v\}\in \mathcal{P}_{\leq 1}}(\lambda_{\vec{G}}(u,v)+\lambda_{\vec{G}}(v,u)).$
\end{Claim}
\begin{proof}
First observe that, as $\vec{H_i}$ is strongly connected for $i=1,\ldots,t$, for every $\{u,v\} \in \mathcal{P}_{\leq 1}$, if $v$ is reachable from $u$ in $\vec{G_0}$, then $v$ is also reachable from $u$ in $\vec{G_1}$. Now let $\vec{G'}$ be the orientation of $G$ which is obtained by giving all the edges in $E(H_i)$ the orientation they have in $\vec{H_i}$ for $i=1,\ldots,t$ and giving all remaining edges the orientation they have in $\vec{G}$. As $\vec{H_i}$ is strongly connected for $i=1,\ldots,t$, we obtain that $\min\{\lambda_{\vec{G'}}(u,v),1\}+\min\{\lambda_{\vec{G'}}(v,u),1\}=2$ for every $\{u,v\}\in \mathcal{P}_2$ and for every $\{u,v\} \in \mathcal{P}_1$, if $v$ is reachable from $u$ in $\vec{G_0}$, then $v$ is also reachable from $u$ in $\vec{G_1}$. By the definition of $\vec{G_0}$, this yields
\begin{align*}
\sum_{\{u,v\}\in \mathcal{P}_{\leq 1}}(\lambda_{\vec{G_1}}(u,v)+\lambda_{\vec{G_1}}(v,u))&\geq \sum_{\{u,v\}\in \mathcal{P}_{\leq 1}}(\lambda_{\vec{G_0}}(u,v)+\lambda_{\vec{G_0}}(v,u))\\
&= \sum_{\{u,v\}\in \mathcal{P}_{\leq 1}}(\min\{\lambda_{\vec{G_0}}(u,v),1\}+\min\{\lambda_{\vec{G_0}}(v,u),1\})\\
&\geq \sum_{\{u,v\}\in \mathcal{P}}(\min\{\lambda_{\vec{G_0}}(u,v),1\}+\min\{\lambda_{\vec{G_0}}(v,u),1\})-2|\mathcal{P}_{\geq 2}|\\
&\geq \sum_{\{u,v\}\in \mathcal{P}}(\min\{\lambda_{\vec{G'}}(u,v),1\}+\min\{\lambda_{\vec{G'}}(v,u),1\})-2|\mathcal{P}_{\geq 2}|\\
&=\sum_{\{u,v\}\in \mathcal{P}_{\leq 1}}(\min\{\lambda_{\vec{G'}}(u,v),1\}+\min\{\lambda_{\vec{G'}}(v,u),1\})\\
&\geq \sum_{\{u,v\}\in \mathcal{P}_{\leq 1}}(\min\{\lambda_{\vec{G}}(u,v),1\}+\min\{\lambda_{\vec{G}}(v,u),1\})\\
&=\sum_{\{u,v\}\in \mathcal{P}_{\leq 1}}(\lambda_{\vec{G}}(u,v)+\lambda_{\vec{G}}(v,u)).
\end{align*}
\end{proof}
\begin{Claim}\label{p2}
$\sum_{\{u,v\}\in \mathcal{P}_{\geq 2}}(\lambda_{\vec{G_1}}(u,v)+\lambda_{\vec{G_1}}(v,u))\geq \frac{2}{3}\sum_{\{u,v\}\in \mathcal{P}_{\geq 2}}(\lambda_{\vec{G}}(u,v)+\lambda_{\vec{G}}(v,u)).$
\end{Claim}
\begin{proof}
Let $\{u,v\}\in \mathcal{P}_{\geq 2}$. By symmetry, we may suppose that $\{u,v\}\subseteq S_1$. As $\vec{H_1}$ is well-balanced, we obtain \begin{align*}\lambda_{\vec{G_1}}(u,v)+\lambda_{\vec{G_1}}(v,u)&\geq \lambda_{\vec{H_1}}(u,v)+\lambda_{\vec{H_1}}(v,u)\\&\geq 2\lfloor\frac{1}{2}\lambda_{H_1}(u,v)\rfloor\\&=2\lfloor\frac{1}{2}\lambda_{G}(u,v)\rfloor.\end{align*}

If $\lambda_G(u,v)=2$, we obtain $\lambda_{\vec{G_1}}(u,v)+\lambda_{\vec{G_1}}(v,u)\geq 2(\frac{1}{2}\lambda_{G}(u,v))=\lambda_{G}(u,v)\geq \lambda_{\vec{G}}(u,v)+\lambda_{\vec{G}}(v,u)$.

Otherwise, by $\lambda_G(u,v)\geq 2$, we have $\lambda_{\vec{G_1}}(u,v)+\lambda_{\vec{G_1}}(v,u)\geq \lambda_{G}(u,v)-1\geq \frac{2}{3}\lambda_G(u,v)\geq \frac{2}{3}(\lambda_{\vec{G}}(u,v)+\lambda_{\vec{G}}(v,u)).$

Hence the statement follows.
\end{proof}
By Claims \ref{p1} and \ref{p2}, we obtain 
\begin{align*}
tac(\vec{G_1})&=\sum_{\{u,v\}\in \mathcal{P}}(\lambda_{\vec{G_1}}(u,v)+\lambda_{\vec{G_1}}(v,u))\\
&=\sum_{\{u,v\}\in \mathcal{P}_{\leq 1}}(\lambda_{\vec{G_1}}(u,v)+\lambda_{\vec{G_1}}(v,u))+\sum_{\{u,v\}\in \mathcal{P}_{\geq 2}}(\lambda_{\vec{G_1}}(u,v)+\lambda_{\vec{G_1}}(v,u))\\
&\geq \sum_{\{u,v\}\in \mathcal{P}_{\leq 1}}(\lambda_{\vec{G}}(u,v)+\lambda_{\vec{G}}(v,u))+\frac{2}{3}\sum_{\{u,v\}\in \mathcal{P}_{\geq 2}}(\lambda_{\vec{G}}(u,v)+\lambda_{\vec{G}}(v,u))\\
&\geq \frac{2}{3}(\sum_{\{u,v\}\in \mathcal{P}_{\leq 1}}(\lambda_{\vec{G}}(u,v)+\lambda_{\vec{G}}(v,u))+\sum_{\{u,v\}\in \mathcal{P}_{\geq 2}}(\lambda_{\vec{G}}(u,v)+\lambda_{\vec{G}}(v,u))\\
&=\frac{2}{3}\sum_{\{u,v\}\in \mathcal{P}}(\lambda_{\vec{G}}(u,v)+\lambda_{\vec{G}}(v,u))\\&=\frac{2}{3}tac(\vec{G}).
\end{align*}
This finishes the proof.
\end{proof}
\section{Conclusion}\label{conc}
We have shown that it is NP-complete to find an orientation that maximizes $tac(\vec{G})$ over all orientations $\vec{G}$ of a given graph $G$, but that a $\frac{2}{3}$-approximation algorithm is available.
\medskip

A natural question is whether this approximation ratio can be improved. To this end, consider the graph $G$ which is obtained from a path $v_1\ldots v_t$ by tripling every edge. Further, let $\vec{G_1}$ be the orientation of $G$ in which two edges are oriented from $v_i$ to $v_{i+1}$ and one edge is oriented from $v_{i+1}$  to $v_i$ if $i$ is odd and in which one edge is oriented from $v_i$ to $v_{i+1}$ and two edges are oriented from $v_{i+1}$  to $v_i$ if $i$ is even for $i=1,\ldots,t-1$. Finally, let $\vec{G_2}$ be the orientation of $G$ in which all three edges are oriented from $v_i$ to $v_{i+1}$ for $i=1,\ldots,t-1$. For an illustration, see Figure \ref{path}.

\begin{figure}[h]\begin{center}
  \includegraphics[width=0.8\textwidth]{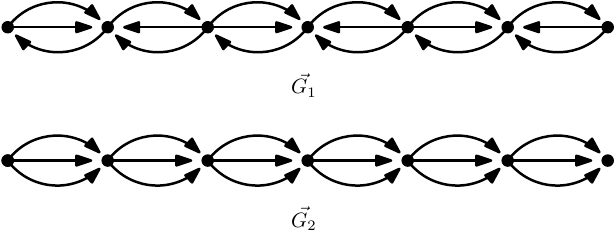} 
  \caption{An illustration of the construction for $t=7$.}\label{path}
\end{center}
\end{figure}

 It is easy to see that $G$ is 2-edge-connected and $\vec{G_1}$ is well-balanced. Hence $\vec{G_1}$ is a possible output of the algorithm in Section \ref{appro} when applied to $G$. Next observe that $tac(\vec{G_1})=2{t \choose 2}+(t-1)$ and $tac(\vec{G_2})=3{t \choose 2}$. As $\lim_{t \rightarrow \infty}\frac{2{t \choose 2}+(t-1)}{3{t \choose 2}}=\frac{2}{3}$, we obtain that the approximation guarantee of the algorithm in Section \ref{appro} cannot be improved. Hence an improvement on the approximation ratio would require a distinct algorithm. As a first step, it would be interesting to understand whether the maximization problem is APX-hard.
\medskip

Further, it would be interesting to understand how closely orientations maximizing total arc-connectivity and orientations maximizing classical connectivity notions are related. In \cite{cdgmo}, Casablanca et al. conjectured that every orientation $\vec{G}$ of a 2-edge-connected graph $G$ that maximizes $tvc(\vec{G})$ is strongly connected. The analogous statement is not true for total arc-connectivity as the digraph $\vec{G_2}$ in Figure \ref{path} shows. However, the following statement seems plausible.

\begin{Conjecture}
Every 2-edge-connected graph $G$ has an orientation $\vec{G}$ that maximizes $tac(\vec{G})$ and is strongly connected.
\end{Conjecture}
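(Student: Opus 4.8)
The plan is to start from an arbitrary orientation $\vec{G}$ of $G$ maximizing $tac(\vec{G})$ and, as long as $\vec{G}$ is not strongly connected, to apply a single arc reversal that does not decrease $tac$ while strictly decreasing the number of strongly connected components of $\vec{G}$; since this number is a positive integer, finitely many steps then produce a strongly connected maximizer. Concretely, I would take $\vec{G}$ to maximize $tac$ and, among all maximizers, to have the fewest strongly connected components, and assume for contradiction that $\vec{G}$ is not strong. Passing to the condensation, pick a vertex set $S$ that is inclusionwise minimal among the nonempty sets with $\delta^-_{\vec{G}}(S)=\emptyset$. Minimality forces the subdigraph $\vec{G}[S]$ to be a single source strongly connected component (any source component of $\vec{G}[S]$ receives no arc from $S$ and none from outside $S$, hence is again such a set). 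Since $G$ is $2$-edge-connected and no arc enters $S$, we get $d^+_{\vec{G}}(S)=d_G(S)\ge 2$.

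The core of the argument is the claim that some arc $uv\in\delta^+_{\vec{G}}(S)$ (so $u\in S$, $v\notin S$) can be reversed to yield an orientation $\vec{G}'$ with $tac(\vec{G}')\ge tac(\vec{G})$ and with strictly fewer strongly connected components. The count of components is the easier half: after reversing $uv$ the set $S$ gains the entering arc $vu$ yet still has an outgoing arc (as $d^+_{\vec{G}}(S)\ge 2$), so $S$ ceases to be a source component, and for a suitable choice of $uv$ -- for instance an arc on a directed path in $\vec{G}$ from $S$ to a sink component of the condensation, reversed in its entirety if a single arc does not suffice -- the component containing $S$ strictly grows. The genuinely delicate point, and what I expect to be the main obstacle, is $tac(\vec{G}')\ge tac(\vec{G})$: reversing one arc moves each value $\lambda(x,y)$ by at most one, but in both directions, so one must match every pair that loses a unit of connectivity with a pair (possibly the same pair in the opposite direction) that gains one. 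This is exactly where the structure $\delta^-_{\vec{G}}(S)=\emptyset$ should be exploited. The key observation to make precise is an uncrossing one, relying on submodularity of $d^+_{\vec{G}}(\cdot)$ and on Proposition~\ref{summe}: because no arc enters $S$, for any pair $\{x,y\}$ disjoint from $S$ a minimum $x\bar{y}$-cut can be chosen disjoint from $S$, and on such cuts the reversal of $uv$ never decreases the out-degree; and for a pair meeting $S$, a unit lost in the direction ``out of $S$'' is regained in the direction ``into $S$'', since reversing $uv$ converts an out-arc of the relevant cut into an in-arc and vice versa.

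If this claim is established, iterating the reversal proves the conjecture, and in fact shows that the maximum of $tac$ over strong orientations of a $2$-edge-connected graph equals the maximum over all orientations. I would first test the uncrossing step on the tripled-path example of Figure~\ref{path}, where the non-strong maximizer $\vec{G_2}$ is transformed, without changing $tac$, into the strong orientation using one forward and two backward copies on each edge; the difficulty in the general case is that minimum cuts can have arbitrary shape relative to $S$, so it may be necessary to reverse a carefully chosen directed path rather than a single arc, or to first replace $\vec{G}$ by a maximizer that is additionally ``as well-balanced as possible'' in the sense of Theorem~\ref{nwfort} on the $2$-edge-connected pieces arising in the analysis, so that the compensating pairs are easier to locate.
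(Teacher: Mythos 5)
This statement appears in the paper only as a conjecture; the paper provides no proof of it, so there is no argument of the author's to compare yours against. What follows assesses your sketch on its own terms.

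The outer framework (take a $tac$-maximizer $\vec G$ with fewest strongly connected components, pick a minimal $S$ with $\delta_{\vec G}^-(S)=\emptyset$, reverse an arc in $\delta_{\vec G}^+(S)$, iterate) is reasonable, and the uncrossing step you invoke is in fact sound for the pairs it addresses: if $\{x,y\}\cap S=\emptyset$ one may drop $S$ from any $x\bar y$-set, and if $\{x,y\}\subseteq S$ one may add $V(G)-S$, and in either case $\delta_{\vec G}^-(S)=\emptyset$ together with the fact that the reversed arc has exactly one endvertex in $S$ shows that no such pair loses connectivity. But the pairs with exactly one vertex in $S$ are where the real difficulty sits, and your ``a unit lost out of $S$ is regained into $S$'' heuristic is not an argument: for $y\notin S$, $x\in S$, the compensating unit in $\lambda_{\vec G'}(y,x)$ materializes only if $y$ can already reach the tail $v$ of the reversed arc, which need not hold. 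Concretely, take the paper's own orientation $\vec{G_2}$ from Figure~\ref{path} with $t=3$: it is a $tac$-maximizer of the tripled path (since $tac(\vec{G_2})=9=\sum_{\{u,v\}}\lambda_G(u,v)$, an upper bound for every orientation), its unique source component is $S=\{v_1\}$, and reversing one copy of $v_1v_2$ drops $tac$ to $8$ because $\lambda(v_1,v_3)$ falls from $3$ to $2$ while $\lambda(v_3,v_1)$ stays $0$. So the single-arc version of your key lemma is simply false for an arbitrary non-strong maximizer. You correctly anticipate that reversing a directed path from $S$ to a sink component may be needed instead, and that rescues this example, but you give no argument that a $tac$-preserving reversible path always exists, nor do the submodular considerations in your sketch obviously yield one. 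You flag this yourself as ``the main obstacle,'' which is an accurate self-assessment: what you have is a plausible proof strategy with its central lemma unproved (and its naive variant refuted), so the conjecture should still be regarded as open.
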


Even the following much stronger statement could be true.

\begin{Conjecture}
Every graph $G$ has an orientation $\vec{G}$ that maximizes $tac(\vec{G})$ and is well-balanced.
\end{Conjecture}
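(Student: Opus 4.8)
The plan is to reduce the statement to $2$-edge-connected graphs and then to handle that case by a local exchange argument.

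For the reduction, let $G$ be arbitrary with blocks $(S_1,\dots,S_t)$ as in Proposition \ref{2ec}. A short argument from Proposition \ref{2ec} shows that distinct blocks are joined by at most one edge of $G$ and that, after contracting each $S_i$, these inter-block edges form a forest; hence every directed path between two vertices of the same $S_i$ uses only edges of $G[S_i]$, so $\lambda_G(u,v)=\lambda_{G[S_i]}(u,v)$ and $\lambda_{\vec{G}}(u,v)=\lambda_{\vec{G}[S_i]}(u,v)$ for all $u,v\in S_i$ and every orientation $\vec{G}$, while $\lambda_G(u,v)\le 1$ whenever $u,v$ lie in different blocks. Thus $tac(\vec{G})$ equals $\sum_i tac(\vec{G}[S_i])$ plus the number of ordered pairs $(u,v)$ in distinct blocks with $v$ reachable from $u$; provided every block is oriented strongly, the first sum depends only on the block orientations and the second only on the orientations of the inter-block edges. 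Orienting each $G[S_i]$ by a $tac$-maximizing well-balanced orientation --- which is strongly connected by Theorem \ref{rob}, and whose existence is exactly the $2$-edge-connected case of the conjecture --- and choosing the inter-block edges so as to maximize the number of reachable ordered cross-block pairs (possible by Proposition \ref{reach}; on a forest of bridges this is the theorem of \cite{hsy}) simultaneously maximizes both terms, hence $tac$. The result is well-balanced: for $u,v$ in a common block this is inherited, while for $u,v$ in different blocks the requirement $\min\{\lambda_{\vec{G}}(u,v),\lambda_{\vec{G}}(v,u)\}\ge\lfloor\frac{1}{2}\lambda_G(u,v)\rfloor$ is vacuous since the right-hand side is $0$. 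So it suffices to prove the conjecture for $2$-edge-connected graphs.

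For $2$-edge-connected $G$, I would begin with any orientation $\vec{G}$ maximizing $tac$ and attempt to deform it into a well-balanced one without decreasing $tac$. The elementary operation is the reversal of a directed $uv$-path, which changes out-degrees only at $u$ and $v$ and changes the quantities $\lambda_{\vec{G}}(x,y)$ in a controllable way. Among all $tac$-maximizing orientations I would select one minimizing the potential $\Phi(\vec{G})$ obtained by summing, over all pairs $\{u,v\}$, the shortfalls of $\lambda_{\vec{G}}(u,v)$ and of $\lambda_{\vec{G}}(v,u)$ below $\lfloor\frac{1}{2}\lambda_G(u,v)\rfloor$. If $\Phi(\vec{G})>0$, pick a pair with $\lambda_{\vec{G}}(u,v)<\lfloor\frac{1}{2}\lambda_G(u,v)\rfloor$ (so $\lambda_{\vec{G}}(v,u)$ is comparatively large); the task is to exhibit a path reversal that raises $\lambda_{\vec{G}}(u,v)$, preserves $tac$-maximality, and does not increase $\Phi$, contradicting minimality. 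A second possible route is an induction through splitting off: by Mader's theorem one can split off a pair of edges at a low-degree vertex preserving all local edge-connectivities, and one would want a strengthening of this that also preserves the property ``has a $tac$-maximizing well-balanced orientation'', after which the orientation is lifted back.

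The main obstacle is precisely this step for $2$-edge-connected graphs. Because $tac$ is a sum over all pairs and reacts to many cuts at once, a path reversal (or a splitting) that repairs one violated pair may introduce other violations, or may depress some $\lambda_{\vec{G}}(u,v)+\lambda_{\vec{G}}(v,u)$ strictly below $\lambda_G(u,v)$ and thereby destroy $tac$-maximality; there is no evident uncrossing or submodularity that keeps all pairs under control simultaneously. The statement is also strictly stronger than the preceding conjecture, since a well-balanced orientation of a $2$-edge-connected graph is strong. A complete proof thus seems to require either a structural description of which cuts can be tight in a $tac$-optimal orientation, or a Nash-Williams-style induction down a laminar family of tight cuts combined with the splitting-off machinery. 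As a first step I would verify the potential argument for graphs of high edge-connectivity and for graphs whose tight cuts already form a laminar family, where the crossing obstructions disappear, before turning to the general case.
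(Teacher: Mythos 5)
This statement is one of the paper's open conjectures, stated without proof in the concluding section; there is no proof of it in the paper to compare your attempt against.

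On the merits of your proposal: the reduction to the $2$-edge-connected case is essentially sound. From Proposition~\ref{2ec} one indeed gets that the contracted block graph is a forest of bridges, that $\lambda_{\vec{G}}(u,v)=\lambda_{\vec{G}[S_i]}(u,v)$ when $u,v$ lie in a common block, and that once each block is oriented strongly the intra-block contribution to $tac$ depends only on the block orientations while the cross-block contribution depends only on the orientations of the bridges; since the two sets of edges are disjoint, the two contributions can be maximized independently, and the well-balancedness condition is vacuous across blocks. (Two small nits: the fact that a well-balanced orientation of a $2$-edge-connected graph is strong follows directly from the definition of well-balanced, not from Theorem~\ref{rob}; and the cross-block step needs a \emph{weighted} version of the tree result from~\cite{hsy}, each contracted node carrying weight $|S_i|$, rather than Proposition~\ref{reach} as stated.) None of this is in the paper, but it is a reasonable and likely correct observation that the conjecture reduces to $2$-edge-connected graphs.

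However, you have not proved the conjecture. For the $2$-edge-connected case you offer two sketches --- a potential-minimizing path-reversal argument and a splitting-off induction --- and you candidly identify the obstruction yourself: a single reversal or splitting repairs one violated pair but may create others or destroy $tac$-optimality, and there is no obvious uncrossing or submodularity controlling all pairs at once. That obstruction is exactly why the statement is a conjecture and not a theorem. So what you have is a plausible reduction to the hard core of the problem together with a clear statement of why that core is hard, which is honest and useful, but it is a research plan rather than a proof.
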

\medskip

Finally, one could wonder if an analogue of Theorem \ref{mainapp} exists for vertex-connectivity.
\begin{Question}
Is there a polynomial time algorithm that for a given graph $G$ approximates the maximum of $tvc(\vec{G})$ over all orientations $\vec{G}$ of $G$ within a constant factor? 
\end{Question}

\end{document}